\newtheorem*{notn}{Notation}
\newtheorem{thm}{Theorem}[section]
\newtheorem{lem}[thm]{Lemma}
\newtheorem{cor}[thm]{Corollary}
\newtheorem{problem}[thm]{Problem}
\newtheorem{Def}[thm]{Definition}
\newtheorem{prop}[thm]{Proposition}
\newtheorem{rem}[thm]{Remark}
\newtheorem{ex}[thm]{Example}
\newcommand{\bdfn}{\begin{Def} \rm}
\newcommand{\edfn}{\end{Def}}
\newcommand{\tFAE}{the following are equivalent}
\newcommand{\ra}{\rightarrow}
\newcommand{\es}{\emptyset}
\newcommand{\ci}{\subseteq}
\newcommand{\al}{\alpha}
\newcommand{\de}{\delta}
\newcommand{\e}{\varepsilon}
\newcommand{\la}{\lambda}
\newcommand{\si}{\sigma}
\newcommand{\Ga}{\Gamma}
\newcommand{\mb}{\mathbb}
\newcommand{\mc}{\mathcal}
\newcommand{\sm}{\setminus}
\newcommand{\iy}{\infty}
\newcommand{\beqa}{\begin{eqnarray*}}
\newcommand{\eeqa}{\end{eqnarray*}}
\newcounter{cnt1}
\newcounter{cnt2}
\newcounter{cnt3}
\newcounter{cnt4}
\newcommand{\blr}{\begin{list}{$($\roman{cnt1}$)$} {\usecounter{cnt1}
 \setlength{\topsep}{0pt} \setlength{\itemsep}{0pt}}}
\newcommand{\blR}{\begin{list}{\Roman{cnt4}.\ } {\usecounter{cnt4}
 \setlength{\topsep}{0pt} \setlength{\itemsep}{0pt}}}
\newcommand{\bla}{\begin{list}{$(\alph{cnt2})$} {\usecounter{cnt2}
 \setlength{\topsep}{0pt} \setlength{\itemsep}{0pt}}}
\newcommand{\bln}{\begin{list}{$($\arabic{cnt3}$)$} {\usecounter{cnt3}
 \setlength{\topsep}{0pt} \setlength{\itemsep}{0pt}}}
\newcommand{\el}{\end{list}}
\begin{document}

\title[Various notions of best approximation property]{Various notions of best approximation property in spaces of Bochner integrable functions}

\author[Paul]{Tanmoy Paul}
\address[Tanmoy Paul]{Department of Mathematics,
 Indian Institute of Technology Hyderabad,
 Kandi Campus, Sangareddy, Telangana 502285, India\\
\emph{E-mail~:} {\tt tanmoy@iith.ac.in}}

\subjclass[2010]{Primary 46B20, 41A50 ,46E40; Secondary 46E15.}

\keywords{$L_p(I,X)$, proximinality, strong proximinality, ball proximinality, 
upper Hausdorff semi-continuity, $3.2.I.P.$, $1\frac{1}{2}$ ball property.}

\thanks{The research was supported by DST-SERB, India. Award No. MA/2013-14/003/DST/TPaul/0104.}

\begin{abstract}
We derive that for a separable proximinal subspace $Y$ of $X$, $Y$ is 
strongly proximinal (strongly ball proximinal) if and only if for $1\leq p< 
\iy$, $L_p(I,Y)$ is 
strongly proximinal (strongly ball proximinal) in $L_p(I,X)$. Case for $p=\iy$ follows from stronger assumption on $Y$ in $X$ (uniform proximinality). It is observed that for a separable proximinal subspace $Y$ in $X$, $Y$ is ball proximinal in $X$ if and only if $L_p(I,Y)$ is ball proximinal in $L_p(I,X)$ for $1\leq p\leq\iy$. Our observations also include the fact that for any (strongly) proximinal subspace $Y$ of $X$, if every separable subspace of $Y$ is ball (strongly) proximinal in $X$ then $L_p(I,Y)$ is ball (strongly) proximinal in $L_p(I,X)$ for $1\leq p<\iy$.
We introduce the notion of uniform proximinality of a closed convex set in a Banach 
space, which is wrongly defined in \cite{LZ}. Several examples are given having this property, viz. any $U$-subspace of a Banach space, closed unit ball $B_X$ of a space with $3.2.I.P$, closed unit ball of any M-ideal of a space with $3.2.I.P.$ are uniformly proximinal. A new class of examples are given having this property.
\end{abstract}

\maketitle

\section{Preliminaries and Definitions}\label{S10}

Let $X$ be a Banach space and $C$ be a closed convex 
subset of $X$. For $x\in X$, 
let $d(x,C)=\inf_{z\in C}\|x-z\|$ and 
$P_C(x)=\{z\in
C:\|x-z\|=d(x,C)\}$. The set valued mapping $P_C:X\to 2^C$ is called the metric 
projection of $C$ and the points in $P_C(x)$ are called the best approximation from $x$ in $C$.
We call the subset $C$ {\it proximinal} (or it has best approximation property) if for every point $x\in X\sm
C$, $P_C(x)\neq\es$. 

Let $(\Omega,\mc{M},\mu)$ be a finite measure space. For a 
Banach space $X$ consider 
the Banach space of Bochner $p$-integrable (essentially bounded for $p=\iy$) 
functions on $\Omega$ with values in $X$, endowed with the usual $p$-norm viz. $L_p(\Omega,X)$. Let us recall  
any such function is essentially a strongly measurable function, separably valued and if $(s_n)$ is a sequence of simple functions such that $s_n(t)\to f(t)$ a.e. then $\lim_n\int_I \|s_n(t)\|^pdm(t)=\int_I\|f(t)\|^pdm(t)$. 
In \cite{Kh, DK, LZ, Men} the authors discussed for a finite measure space  how often the property of best approximation of 
$Y$ in $X$ is stable under the spaces of functions $L_p(\Omega,Y)$ in 
$L_p(\Omega,X)$. Let us recall the following Theorem in this context.

\begin{thm}\label{T1}
Let $Y$ be a subspace of $X$ and $f\in L_p(\Omega,X)$ then, \bla
\item \cite[Theorem~5]{WL}$d(f,L_p(\Omega,Y))=\|d(f(.),Y)\|_p$ for $1\leq p\leq\iy$.
\item \cite[Theorem~3.4]{Men}For a separable subspace $Y$ of $X$, $L_p(\Omega,Y)$ is proximinal in $L_p(\Omega,X)$ if and only if $Y$ is proximinal in $X$, for $1\leq p\leq\iy$. 
\item \cite[Corollary~2]{WL}$f\in P_{L_p(\Omega,Y)}(g)$ if and only if $f(t)\in 
P_Y(g(t))$ a.e. for $1\leq p<\iy$.
\item \cite[Proposition~2.5]{Men}$L_\iy(\Omega,Y)$ is proximinal in $L_\iy(\Omega,X)$ if 
and only if for $f\in L_\iy(\Omega,X)$ there exists $g\in L_\iy(\Omega,Y)$ such that 
$f(t)\in P_Y(g(t))$ a.e.
\el
\end{thm}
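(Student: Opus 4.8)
The plan is to establish part (a) first and then read off the rest. \emph{Part (a).} The inequality $d(f,L_p(\Omega,Y))\geq\|d(f(\cdot),Y)\|_p$ is immediate: for every $g\in L_p(\Omega,Y)$ one has $\|f(t)-g(t)\|\geq d(f(t),Y)$ a.e., and one integrates (or takes essential suprema when $p=\iy$); here $t\mapsto d(f(t),Y)$ is measurable since $f$ is strongly measurable and $x\mapsto d(x,Y)$ is $1$-Lipschitz. For the reverse inequality I would fix $\e>0$, use strong measurability to approximate $f$ uniformly within $\e$ by a countably-valued measurable function $\tilde f=\sum_n x_n\chi_{A_n}$, pick $y_n\in Y$ with $\|x_n-y_n\|<d(x_n,Y)+\e$, and set $g=\sum_n y_n\chi_{A_n}$. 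Then $\|f(t)-g(t)\|\leq d(f(t),Y)+3\e$ a.e., while $\|g(t)\|\leq 2\|f(t)\|+3\e$ puts $g$ in $L_p(\Omega,Y)$ because $\mu$ is finite; letting $\e\to 0$ gives the claim.

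\emph{Part (c).} If $f(t)\in P_Y(g(t))$ a.e., then $\|g(t)-f(t)\|=d(g(t),Y)$ a.e., so by (a) $\|g-f\|_p=\|d(g(\cdot),Y)\|_p=d(g,L_p(\Omega,Y))$ and $f$ is a best approximation. Conversely, if $f\in P_{L_p(\Omega,Y)}(g)$, then again by (a) the nonnegative functions $\|g(\cdot)-f(\cdot)\|$ and $d(g(\cdot),Y)$ have the same $L_p$-norm while the former dominates the latter pointwise; for $p<\iy$ the strict monotonicity of $r\mapsto r^p$ forces a.e. equality, i.e. $f(t)\in P_Y(g(t))$ a.e.

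\emph{Part (b).} For the forward direction I would test against constants: if $f\equiv x$ and $h\in P_{L_p(\Omega,Y)}(f)$, then by (a) (and the pointwise argument of (c) when $p<\iy$, directly when $p=\iy$ since $d(f(t),Y)\equiv d(x,Y)$) one gets $\|x-h(t)\|=d(x,Y)$ a.e., so $P_Y(x)\neq\es$. For the converse assume $Y$ proximinal and separable and consider the set-valued map $\Phi(t)=P_Y(f(t))$: it has nonempty closed values in the separable space $Y$, and its graph is the zero set of the Carath\'eodory map $(t,y)\mapsto\|f(t)-y\|-d(f(t),Y)$, so $\Phi$ is measurable and the Kuratowski--Ryll-Nardzewski selection theorem yields a strongly measurable selection $g$ with $g(t)\in P_Y(f(t))$ a.e. Since $\|g(t)\|\leq 2\|f(t)\|$, we have $g\in L_p(\Omega,Y)$ for all $1\leq p\leq\iy$, and by (a) it is a best approximation to $f$.

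\emph{Part (d)} is the $p=\iy$ counterpart of (b)--(c) with separability dropped. The pointwise characterization of (c) genuinely fails for $p=\iy$ (equality of essential suprema does not force pointwise equality), so the correct statement is the existence of a pointwise best approximation $g\in L_\iy(\Omega,Y)$ with $g(t)\in P_Y(f(t))$ a.e.; the ``if'' direction is immediate from (a), since then $\|f(t)-g(t)\|=d(f(t),Y)\leq\|d(f(\cdot),Y)\|_\iy=d(f,L_\iy(\Omega,Y))$ a.e., and the ``only if'' direction is the delicate one where the real content lies. I expect the main obstacle throughout to be the selection step in the converse of (b): one must guarantee not merely a set-theoretic selection from the metric projection but a \emph{strongly measurable} one, which is exactly where separability of $Y$ is used, and whose failure for general $Y$ at $p=\iy$ is precisely what forces the weaker formulation (d).
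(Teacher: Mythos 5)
The paper itself gives no proof of Theorem~\ref{T1}: all four parts are quoted from Light \cite{WL} and Mendoza \cite{Men}, so your proposal is by construction a self-contained reconstruction rather than a parallel of an argument in the text. Your parts (a), (c) and (b) are essentially correct and are the standard arguments: the countably-valued uniform approximation for the upper bound in (a), the rigidity argument for (c) (the nonnegative function $\|g(\cdot)-f(\cdot)\|^p-d(g(\cdot),Y)^p$ has integral zero, hence vanishes a.e.), and the constant-function test for the forward direction of (b). Two caveats. First, in the converse of (b) the appeal to Kuratowski--Ryll-Nardzewski is slightly misplaced: what you actually verify is that the graph of $\Phi(t)=P_Y(f(t))=\{y\in Y:\|f(t)-y\|-d(f(t),Y)\leq 0\}$ lies in the product $\sigma$-algebra, and graph measurability alone does not yield the weak measurability that KRN requires; the clean route is the von Neumann/Aumann-type selection theorem for a complete finite measure space (exactly Theorem~\ref{T2}, i.e.\ \cite[Corollary~5.5.8]{SS}), applied after completing $\mu$, with separability of $Y$ making $Y$ Polish and making the measurable selection strongly measurable. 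Second, part (d) as printed has the roles reversed: ``$f(t)\in P_Y(g(t))$'' should read $g(t)\in P_Y(f(t))$, which is what you implicitly (and correctly) proved statements about.

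The genuine gap is in part (d). You prove only the easy implication (existence of a pointwise best approximation for every $f$ implies proximinality of $L_\iy(\Omega,Y)$) and explicitly set aside the converse as ``the delicate one where the real content lies''; but that converse is the entire content of \cite[Proposition~2.5]{Men}, and without it (d) is not established. It is not a consequence of (a): equality of essential suprema carries no pointwise information, as you yourself note. The missing idea is a normalization and decomposition trick. Put $\varphi(t)=d(f(t),Y)$; on $\{\varphi=0\}$ one has $f(t)\in Y$ and may take $g=f$ there. On $A_n=\{2^{-n-1}<\varphi\leq 2^{-n}\}$ (discarding those of measure zero) the function $F_n=(f/\varphi)\chi_{A_n}$ belongs to $L_\iy(\Omega,X)$ and satisfies $d(F_n(t),Y)=1$ a.e.\ on $A_n$, so $d(F_n,L_\iy(\Omega,Y))=1$; if $G_n$ is a best approximation to $F_n$, then $\|F_n(t)-G_n(t)\|\leq 1=d(F_n(t),Y)$ a.e.\ on $A_n$, i.e.\ $G_n(t)\in P_Y(F_n(t))$ there, and by positive homogeneity of $P_Y$ for subspaces $g_n(t)=\varphi(t)G_n(t)\in P_Y(f(t))$ a.e.\ on $A_n$, with $\|g_n(t)\|\leq\varphi(t)+\|f(t)\|\leq 2\|f\|_\iy$. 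Gluing $g=f\chi_{\{\varphi=0\}}+\sum_n g_n\chi_{A_n}$ gives the required pointwise best approximation in $L_\iy(\Omega,Y)$ (note no separability of $Y$ is needed here, consistent with the statement). Without this or an equivalent argument, your treatment of (d) records the difficulty but does not resolve it.
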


Suppose $I=[0,1]$, and $(I,\mc{B},m)$ stands for the complete 
Lebesgue measure space over the Borel $\sigma$-field $\mc{B}$. 
After Saidi's paper, \cite{Sa}, people find it is worth investigating about the proximinality of closed unit ball of a proximinal subspace. The authors in \cite{BLR} investigate the proximinality of $L_p(I,B_Y)$ in $L_p(I,X)$ if $B_Y$ is proximinal in $X$. Recall the following results from \cite[Pg~$12$]{BLR}.

\begin{thm}
Let $Y$ be a separable ball proximinal subspace of $X$. Then
\bla
\item $L_\iy(I,Y)$ is ball proximinal in $L_\iy(I,X)$.
\item $L_p(I,B_Y)$ is proximinal in $L_p(I,X)$.
 \el
\end{thm}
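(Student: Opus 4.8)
The plan is to treat parts (a) and (b) uniformly. Note first that $B_{L_\iy(I,Y)}=L_\iy(I,B_Y):=\{f\in L_\iy(I,X):f(t)\in B_Y\text{ a.e.}\}$, so (a) is exactly the statement that the closed convex set $L_\iy(I,B_Y)$ is proximinal in $L_\iy(I,X)$; thus (a) and (b) are the cases $p=\iy$ and $1\le p<\iy$ of a single assertion about $L_p(I,B_Y)$. I would prove this by establishing, for the closed bounded convex set $C=B_Y$, the convex-set analogue of Theorem~\ref{T1}(a) and (c): for $f\in L_p(I,X)$ one has $d\bigl(f,L_p(I,B_Y)\bigr)=\bigl\|\,d(f(\cdot),B_Y)\,\bigr\|_p$, and every strongly measurable $g$ with $g(t)\in P_{B_Y}(f(t))$ a.e.\ is a best approximation to $f$ in $L_p(I,B_Y)$.

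The easy half is immediate: since $d(\cdot,B_Y)$ is $1$-Lipschitz and $f$ is strongly measurable, $t\mapsto d(f(t),B_Y)$ is measurable, and as $0\in B_Y$ it is dominated by $\|f(\cdot)\|$, hence lies in $L_p$. For any $g\in L_p(I,B_Y)$ we have $\|f(t)-g(t)\|\ge d(f(t),B_Y)$ a.e., and integrating (or taking the essential supremum when $p=\iy$) gives $\|f-g\|_p\ge\|d(f(\cdot),B_Y)\|_p$. So it remains to exhibit one $g\in L_p(I,B_Y)$ attaining equality.

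The heart of the argument is a measurable selection of the metric projection. Ball proximinality of $Y$ says $P_{B_Y}(x)\ne\es$ for every $x\in X$, so the multifunction $\Phi(t)=P_{B_Y}(f(t))=\{y\in B_Y:\|f(t)-y\|=d(f(t),B_Y)\}$ has nonempty (closed, convex) values for a.e.\ $t$. The map $(t,y)\mapsto\|f(t)-y\|-d(f(t),B_Y)$ is measurable in $t$ for each fixed $y$ and continuous in $y$ for each fixed $t$, hence jointly $\mc M\otimes\mc B(B_Y)$-measurable; therefore the graph of $\Phi$ is $\mc M\otimes\mc B(B_Y)$-measurable. Since $(I,\mc M,m)$ is complete and finite and $B_Y$, being a closed subset of the separable Banach space $Y$, is a Polish space, the Aumann--von Neumann measurable selection theorem provides a measurable $g:I\to B_Y$ with $g(t)\in\Phi(t)$ a.e. Being $Y$-valued with $Y$ separable, $g$ is essentially separably valued, hence strongly measurable, and $\|g(t)\|\le 1$ gives $g\in L_p(I,B_Y)$ for every $1\le p\le\iy$. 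Finally $\|f(t)-g(t)\|=d(f(t),B_Y)$ a.e.: for $1\le p<\iy$ this yields $\|f-g\|_p=\|d(f(\cdot),B_Y)\|_p$, proving the distance formula and that $g\in P_{L_p(I,B_Y)}(f)$, i.e.\ (b); for $p=\iy$ it yields $\|f-g\|_\iy=\|d(f(\cdot),B_Y)\|_\iy$, which equals the lower bound from the easy half, so $g\in B_{L_\iy(I,Y)}$ is a best approximation, i.e.\ (a).

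The main obstacle is the selection step: one must check carefully that $t\mapsto P_{B_Y}(f(t))$ has measurable graph and a.e.\ nonempty values — this is exactly where separability and ball proximinality of $Y$ are used — and that the resulting selection is a genuine Bochner $p$-integrable function. The distance estimates and the pointwise-to-global passage are then routine. If one prefers to avoid an exact selection, the same conclusion follows from countably many $\e$-approximate measurable selections together with a limiting argument, but since $P_{B_Y}(f(t))$ is already nonempty the direct route above is cleaner.
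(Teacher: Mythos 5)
Your proof is correct, and it is worth noting that the paper itself does not prove this statement at all: it is recalled verbatim from \cite[Pg~12]{BLR}, so the relevant comparison is with the machinery the paper deploys for its own analogous results. Your key steps are sound: the identification $B_{L_\iy(I,Y)}=L_\iy(I,B_Y)$, the pointwise lower bound $\|f-g\|_p\geq\|d(f(\cdot),B_Y)\|_p$, and above all the selection argument -- $B_Y$ is Polish as a closed subset of the separable space $Y$, the Carath\'eodory map $(t,y)\mapsto\|f(t)-y\|-d(f(t),B_Y)$ is jointly measurable, so the graph of $t\mapsto P_{B_Y}(f(t))$ lies in the product $\si$-field and has nonempty values by ball proximinality, whence the selection theorem the paper quotes as Theorem~\ref{T2} yields a strongly measurable $g$ with $g(t)\in P_{B_Y}(f(t))$ a.e., which attains the distance simultaneously for all $1\leq p\leq\iy$. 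This is exactly the graph-measurability-plus-selection technique the paper uses in Lemma~\ref{L2} and Theorem~\ref{T14}, so in spirit your route matches the paper's toolkit (and, presumably, the original argument in \cite{BLR}). The one genuine divergence is in how the distance formula is obtained: the paper's own Theorem~\ref{P10} proves $d(f,B_{L_p(I,Y)})=\|d(f(\cdot),B_Y)\|_p$ by a three-step simple-function approximation that needs no proximinality hypothesis on $Y$, whereas you get the companion formula $d(f,L_p(I,B_Y))=\|d(f(\cdot),B_Y)\|_p$ as a byproduct of the selection, using ball proximinality; note these concern different sets, since $L_p(I,B_Y)\subsetneq B_{L_p(I,Y)}$ for $p<\iy$ (cf.\ Remark~\ref{R1}). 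Your approach buys a single uniform proof of (a) and (b) with an explicit best approximant; the paper's approximation route buys the distance formula in greater generality, which it then needs elsewhere (e.g.\ Theorem~\ref{T9}).
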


A latest article in this context is \cite{LZ}. It is also relevant to mention here that for a proximinal subspace 
$Y$, $L_1(I,Y)$ is not necessarily proximinal in $L_1(I,X)$ if $Y$ is not separable \cite{Men}. Light and Cheney also discussed about this best approximation property in the function spaces of type $L_p(\Omega, X)$ in \cite[Chapter~$2$]{LC}. Discussion in \cite[Chapter~$10$]{LC} is also relevant to the content of this paper. Our aim in this paper is to study various strengthenings of best approximation property, defined in Definition~\ref{D1}, of $L_p(I,Y)$ in $L_p(I,X)$. A concise presentation of this work is available in Section~\ref{S11}.

We now state few known Definitions from the literature which are relevant and also have impacts to the main theme of this paper. First recall from \cite{BLR,GI} the following stronger versions of proximinality.

\bdfn \label{D1} 
\bla
\item A closed convex subset $C$ of $X$ is {\it Strongly
proximinal} if it is proximinal and for a given $x\in X\sm C$ and $\e>0$ there
exists a $\de>0$ such that
$P_C(x,\de)\ci P_C(x)+\e B_X$,
where $P_C(x,\de)=\{z\in C:\|x-z\|\leq d(x,C)+\de\}$.

\item A subspace $Y$ is said to be {\it Ball proximinal} if $B_Y$ is proximinal in $X$.

\item A subspace $Y$ is said to be {\it Strongly ball proximinal} if $B_Y$ is strongly proximinal.
\el
\edfn

Readers can come across the articles \cite{BLR,DN,GI} for various examples of 
subspaces having these proximity properties. 

Recall the following notions for a set valued map.

\bdfn
If $T$ is
a topological space, then a set-valued map $\Ga: T\to 2^X$ is said to be
\bla
\item upper semi-continuous, abbreviated usc (resp. lower semi-continuous, abbreviated lsc) if for any neighborhood $\mc{U}$ of $\Ga(t)$ there exists a neighborhood $V$ of $t$ such that for all $s\in V, \Ga(s)\ci \mc{U}$ (if for $x\in \Ga(t)$ any sequence $t_n\to t$ there exists a sequence $x_n\in \Ga(t_n)$ converging to $x$).

\item upper Hausdorff semi-continuous, abbreviated uHsc. (resp. lower Hausdorff
semi-continuous, abbreviated lHsc) if for every $t\in T$ and every $\e> 0$,
there is a neighborhood $N$ of $t$, such that $\Ga(t)\ci\Ga(t_0)+ \e B_X$ (resp.
$\Ga(t_0)\ci\Ga(t)+ \e B_X$) for each $t\in N$. 

\item Hausdorff continuous, abbreviated H-continuous, if it is both uHsc and 
lHsc. 
\el
\edfn

From the definition of strong proximinality, it is clear that if $Y$ is a 
strongly proximinal subspace then $P_Y$ is uHsc. In general we have usc $\Rightarrow$ uHsc and lHsc $\Rightarrow$ lsc and if the above $\Ga$ is compact valued then usc $\Leftrightarrow$ uHsc and lHsc $\Leftrightarrow$ lsc.

The following notion was introduced by Yost in \cite{Y}. The author established some connections between the properties of best approximation and the following for a subspace of a Banach space.

\bdfn\label{D4}\cite{Y}
A subspace $Y$ of a Banach space $X$ is said to have the $1\frac{1}{2}$ -ball property if, whenever $\|x-y\|<r+s$ where $y\in Y$ and $x\in X$ with $B[x,r]\cap Y\neq\es$
then $B[x,r]\cap B[y,s]\cap Y\neq\es$.
\edfn

It is well known that a subspace $Y$ having $1\frac{1}{2}$ ball property is strongly proximinal. There are many function spaces and function algebras in the class of continuous functions having this property. 

Let us recall the following notion from \cite{Lim}.

\bdfn\label{D5}
A Banach space $X$ is said to have $3.2.I.P.$ if for for any three closed balls in $X$ which are pairwise intersecting actually intersect in $X$.
\edfn

Lindenstrauss monograph \cite{Lin} was the first where the above property was appeared for the first time, although the article \cite{Lim} by Lima encounters a systematic study of intersection properties of balls in Banach spaces.

\section{Main results and subsequent discussion}\label{S11}

The following problems are the origin of this investigation.

\begin{problem}\label{Q1}
 Let $Y$ be a subspace of $X$ which is strongly proximinal (ball proximinal). Is $L_p(\Omega,Y)$ strongly proximinal (ball proximinal) in $L_p(\Omega,X)$ for $1\leq p\leq\iy$ $?$
\end{problem}

The above problem on ball proximinality is asked in \cite[Pg 12]{BLR}.

\begin{problem}\label{Q2}
Let $f\in L_p(\Omega,X)$ and $Y$ be a subspace of $X$. What is the numerical value of $d(f,B_{L_p(\Omega,Y)})$ $?$
\end{problem}

\begin{problem}\label{Q3}
 Let $Y$ be a subspace of $X$ having $1\frac{1}{2}$ ball property and $(\Omega,\mc{M},\mu)$ be a finite measure space. Does $L_p(\Omega,Y)$ has $1\frac{1}{2}$ ball property in $L_p(\Omega,X)$ for $p=1,\iy$ $?$
\end{problem}

Remark~\ref{R3} states if $L_\iy(\Omega,Y)$ is strongly proximinal in $L_\iy(I,X)$ then $P_Y$ must be lHsc, on the other $Y$ would be strongly proximinal in $X$ for the same. Hence $P_Y$ is Hausdorff continuous if $L_\iy(\Omega,Y)$ is strongly proximinal in $L_\iy(I,X)$. Hence it raises the following question.

\begin{problem}\label{Q4}
Let $P_Y:X\ra 2^Y$ be Hausdorff continuous. Then what is the appropriate condition on $Y$ in $X$ which makes $L_\iy(\Omega,Y)$ strongly proximinal in  $L_\iy(\Omega,X)$ and vice versa $?$
\end{problem}

We considered these problems for the measure space $(I,\mc{B},m)$. The results in Section~\ref{S4} only require that the measure space has to be positive with total variation $1$, the other results can be derived for any finite measure space. The main results in this article are the following:
\begin{thm}[Theorem~\ref{T6},\ref{T12}]\label{M1}
For a separable proximinal subspace $Y$ of $X$, $Y$ is strongly proximinal (strongly ball proximinal)
in $X$ if and only if $L_p(I,Y)$ is strongly proximinal (strongly ball proximinal)
in $L_p(I,X)$, for $1\leq p<\iy$.
\end{thm}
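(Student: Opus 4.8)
The plan is to prove each equivalence by a "pointwise-to-global" transfer, using Theorem~\ref{T1} as the scaffold for the proximinality part and then upgrading to the strong (resp. strong ball) version by a careful $\e$-$\de$ argument that exploits separability of $Y$. First consider strong proximinality. The direction "$L_p(I,Y)$ strongly proximinal $\Rightarrow$ $Y$ strongly proximinal" should be the easy one: identify $X$ with the subspace of constant functions in $L_p(I,X)$ (here the total mass being $1$ makes the norms match), observe that $P_Y(x)$ is recovered from $P_{L_p(I,Y)}$ applied to the constant function $x\mathbf 1_I$ via Theorem~\ref{T1}(c), and check that the $\e$-$\de$ condition descends. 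For the substantive direction, suppose $Y$ is separable and strongly proximinal. Given $f\in L_p(I,X)$ and $\e>0$, I would first use Theorem~\ref{T1}(a),(c): a best approximation $g$ to $f$ exists with $g(t)\in P_Y(f(t))$ a.e., and any near-best approximation $h\in P_{L_p(I,Y)}(f,\de)$ satisfies, after passing to the relevant subset, that $\|f(t)-h(t)\|$ is close to $d(f(t),Y)$ on a large-measure set. The goal is to produce, from the pointwise strong-proximinality moduli $\de(f(t),\e)$, a single $\de>0$ that works globally.

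The key step — and the main obstacle — is the \emph{measurable uniformization of the strong-proximinality modulus}. Strong proximinality gives, for each $x$, a function $\e\mapsto\de(x,\e)$, but a priori nothing uniform in $x$, and $f(t)$ ranges over an uncountable set of values. Here separability of $Y$ (and hence separability of the range of $f$) is what saves us: I would cover the essential range of $f(\cdot)$ by countably many small balls, use a Lusin-type / Egorov-type argument to reduce to a set where $f$ is "almost simple," and on each piece replace $f(t)$ by one of countably many representative values, for which the corresponding $\de$'s can be chosen and then combined. One then splits $I$ into a good part (large measure, where the pointwise $P_Y(f(t),\de)\subseteq P_Y(f(t))+\e B_X$ inclusion is available and uniform) and a bad part of small measure, where one controls the $L_p$-contribution crudely by $\|f-h\|_p+\|d(f(\cdot),Y)\|_p$. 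Choosing the measure of the bad part small relative to $\e$ and $\de$ small relative to that, one assembles a global $g\in P_{L_p(I,Y)}(f)$ with $\|h-g\|_p<\e$ by setting $g(t)$ equal to a pointwise nearest point to $h(t)$ in $P_Y(f(t))$ on the good part. This last selection must itself be measurable, which again uses separability together with a measurable selection theorem (Kuratowski--Ryll-Nardzewski), applied to the closed-valued measurable multifunction $t\mapsto P_Y(f(t))$.

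For the strong ball proximinality statement, the plan is to run the same machinery with $B_Y$ in place of $Y$, but two extra ingredients are needed. First, one needs the analogue of Theorem~\ref{T1}(a),(c) for balls; since $Y$ is assumed separable and (strongly) ball proximinal, I would either invoke the known ball-proximinality transfer (Theorem~2 of the excerpt, from \cite{BLR}) for the proximinality skeleton, or establish directly that $h\in P_{L_p(I,B_Y)}(f)$ iff $h(t)\in P_{B_Y}(f(t))$ a.e., which follows by the same integral-identity argument as in \cite{WL} once the pointwise distance function $t\mapsto d(f(t),B_Y)$ is shown measurable. Second, the pointwise strong-proximinality modulus for $B_Y$ is now what gets uniformized; the measurable-selection and Egorov steps are identical. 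I expect no genuinely new difficulty here beyond bookkeeping, since $B_Y$ is still a separable closed convex set and all the tools (measurable multifunctions, Lusin/Egorov, countable dense subsets) apply verbatim. Throughout, the restriction $p<\iy$ is used exactly where the integral characterization of best approximations (Theorem~\ref{T1}(c)) and the dominated-convergence control of the bad-part contribution are invoked; the $p=\iy$ case genuinely fails this scheme, which is why it is treated separately under the stronger "uniform proximinality" hypothesis.
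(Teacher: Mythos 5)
Your easy direction (constant functions, recovering $P_Y(x)$ from $P_{L_p(I,Y)}$, averaging back into $Y$ or $B_Y$) is exactly how the paper argues in Theorem~\ref{T12} $(c)\Rightarrow(a)$, and your use of measurable selections for $t\mapsto P_Y(f(t))$ is in the spirit of Lemma~\ref{L2}/Theorem~\ref{T2}. But the substantive direction has a genuine gap at precisely the step you flag as the key one: the ``measurable uniformization of the modulus'' by covering the essential range of $f$ with balls around countably many representatives $x_i$ and importing the moduli $\de(x_i,\e)$. If $\|f(t)-x_i\|<\eta$ and $h(t)\in P_Y(f(t),\de')$, then indeed $h(t)\in P_Y(x_i,\de'+2\eta)\ci P_Y(x_i)+\e B_X$ once $\de'+2\eta\leq\de(x_i,\e)$; but this only places $h(t)$ near $P_Y(x_i)$, not near $P_Y(f(t))$, and to pass from $P_Y(x_i)$ to $P_Y(f(t))$ you would need $P_Y(x_i)\ci P_Y(f(t))+\e B_X$, i.e.\ a lower Hausdorff semicontinuity of $P_Y$ (or, equivalently, a locally uniform lower bound on the modulus $\de(\cdot,\e)$). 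Strong proximinality supplies neither: Brown's example \cite[Example~2.5]{AL}, which the paper uses before Example~\ref{E2}, gives a finite-dimensional (hence strongly proximinal) $Y$ whose metric projection is not lHsc, so the modulus can collapse on arbitrarily small balls around a point where it is positive. Since the point of the selection step is to choose $g(t)\in P_Y(f(t))$ (membership in $P_{L_p(I,Y)}(f)$ forces this by Theorem~\ref{T1}(c)), the ``combined $\de$'' on your good set is not justified, and the scheme as written does not close.

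The fix is to avoid any local uniformity of the modulus. The paper does this by working with the sequential form of strong proximinality: if $\|f-g_n\|_p\to d(f,L_p(I,Y))$, then $\int_I\bigl|\|f(t)-g_n(t)\|^p-d(f(t),Y)^p\bigr|\,dm\to 0$, so along a subsequence $\|f(t)-g_{n_k}(t)\|\to d(f(t),Y)$ a.e.; pointwise strong proximinality at each individual $f(t)$ (no uniformity needed) gives $d(g_{n_k}(t),P_Y(f(t)))\to 0$ a.e., dominated convergence upgrades this to $\|d(g_{n_k}(\cdot),P_Y(f(\cdot)))\|_p\to 0$, and the distance formula of Theorem~\ref{L3} (where all the measurable-selection work you delegate to Kuratowski--Ryll-Nardzewski actually lives) identifies this norm with $d(g_{n_k},P_{L_p(I,Y)}(f))$, a contradiction; the ball case is the same with $B_Y$ and Theorem~\ref{T13}. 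Alternatively, your good/bad-set split can be salvaged if Egorov is applied directly to the pointwise quantities attached to $f(t)$ itself (e.g.\ to $t\mapsto d(g_n(t),P_Y(f(t)))$, measurable by Lemma~\ref{L1}), rather than to moduli transported from nearby representative values; transporting moduli is exactly what fails, and it is also why the same statement breaks for $p=\iy$ (Example~\ref{E2}).
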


\begin{thm}[Theorem~\ref{T9}]\label{M2}
For a separable proximinal subspace $Y$ of $X$, $Y$ is ball proximinal 
in $X$ if and only if $L_p(I,Y)$ is ball 
proximinal in $L_p(I,X)$, for $1\leq 
p\leq \iy$.
\end{thm}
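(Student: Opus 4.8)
The plan is to prove the equivalence for each fixed $p$, disposing of $p=\iy$ first. When $p=\iy$, $\|g\|_\iy\le 1$ means $\|g(t)\|\le 1$ a.e., so $B_{L_\iy(I,Y)}=L_\iy(I,B_Y)$; hence ``$Y$ ball proximinal $\Rightarrow L_\iy(I,Y)$ ball proximinal'' is exactly the first assertion of the theorem of \cite{BLR} quoted above, and the converse is the constant-function argument described below (which for $p=\iy$ needs no convexity). So fix $1\le p<\iy$.

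The heart of the argument is a formula for $d(f,B_{L_p(I,Y)})$ that I would establish first (this also answers Problem~\ref{Q2}): for $f\in L_p(I,X)$,
\[
d\bigl(f,B_{L_p(I,Y)}\bigr)^p=\inf\Bigl\{\,\int_I d\bigl(f(t),s(t)B_Y\bigr)^p\,dm(t)\ :\ s\in L_p(I),\ s\ge 0,\ \|s\|_p\le 1\,\Bigr\}.
\]
``$\le$'' is immediate by taking $s(t)=\|g(t)\|$ for a competitor $g$. For ``$\ge$'' I would use that each scaled ball $s(t)B_Y$ is proximinal (from ball proximinality of $Y$, since $P_{rB_Y}(x)=rP_{B_Y}(x/r)$) together with separability of $Y$ to invoke the Kuratowski--Ryll-Nardzewski selection theorem on the measurable multifunction $t\mapsto P_{s(t)B_Y}(f(t))$, producing a strongly measurable $g$ with $g(t)\in P_{s(t)B_Y}(f(t))$; then $\|g\|_p\le\|s\|_p\le 1$ and $\|f-g\|_p^p$ equals the integral. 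In particular the infimum is attained \emph{iff} $d(f,B_{L_p(I,Y)})$ is, and an optimal budget $s^\ast$ yields a best approximant via its selection.

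For the ``only if'' direction I would apply the formula to the constant function $\mathbf{x}\equiv x$ (using $m(I)=1$): since $s\mapsto d(x,sB_Y)$ is nonincreasing and convex on $[0,\iy)$ (as $0\in B_Y$ and $B_Y$ is convex), Jensen's inequality collapses the right-hand side to its value at $s\equiv 1$, giving $d(\mathbf{x},B_{L_p(I,Y)})=d(x,B_Y)$ \emph{unconditionally} (this half needs no proximinality). If $L_p(I,Y)$ is ball proximinal, a best approximant $g^\ast$ to $\mathbf{x}$ satisfies $\int_I\|x-g^\ast(t)\|^p\,dm=d(x,B_Y)^p$; chasing the equality cases in the Jensen chain (the essential range of $\|g^\ast(\cdot)\|$ is either a singleton or an interval on which $s\mapsto d(x,sB_Y)$ is affine, and in both cases the common value is $d(x,B_Y)$) forces $\|x-g^\ast(t)\|=d(x,B_Y)$ a.e.; as $\int\|g^\ast\|^p\,dm\le 1=m(I)$, the set $\{\|g^\ast(t)\|\le 1\}$ has positive measure, and any $t_0$ in it gives $g^\ast(t_0)\in P_{B_Y}(x)$, so $Y$ is ball proximinal.

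For the ``if'' direction, by the formula it suffices to show the infimum over budgets $s$ is attained. Writing $\phi(t,s)=d(f(t),sB_Y)^p$ --- convex, nonincreasing, eventually constant $=d(f(t),Y)^p$, and $\le\phi(t,0)=\|f(t)\|^p\in L_1(I)$ --- the functional $s\mapsto\int_I\phi(t,s(t))\,dm$ is convex and, by dominated convergence, strongly continuous. For $p>1$ the admissible set $\{s\ge0:\|s\|_p\le1\}$ is weakly compact in the reflexive space $L_p(I)$, so the infimum is attained; this settles $1<p<\iy$. The main obstacle I expect is $p=1$, where weak compactness fails: here I would run a direct ``water-filling'' argument, using that $s\mapsto d(x,sB_Y)$ is $1$-Lipschitz (so the marginal gain $-\partial_s^+\phi(t,s)\in[0,1]$) to find a multiplier $\lambda\ge0$ and a measurable selection $s^\ast(t)$ of minimizers of $s\mapsto\phi(t,s)+\lambda s$ with $\int_I s^\ast\,dm\le1$. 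The delicate subcase is $\int_I\rho(t)\,dm=1$, where $\rho(t)$ is the least norm of a point of $P_Y(f(t))$: then the unconstrained optimum $s^\ast=\rho$ is only barely feasible and the associated best approximant requires an \emph{exact} minimal-norm measurable selection into $P_Y(f(t))$, which I would obtain as a limit of near-minimal-norm selections. Verifying the measurability of all the multifunctions involved (where separability and proximinality of $Y$ enter, just as in Theorem~\ref{T1}(b)) is the remaining routine but necessary bookkeeping.
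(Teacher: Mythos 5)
Your overall strategy (a budget-allocation distance formula plus a convex-optimization attainment argument) is genuinely different from the paper's route, but as written it has two real gaps, both at $p=1$. First, in the ``only if'' direction your Jensen equality-case analysis does not force $\|x-g^{\ast}(t)\|=d(x,B_Y)$ a.e.: when $p=1$ the function $s\mapsto d(x,sB_Y)$ can be affine and non-constant on the essential range of $\|g^{\ast}(\cdot)\|$, and then Jensen is an equality without any pointwise rigidity. Concretely, take $X=Y=\mathbb{R}$, $x=2$, $p=1$ and $g^{\ast}=2\chi_{[0,1/2]}$: then $\|g^{\ast}\|_1=1$ and $\|\mathbf{x}-g^{\ast}\|_1=1=d(2,B_Y)$, so $g^{\ast}$ is a best approximant to the constant function $\mathbf{x}\equiv 2$ from $B_{L_1(I,Y)}$, yet $\|x-g^{\ast}(t)\|$ takes only the values $0$ and $2$, and on the set $\{\|g^{\ast}(t)\|\le 1\}$ one has $g^{\ast}(t)=0\notin P_{B_Y}(2)=\{1\}$; so your extraction of a point of $P_{B_Y}(x)$ at a single $t_0$ fails. (For $p>1$ the chase does go through, since a convex $\phi$ with $\phi^p$ affine and non-constant on an interval is impossible.) The paper avoids this entirely by averaging rather than evaluating: with $y_0=\int_I g^{\ast}(t)\,dm(t)$ one has $\|y_0\|\le\|g^{\ast}\|_1\le\|g^{\ast}\|_p\le 1$ and $\|x-y_0\|\le\int_I\|x-g^{\ast}(t)\|\,dm(t)\le\|\mathbf{x}-g^{\ast}\|_p=d(x,B_Y)$, which settles this direction for all $1\le p<\infty$ with no equality-case analysis; you should replace your $t_0$-argument by this barycenter step.

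Second, in the ``if'' direction your weak-compactness argument covers only $1<p<\infty$, and $p=1$ --- which is exactly where the difficulty of the theorem sits --- is left as a plan. The water-filling step is not carried out (a measurable choice of pointwise minimizers of $s\mapsto d(f(t),sB_Y)+\lambda s$, the existence of a multiplier $\lambda$ at which the budget $\int_I s\,dm\le 1$ can be met with complementary slackness, including mixing selections at a fixed $\lambda$ when the budget jumps), and the device you propose for the delicate subcase does not work as stated: a ``limit of near-minimal-norm selections'' into $P_Y(f(t))$ need not converge, since there is no compactness in $Y$ and the minimal norm over $P_Y(f(t))$ is not attained in general. That subcase can in fact be handled without limits: setting $\rho(t)=\min\{s\ge 0: d(f(t),sB_Y)=d(f(t),Y)\}$ (the minimum exists because $s\mapsto d(f(t),sB_Y)$ is continuous and eventually equal to $d(f(t),Y)$ by proximinality of $Y$), ball proximinality of $Y$ gives a nearest point to $f(t)$ in $\rho(t)B_Y$, which automatically lies in $P_Y(f(t))$ and has minimal norm, and the associated multifunction has measurable graph, so the selection theorem (Theorem~\ref{T2}) applies. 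Until these $p=1$ steps are written out, your proposal proves the forward implication only for $1<p<\infty$ and $p=\infty$; note also that the paper's own forward argument runs differently, through proximinality of $L_p(I,B_Y)$ for separable ball proximinal $Y$ (quoted from \cite{BLR}) rather than through attainment of an optimal budget.
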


And also,
\begin{thm}[Theorem~\ref{T14}]\label{M3}
Let $Y$ be a separable proximinal subspace of $X$, then consider the following statements. \bla
\item $Y(B_Y)$ is uniformly proximinal in $X$.
\item $L_\iy(I,Y)(B_{L_\iy(I,Y)})$ is uniformly proximinal in $L_\iy(I,X)$.
\item $L_\iy(I,Y)(B_{L_\iy(I,Y)})$ is strongly proximinal in $L_\iy(I,X)$.
\el
Then $(a)\Longleftrightarrow (b)$ and $(b)\Longrightarrow (c)$.
\end{thm}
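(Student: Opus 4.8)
The plan is to prove the three assertions $(b)\Rightarrow(c)$, $(b)\Rightarrow(a)$ and $(a)\Rightarrow(b)$ separately, running the subspace statements and the ball statements in parallel: since $B_{L_\iy(I,Y)}=L_\iy(I,B_Y)$ and every argument below uses only that $Y$, respectively $B_Y$, is a closed, convex, separable, proximinal set, the two cases differ only in notation. The implication $(b)\Rightarrow(c)$ is essentially formal, because uniform proximinality is strong proximinality in which the modulus $\de$ is chosen independently of the point; one only has to unfold the normalisation built into the definition of uniform proximinality (in the subspace case, pass from $x$ to $x/d(x,Y)$; in the ball case it is immediate), after which strong proximinality of $L_\iy(I,Y)$, respectively of $L_\iy(I,B_Y)$, drops out.

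For $(b)\Rightarrow(a)$ I would restrict to constant functions. Fix $x\in X$ and let $f\in L_\iy(I,X)$ be the constant function with value $x$. By Theorem~\ref{T1}(a) one has $d(f,L_\iy(I,Y))=d(x,Y)$, and unravelling the supremum norm gives $P_{L_\iy(I,Y)}(f)=\{g\in L_\iy(I,Y):g(t)\in P_Y(x)\text{ a.e.}\}$, while the constant function $z$ lies in $P_{L_\iy(I,Y)}(f,\de)$ for every $z\in P_Y(x,\de)$. Applying uniform proximinality of $L_\iy(I,Y)$ to this constant function, with modulus $\de=\de(\e)$, produces $g_0\in P_{L_\iy(I,Y)}(f)$ with $\|z-g_0\|_\iy\le\e$; then $g_0(t)\in P_Y(x)$ and $\|z-g_0(t)\|\le\e$ for a.e.\ $t$, so $z\in P_Y(x)+\e B_X$. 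Since $\de$ is independent of $x$, $Y$ is uniformly proximinal; the identical computation with $B_Y$ in place of $Y$ handles the ball case.

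The substantive direction is $(a)\Rightarrow(b)$. Proximinality of $L_\iy(I,Y)$, respectively $L_\iy(I,B_Y)$, is available without using (a): $Y$ is separable and proximinal, so Theorem~\ref{T1}(b), respectively the recalled result of \cite{BLR}, applies, and, writing $R:=d(f,L_\iy(I,Y))$ (we may assume $R\le1$, this being the range entering the definition of uniform proximinality), one has $d(f(t),Y)\le R$ a.e., $P_{L_\iy(I,Y)}(f)$ is precisely the set of $g\in L_\iy(I,Y)$ with $\|f(t)-g(t)\|\le R$ a.e., and a measurable (Kuratowski--Ryll-Nardzewski) selection from the a.e.\ nonempty closed convex sets $\{y\in Y:\|f(t)-y\|\le R\}$ exhibits an element of $P_{L_\iy(I,Y)}(f)$. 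For the quantitative statement, fix $\e>0$ and choose first a threshold $\eta>0$ and then $\de>0$, both depending only on $\e$ and on the uniform-proximinality modulus of $Y$. Given $g\in P_{L_\iy(I,Y)}(f,\de)$, so that $\|f(t)-g(t)\|\le R+\de$ a.e., split $I$ into $S_1=\{t:d(f(t),Y)>R-\eta\}$ and $S_2=\{t:d(f(t),Y)\le R-\eta\}$. On $S_1$ the point $g(t)$ lies in $P_Y(f(t),\de+\eta)$, so, provided $\de+\eta$ does not exceed the uniform-proximinality modulus of $Y$, there is $y^*(t)\in P_Y(f(t))$ with $\|g(t)-y^*(t)\|\le\e$, whence $y^*(t)\in B[f(t),R]\cap B[g(t),\e]\cap Y$. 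On $S_2$ there is slack: by convexity of the norm, moving $g(t)$ the fraction $\la=\de/(\de+\eta)$ of the way along the segment to any $y^*(t)\in P_Y(f(t))$ lands inside $B[f(t),R]\cap Y$ and displaces $g(t)$ by at most $3\de/\eta$, which is $\le\e$ once $\de$ is small. Hence $B[f(t),R]\cap B[g(t),\e]\cap Y$ is a.e.\ nonempty, closed and convex; a Kuratowski--Ryll-Nardzewski selection $g_0$ from it belongs to $L_\iy(I,Y)$, satisfies $\|f-g_0\|_\iy\le R$ (so $g_0\in P_{L_\iy(I,Y)}(f)$) and $\|g-g_0\|_\iy\le\e$, with $\de$ depending only on $\e$. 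The ball version runs identically, using convexity of $B_Y$ for the segment step and uniform proximinality of $B_Y$ for the $S_1$ step.

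I expect the main obstacle to be exactly the set $S_2$ in the last paragraph. A member $g$ of $P_{L_\iy(I,Y)}(f,\de)$ is only a near-best approximation of $f$ in the supremum norm, so at times $t$ where $d(f(t),Y)$ is strictly smaller than $R$ the value $g(t)$ can be a poor approximant of $f(t)$ and hypothesis (a) does not apply at such $t$; the convexity correction repairs this, but one must then organise the bookkeeping among $\e$, $\eta$ and $\de$ so that the resulting modulus is genuinely independent of $f$. This is precisely where plain strong proximinality of $Y$ — which only yields a $\de$ depending on the point — is insufficient and the uniformity in (a) is indispensable, which is also why Theorem~\ref{M1} does not extend to $p=\iy$. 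A subsidiary but recurring point is that each selection (for proximinality of the $L_\iy$-space and for the construction of $g_0$) relies on Kuratowski--Ryll-Nardzewski, hence on separability of $Y$, so that hypothesis cannot be dropped.
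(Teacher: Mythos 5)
Your overall architecture (treat $Y$ and $B_Y=L_\iy(I,B_Y)$-wise in parallel, reduce (b)$\Rightarrow$(a) to constant functions, and finish (a)$\Rightarrow$(b) with a measurable selection from $t\mapsto B[f(t),R]\cap B[g(t),\e]\cap Y$ using separability of $Y$) is the same as the paper's, but the core of your (a)$\Rightarrow$(b) argument is pitched at a different property than the one actually defined in Definition~\ref{D2}, and this creates a genuine gap. In Definition~\ref{D2} the modulus $\de(\e,R)$ is attached to the \emph{upper bound} $R$ on $d(x,C)$, and the conclusion is only a point $y'\in C$ with $\|x-y'\|\le R$ and $\|y-y'\|<\e$ --- not a point of $P_C(x)$. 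Your $S_1$ step (``$g(t)\in P_Y(f(t),\de+\eta)$, hence there is $y^*(t)\in P_Y(f(t))$ with $\|g(t)-y^*(t)\|\le\e$, with a modulus independent of the point'') therefore does not follow from the hypothesis as stated: to extract an exact best approximant you would have to apply Definition~\ref{D2} at the radius $R'=d(f(t),Y)$, whose modulus $\de(\e,R')$ varies with $t$, and you supply no uniform lower bound for it over $R'\in(R-\eta,R]$ (a homogeneity/scaling argument could repair this for the subspace case, but not for $B_Y$, which is not scale invariant). The same definitional mismatch appears in (b)$\Rightarrow$(a), where you prove ``$P_Y(x,\de)\ci P_Y(x)+\e B_X$ with $\de$ independent of $x$'' instead of the conclusion of Definition~\ref{D2}; the constant-function idea is right, but one should apply uniform proximinality of $L_\iy(I,Y)$ with the given $R$ to $f\equiv x$, $g\equiv y$ and evaluate the resulting $h$ at a suitable point $t$.

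Ironically, once the definition is read as in the paper, the obstacle you flagged as the main difficulty (the set $S_2$ where $d(f(t),Y)\le R-\eta$) disappears, and with it the whole $S_1/S_2$ decomposition and the convexity correction: since $d(f(t),Y)\le\|d(f(\cdot),Y)\|_\iy=d(f,L_\iy(I,Y))\le R$ a.e.\ and $\|f(t)-g(t)\|<R+\de$ a.e., Definition~\ref{D2} applies verbatim at a.e.\ $t$ with one and the same triple $(\e,R,\de)$ and yields $B[f(t),R]\cap B[g(t),\e]\cap Y\neq\es$ a.e.; the measurable selection (the paper uses graph measurability plus the selection theorem quoted as Theorem~\ref{T2} on the complete measure space, which is the safer route than invoking Kuratowski--Ryll-Nardzewski directly on an intersection of set-valued maps) then gives $h\in L_\iy(I,Y)$ with $\|f-h\|_\iy\le R$ and $\|g-h\|_\iy\le\e$, which is exactly (b). Two smaller points: the normalisation ``we may assume $R\le1$'' is not justified in the ball case (and is unnecessary --- bound the displacement by $\frac{\de}{\eta}(2R+\de)$ instead), and your (b)$\Rightarrow$(c) is correct in substance since uniform proximinality implies strong proximinality (Remark~\ref{R9}), though no normalisation of $x$ is needed there either: one simply applies the definition with $R=d(x,C)$, which works equally for the ball.
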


We couldn't answer the Problem~\ref{Q4}, the above Theorem is a partial answer of Problem~\ref{Q4}.
A section-wise illustration of this work is outlined in the next few paragraphs.

In Section~\ref{S2} we discuss some distance formulas which enable us to conclude 
the 
strong proximinality of $L_p(I,Y)$ in $L_p(I,X)$. These distance formulas are 
proved with the help of pathologies of measurable set valued functions and 
their measurable selections. Problem~\ref{Q3} is answered in Theorem~\ref{T4}.

The non-availability of conclusion in Theorem~\ref{M1} for $p=\iy$ invites a uniform version of strong proximinality of $Y$ in $X$, as discussed in Section~\ref{S3}. 
To begin with, the content of Section~\ref{S3} we would like to thank the authors in 
\cite{LZ} for drawing our attention towards the notion of 'uniform 
proximinality' in Banach space. However, a similar notion dates back to the paper by Pai and 
Nowroji (\cite{PN}) in the context of Property-$(R_2)$; nevertheless, the way 
used in \cite[Pg~79]{LZ} to define 'uniform proximinality' is wrong. A simple geometry 
in the Euclidean space $\mb{R}^2$ clarifies the flaw (Example~\ref{E5}).

We adopt the idea introduced in \cite{PN} in terms of Property-($R_2$) and 
define 'uniform proximinality' of a closed convex set. Section~\ref{S3} is devoted to discussing this property. Strong proximinality can now be viewed as a local version of this 'uniform proximinality'. Several examples are given which satisfy this 
property; the list includes closed convex subsets of uniformly convex 
space, subspace with $1\frac{1}{2}$-ball property and any $U$-proximinal subspace (see \cite{Lau}). An elegant observation in this context is that closed unit ball of a Banach space is not necessarily uniformly proximinal (using Example in \cite{La}), we derive that it is true if $X$ has $3.2.I.P$ (see \cite{Lim}). Finally, we prove
the strong proximinality of $L_\iy(I,Y)$ in $L_\iy(I,X)$ as a necessary condition for uniform proximinality of $Y$ in $X$ (Theorem~\ref{M3}).
A weaker version of \cite[Theorem~15]{Rao} is also proved here.

Section~\ref{S4} is devoted to ball proximinality and strong ball proximinality of 
$L_p(I,Y)$ in $L_p(I,X)$. One can define $L_p(I,B_X)$, similar to the space $L_p(I,X)$, which represents the set of measurable functions from $I$ to $B_X$ 
which are $p$-integrable. It is proved for $f\in L_p(I,X)$, 
$d(f,L_p(I,B_Y))=d(f,B_{L_p(I,Y)})$ for $1\leq p\leq\iy$ which answers Problem~\ref{Q2}. This result together 
with Theorem~\ref{T13} leads to some interesting observations. The main results 
in this Section are stated in Theorem~\ref{M2}. Our results answer the question 
raised in \cite{BLR} after Theorem~4.10. 

Since in a Banach space $X$, $B_X$ is not necessarily 
strongly proximinal in $X$ we found it is meaningful to identify some cases when the answer is affirmative. From \cite{DS} it follows that $B_{L_p(\mu)}$ is 
strongly proximinal in $L_p(\mu)$ (spaces having reflexivity and Kadec-Klee property) for any positive measure $\mu$ when $1<p<\iy$. 
From our result it follows that the conclusion is still true for $L_p(\mu)$ where $p=1, \iy$ (for real scalar); 
in fact the result holds true for $B_{L_p(I,X)}, 1\leq p\leq\iy$ when and only 
when $X$ has the similar property.

A new class of examples is given in Section~\ref{S5} which are uniformly proximinal.

For a Banach space $X, B_X, S_X$ and $B[x,r]$ denote the closed unit ball, the
closed unit sphere and closed ball with centre at $x$ and radius $r$
respectively. All Banach spaces are assumed to be complex unless otherwise stated. Those spaces that have any intersection properties of balls like $3.2.I.P.$, $4.2.I.P.$ are assumed to be real.
$X$ will always denote a Banach space and by a subspace we always
mean a closed subspace. 

\section{Strong proximinality of $L_p(I,Y)$ in $L_p(I,X)$}\label{S2}

Similar to the Theorem~\ref{T1} we now approach towards a distance formula which 
is actually stated in Theorem~\ref{L3}. To this end we need the following 
pathologies related to the set valued functions which help us to derive Theorem~\ref{L3}. 

\begin{lem}\label{L1}
\bla
\item Let $X$ be a Banach space and $Y$ be a proximinal subspace of $X$ such that the 
metric projection $P_Y$ is uHsc. Then the mapping $G:X\times 
X\to \mb{R}$ defined by  $G((x,z))=d(x,P_Y(z))$ is upper semi-continuous in 
first variable and lower semi-continuous in second variable.
\item Let $Y$ be a subspace as defined in $(a)$ and is also separable, then for 
any two measurable
functions $f:I\to Y$ and $g:I\to X$ the mapping $\varphi:I\to \mb{R}$ defined by
$\varphi(t)=d(f(t), P_Y(g(t)))$ is measurable.
\el
\end{lem}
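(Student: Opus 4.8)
The plan is to establish (a) and then deduce (b) from it. For (a), fix $z\in X$; since $Y$ is proximinal, $P_Y(z)$ is a nonempty bounded closed convex subset of $Y$ (with $P_Y(z)=\{z\}$ when $z\in Y$), so $x\mapsto d(x,P_Y(z))$ is the distance function to a fixed nonempty set and is therefore $1$-Lipschitz, in particular continuous, and a fortiori upper semi-continuous in the first variable. The substantive claim is lower semi-continuity in the second variable. Fix $x\in X$ and a sequence $z_n\to z$ in $X$. Given $\e>0$, uHsc of $P_Y$ furnishes $r>0$ such that $P_Y(w)\ci P_Y(z)+\e B_X$ whenever $\|w-z\|<r$. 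For all large $n$ we have $\|z_n-z\|<r$, so each $y\in P_Y(z_n)$ satisfies $\|y-y'\|\le\e$ for some $y'\in P_Y(z)$, whence $\|x-y\|\ge\|x-y'\|-\e\ge d(x,P_Y(z))-\e$; taking the infimum over $y\in P_Y(z_n)$ gives $G(x,z_n)\ge G(x,z)-\e$. Hence $\liminf_n G(x,z_n)\ge G(x,z)-\e$ for every $\e>0$, i.e.\ $G(x,\cdot)$ is lower semi-continuous.

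For (b), the idea is to bootstrap from simple functions, exploiting that $G$ depends $1$-Lipschitz-continuously on its first variable. First, for each fixed $y\in Y$, part (a) gives that $z\mapsto G(y,z)=d(y,P_Y(z))$ is lower semi-continuous on $X$, hence Borel measurable; since $g$ is measurable, $t\mapsto G(y,g(t))$ is measurable. Next, if $s=\sum_{k=1}^{n}y_k\mathbf{1}_{A_k}$ is a $Y$-valued simple function with $A_k\in\mc{B}$, then $t\mapsto d(s(t),P_Y(g(t)))=\sum_{k=1}^{n}\mathbf{1}_{A_k}(t)\,G(y_k,g(t))$ is a finite combination of measurable functions, hence measurable. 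Finally, since $Y$ is separable, $f$ is the a.e.\ pointwise limit of a sequence $(s_m)$ of $Y$-valued simple functions; because $|G(s_m(t),g(t))-G(f(t),g(t))|\le\|s_m(t)-f(t)\|\to 0$ for a.e.\ $t$, the function $\varphi(t)=G(f(t),g(t))$ is the a.e.\ pointwise limit of the measurable functions $t\mapsto G(s_m(t),g(t))$, and is therefore measurable (a.e.\ pointwise limits of measurable functions are measurable, the space $(I,\mc{B},m)$ being complete).

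The only genuinely delicate point is the lower semi-continuity in (a): this is the sole place where the uHsc hypothesis is used, and the subtlety is to apply it in the correct direction, namely that the values $P_Y(z_n)$ for $z_n$ near $z$ are trapped inside the $\e$-enlargement of the limiting set $P_Y(z)$, and not conversely. Everything else is soft — the Lipschitz estimate for $G$ in the first variable, Borel measurability of semicontinuous real functions, and the simple-function approximation guaranteed by separability of $Y$ — provided one records at the outset that proximinality makes every $P_Y(z)$ nonempty, so that all the quantities $d(\cdot,P_Y(\cdot))$ are well defined and finite.
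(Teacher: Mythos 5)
Your proof is correct. Part (a) is essentially the paper's own argument: upper semicontinuity (indeed $1$-Lipschitz continuity) in the first variable from the distance-to-a-fixed-set estimate, and lower semicontinuity in the second variable by applying uHsc in exactly the direction you identify, $P_Y(z_n)\subseteq P_Y(z)+\varepsilon B_X$ for large $n$, giving $d(x,P_Y(z_n))\ge d(x,P_Y(z))-\varepsilon$. Part (b), however, is organized differently from the paper. The paper proves that the map $(y,x)\mapsto d(y,P_Y(x))$ is jointly measurable on $Y\times X$, by writing its superlevel sets $\{G\ge\alpha\}$ as countable intersections of countable unions of measurable rectangles built from a countable dense subset $D\subseteq Y$ (using the Lipschitz behaviour in $y$ and the lower semicontinuity in $x$ to verify the set identity), and then composes with the measurable map $t\mapsto(f(t),g(t))$. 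You avoid any joint-measurability considerations on the product: for fixed $y$ the function $G(y,\cdot)$ is lower semicontinuous, hence Borel, so $t\mapsto G(y,g(t))$ is measurable; this handles $Y$-valued simple functions in the first slot, and separability of $Y$ lets you write $f$ as an a.e.\ limit of such simple functions, after which the $1$-Lipschitz estimate $|G(s_m(t),g(t))-G(f(t),g(t))|\le\|s_m(t)-f(t)\|$ and completeness of $(I,\mathcal{B},m)$ finish the argument. Both routes use the same two analytic facts from (a) and the separability of $Y$; the paper's version isolates a joint Borel-measurability statement for $G$ that could be reused independently of $f$, while yours is more elementary, sidesteps the product $\sigma$-field bookkeeping (a genuine convenience since $X$ need not be separable), and puts the separability hypothesis to work through the standard simple-function approximation instead of a dense-set description of superlevel sets.
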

\begin{proof}
$(a).$ Upper semi continuity of $G$ at it's first variable follows from the fact that, 
for a closed set $A$ if $h(x)=d(x,A)$ then $h$ defines a continuous (and hence upper semi-continuous) mapping from $X$ to 
$\mb{R}$.

On the other hand let $\e >0$. Since $P_Y$ is uHsc, there exists a 
$\de>0$ such that
 $P_Y(z)\subseteq P_Y(z_0)+ \e B_Y$ whenever $\|z-z_0\|<\de$. If $(z_n)$ 
converges to $z$, there exists an $N\in \mathbb{N}$ such that $\|z_n - z\| < 
\de$ for all $n\ge N$. Hence for $n\ge N$ we get,
$  d(x, P_Y(z_n)) \ge d(x, P_Y(z)+\e B_Y) \ge d(x, P_Y(z))- \e.$

Hence we have $\liminf_n d(x,P_Y(z_n))\ge d(x, P_Y(z))$.

$(b).$ Let $D\ci Y$ be a countable dense subset of $Y$. It is clear that the 
mapping $A:I\to Y\times X$ defined by
$A(t)=(f(t),g(t))$ is measurable. We now show that $G:Y\times X\to\mb{R}$
defined by $G((y,x))=d(y,P_Y(x))$ is measurable. Hence $\varphi(t)=G(A(t))$ will
be measurable.

To this end we show that $G^{-1}([\al,\iy))$ is measurable for all real 
$\al$'s. 

Now, $G((y,x))\geq  \al\Longleftrightarrow$ 

$(\forall n\in\mb{N})(\exists z_n\in 
D)\left[\|y-z_n\|<\frac{1}{n} ~\&~ G((z_n,x))>\al- \frac{1}{n}\right]\Longleftrightarrow$ 

$(y,x)\in \bigcap_n\bigcup_{z\in D}\left[\{y\in
Y:\|y-z\|<\frac{1}{n}\}\times\{x\in X:G((z,x))>\al-\frac{1}{n}\}\right]$.

Clearly if $(y,x)\in$ RHS, then there exists a sequence $(z_n)\ci D$ such
that $G((z_n,x))>\al+\frac{1}{n}$ and $z_n\to y$ and hence $G((y,x))\geq\limsup_n
G((z_n,x))\geq\al$. On the other hand if $G((y,x))\geq\al$, then the sets
$\{v\in Y:G((v,x))<G((y,x))+\frac{1}{n}\}$ and
$\{z\in X:G((y,z))>\al-\frac{1}{n}\}$ are open for all $n$ and contain $y, x$
respectively. This completes the proof.
\end{proof}

Now we need the following technical Theorem which helps us to find a measurable selection of a closed set valued measurable function. We call a set valued map $F:X\to 2^Y$ is measurable if the graph of $F$, $Gr(F)=\{(x,F(x)):x\in X\}=\bigcup\{(x,y):x\in X, y\in F(x)\}\in \mc{B}_X\bigotimes\mc{B}_Y$. The last set represents the smallest $\si$-field containing the measurable rectangles $M\times N$, where $M\in\mc{B}_X, N\in\mc{B}_Y$, where $\mc{B}_X, \mc{B}_Y$ represent the Borel $\si$-fields over $X, Y$ respectively. 

\begin{thm}\cite[Corollary~5.5.8.]{SS}\label{T2}
Let $(\Omega,\mathfrak{M},\mu)$ be a complete probability space, $Y$ a polish space and $B\in \mathfrak{M}\bigotimes \mc{B}_Y$. Then $\pi_{\Omega}(B)\in \mathfrak{M}$ and $B$ admits a $\mathfrak{M}$ measurable section. 
\end{thm}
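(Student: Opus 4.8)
This is the classical \emph{measurable projection theorem} together with the \emph{von Neumann--Jankov measurable uniformization theorem}; my plan is to run the standard descriptive set theory argument, with the abstract complete probability space $(\Omega,\mathfrak{M},\mu)$ playing the role usually occupied by a Polish domain. The first step is to present $B$ through a Souslin scheme. Let $\mc{P}=\{M\times F:\ M\in\mathfrak{M},\ F\ \text{closed in}\ Y\}$; this paving is stable under finite intersections, and the complement (taken in $\Omega\times Y$) of a generator $M\times F$ equals $\big((\Omega\sm M)\times Y\big)\cup\bigcup_n(M\times F_n)$, where $Y\sm F=\bigcup_nF_n$ is a countable union of closed sets, hence it lies in the class $\mc{S}(\mc{P})$ of Souslin sets over $\mc{P}$. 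Since $\mc{S}(\mc{P})$ contains $\mc{P}$ together with the complements of its members and is closed under countable unions and intersections, it contains $\si(\mc{P})=\mathfrak{M}\otimes\mc{B}_Y$. Thus $B$ carries a regular Souslin scheme: there are $M_s\in\mathfrak{M}$ and closed $F_s\ci Y$, indexed by $s\in\mb{N}^{<\mb{N}}$, with $M_{s\frown i}\ci M_s$, $F_{s\frown i}\ci F_s$, $\mathrm{diam}\,F_s\le 2^{-|s|}$, and $B=\bigcup_{\be\in\mb{N}^{\mb{N}}}\bigcap_{n}\big(M_{\be|n}\times F_{\be|n}\big)$.

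The projection assertion is then essentially formal: $\pi_\Omega(B)=\bigcup_{\be}\bigcap_n M_{\be|n}$ is a Souslin set over $\mathfrak{M}$, and by the Lusin--von Neumann theorem the Souslin operation does not leave the $\mu$-completion of a $\si$-field. I would justify this via Choquet's capacitability theorem: $A\mapsto\mu^*(A)$ is an $\mathfrak{M}$-capacity, so every $\mc{S}(\mathfrak{M})$-set is capacitable and hence approximable from inside, up to arbitrarily small outer measure, by a countable intersection of members of $\mathfrak{M}$; therefore it coincides with an $\mathfrak{M}$-set off a $\mu$-null set, and completeness of $\mathfrak{M}$ gives $\pi_\Omega(B)\in\mathfrak{M}$.

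For the section I restrict $\mathfrak{M}$ to the measurable set $\pi_\Omega(B)$, so I may assume $\pi_\Omega(B)=\Omega$, and it suffices to choose, $\mathfrak{M}$-measurably in $\omega$, a branch $\be(\omega)\in\mb{N}^{\mb{N}}$ with $\omega\in M_{\be(\omega)|n}$ for all $n$: then $\bigcap_nF_{\be(\omega)|n}$ is a single point $f(\omega)$, by completeness of $Y$ together with $\mathrm{diam}\,F_{\be(\omega)|n}\to0$, one has $(\omega,f(\omega))\in B$ by construction, and $f$ is the pointwise limit of the $\mathfrak{M}$-measurable maps $\omega\mapsto(\text{a designated point of}\ F_{\be(\omega)|n})$, hence $\mathfrak{M}$-measurable. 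The branch is built one coordinate at a time: given $\be(\omega)|n=s$, let $\be(\omega)(n)$ be the least $i$ for which $\omega$ belongs to $\bigcup_{\ga\in\mb{N}^{\mb{N}}}\bigcap_kM_{s\frown i\frown(\ga|k)}$, the $\Omega$-projection of the part of the scheme rooted at $s\frown i$; this set is again a Souslin set over $\mathfrak{M}$, hence in $\mathfrak{M}$ by the first part, so each coordinate of $\be(\cdot)$ --- and therefore $\be(\cdot)$ itself --- is $\mathfrak{M}$-measurable. The delicate point, and the one I expect to be the real obstacle, is forcing the limit $f(\omega)$ to fall inside $B$ rather than merely inside the closure of the fibre $B_\omega$; this is precisely why the argument is routed through the Souslin/tree representation --- equivalently, why one writes the analytic fibre $B_\omega$ as the projection of a \emph{closed} subset of an auxiliary Polish space --- so that at every stage the selected closed pieces $F_{\be(\omega)|n}$ remain nonempty and nested and their intersection is a genuine point of $B_\omega$. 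The remaining checks, that the coordinatewise minima and the countable unions and intersections involved preserve membership in $\mathfrak{M}\otimes\mc{B}_Y$, respectively in $\mathfrak{M}$, and are compatible with the kind of two-variable measurability used in Lemma~\ref{L1}, are routine.
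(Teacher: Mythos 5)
The paper itself offers no proof of this statement: it is quoted from Srivastava \cite[Corollary~5.5.8]{SS}, where it is obtained from von Neumann's selection theorem (the reduction to the Polish setting there goes through a countably generated sub-$\si$-algebra of $\mathfrak{M}$ rather than by redoing the construction over the abstract $\Omega$). Your proposal instead reproves the result directly, via a Souslin scheme of rectangles $M_s\times F_s$, closure of a $\mu$-complete $\si$-algebra under the Souslin operation (your capacitability argument for $\mu^*$ is a legitimate way to get this), and a leftmost-branch selection; this is the standard Aumann--von Neumann/Sainte-Beuve argument and is sound in outline, and it is independent of the citation route the paper relies on.

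Two steps, however, are asserted rather than secured, and the second affects the correctness of what you wrote. First, passing from ``$B$ is Souslin over $\mc{P}$'' to a monotone scheme with $\mathrm{diam}\,F_s\le 2^{-|s|}$ is not automatic: one must refine each $F_s$ by a countable cover of $Y$ by closed sets of small diameter and re-index, invoking idempotence of the Souslin operation; routine, but it should be said. Second, and more importantly, nodes with $F_s=\es$ must be pruned, say by replacing $M_s$ with $\es$ whenever $F_s=\es$ (this changes neither $B$ nor the regularity of the scheme). Without this pruning the identity $\pi_\Omega(B)=\bigcup_{\beta}\bigcap_n M_{\beta|n}$ is false in general --- a branch along which every $M_{\beta|n}$ equals $\Omega$ but some $F_{\beta|n}$ is empty contributes everything to the right-hand side and nothing to the left --- and, likewise, your branch $\beta(\omega)$, being steered only by the sets $\Pi_{s}:=\bigcup_{\gamma}\bigcap_k M_{s\frown(\gamma|k)}$, which see nothing of the $F$'s, may run through empty closed pieces, so that $\bigcap_n F_{\beta(\omega)|n}$ need not be a point of the fibre at all. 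You clearly sense this difficulty in your closing paragraph, but the cure has to be built into the scheme, not only remarked upon: after pruning, $\omega\in\Pi_s$ forces $M_s\neq\es$ and hence $F_s\neq\es$, Cantor's intersection theorem applies along the chosen branch, and every remaining step of your construction (each $\Pi_s$ lies in $\mathfrak{M}$ by the projection part, $\beta(\cdot)$ is $\mathfrak{M}$-measurable, and $f$ is the uniform-on-each-point limit of the countably valued measurable maps $\omega\mapsto y_{\beta(\omega)|n}$, hence measurable with $(\omega,f(\omega))\in B$) goes through verbatim.
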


The above Theorem is a consequence of Von Naumann's selection Theorem (\cite[Theorem~5.5.2]{SS}); we may need to apply some other variant of this Theorem, but Theorem~\ref{T2} is crucially used in various places. 

\begin{lem}\label{L2}
Let $Y$ be a separable proximinal subspace of $X$ for which the map $P_Y:X\to 
2^Y$ is
uHsc. Let $f:I\to Y, g:I\to X$ are measurable, then for $\de>0$
consider the set valued function $\Phi_\de:I\to 2^Y$ defined by
$\Phi_\de(t)=P_{P_Y(g(t))}(f(t),\de)$. Then $\Phi_\de$ is measurable and it has
a measurable selection.
\end{lem}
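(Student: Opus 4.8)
The plan is to verify measurability of the set-valued map $\Phi_\de$ via its graph, and then invoke Theorem~\ref{T2} to extract a measurable selection. First I would record that $\Phi_\de(t)$ is nonempty for every $t$: since $Y$ is proximinal, $P_Y(g(t))$ is a nonempty closed convex subset of $Y$, and $f(t)$ has at least one best approximation in this set up to error $\de$, in fact $P_{P_Y(g(t))}(f(t),\de)$ contains every point of $P_Y(g(t))$ within $d(f(t),P_Y(g(t)))+\de$ of $f(t)$, which is nonempty. (One should note $\Phi_\de(t)$ is the intersection of the closed convex set $P_Y(g(t))$ with the closed ball $B[f(t), d(f(t),P_Y(g(t)))+\de]$, hence closed and convex.) Each fiber is thus a closed subset of the separable — hence Polish — space $Y$, so Theorem~\ref{T2} will apply once the graph is shown to be in $\mc{B}\bigotimes\mc{B}_Y$.

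The heart of the argument is therefore to show
\[
Gr(\Phi_\de)=\Bigl\{(t,y)\in I\times Y : y\in P_Y(g(t)) \text{ and } \|f(t)-y\|\le d(f(t),P_Y(g(t)))+\de\Bigr\}\in\mc{B}\bigotimes\mc{B}_Y.
\]
I would split this into the intersection of two sets. For the second condition, the map $(t,y)\mapsto \|f(t)-y\| - d(f(t),P_Y(g(t))) - \de$ is $\mc{B}\bigotimes\mc{B}_Y$-measurable: $(t,y)\mapsto\|f(t)-y\|$ is Carathéodory (continuous in $y$, measurable in $t$ since $f$ is measurable) hence jointly measurable, while $t\mapsto d(f(t),P_Y(g(t)))=\varphi(t)$ is measurable by Lemma~\ref{L1}(b); so its preimage of $(-\infty,0]$ is in $\mc{B}\bigotimes\mc{B}_Y$. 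For the first condition, I need $\{(t,y): y\in P_Y(g(t))\}\in\mc{B}\bigotimes\mc{B}_Y$; this follows by the same device applied to the set-valued map $t\mapsto P_Y(g(t))$, writing $y\in P_Y(g(t)) \Leftrightarrow \|g(t)-y\| - d(g(t),Y)\le 0$, where $(t,y)\mapsto\|g(t)-y\|$ is jointly measurable and $t\mapsto d(g(t),Y)$ is measurable (it equals $\|d(g(\cdot),Y)\|$ evaluated pointwise, a composition of the measurable $g$ with the continuous function $x\mapsto d(x,Y)$). Intersecting the two sets gives $Gr(\Phi_\de)\in\mc{B}\bigotimes\mc{B}_Y$.

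Finally, with $(\Omega,\mathfrak M,\mu)=(I,\mc{B},m)$ a complete probability space and $Y$ Polish, Theorem~\ref{T2} yields that $\pi_I(Gr(\Phi_\de))\in\mc{B}$ — which is all of $I$ since each fiber is nonempty — and that $Gr(\Phi_\de)$ admits a $\mc{B}$-measurable section, i.e. a measurable $h:I\to Y$ with $h(t)\in\Phi_\de(t)$ for every $t$. This is the desired measurable selection, completing the proof.

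The main obstacle I anticipate is not any single step but the bookkeeping needed to pass from pointwise expressions in $f,g$ and $P_Y$ to genuine $\mc{B}\bigotimes\mc{B}_Y$-measurability of the graph; in particular one must be a little careful that $\|f(t)-y\|$ and $\|g(t)-y\|$ are jointly measurable (Carathéodory functions on a product with a Polish second factor are), and that the uHsc hypothesis on $P_Y$ is exactly what was used in Lemma~\ref{L1}(b) to make $\varphi$ measurable — so the separability and uHsc assumptions are both genuinely needed here and should be cited rather than re-proved.
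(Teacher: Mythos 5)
Your proposal is correct and follows essentially the same route as the paper: write $\Phi_\de(t)=P_Y(g(t))\cap B[f(t),\varphi(t)+\de]$, show the graph is in $\mc{B}\bigotimes\mc{B}_Y$ by intersecting the graph of the ball condition (using measurability of $f$ and of $\varphi$ from Lemma~\ref{L1}) with the graph of $t\mapsto P_Y(g(t))$, and then invoke Theorem~\ref{T2} for the measurable selection. The only cosmetic difference is that you certify the graph of $t\mapsto P_Y(g(t))$ via joint measurability of $(t,y)\mapsto\|g(t)-y\|$ and measurability of $t\mapsto d(g(t),Y)$, whereas the paper writes it as a countable intersection over a dense sequence in $Y$; these are interchangeable.
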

\begin{proof}
Clearly we have $\Phi_\de(t)=P_Y(g(t))\cap B[f(t),\varphi(t)+\de]$, where
$\varphi$ is defined in Lemma~\ref{L1}. Since all functions in $\Phi_\de$ is
measurable, we have the graph $Gr(\Phi_\de)=\{(t,\Phi_\de(t)):t\in I\}$ is 
measurable. In fact we have the following representation for $\Phi_\de$.

Define $F_1, F_2:I\to 2^Y$ by $F_1(t)=B[f(t),\varphi(t)+\de]$ and 
$F_2(t)=P_Y(g(t))$. Since $f$ and $\varphi$ both the functions are measurable, 
$Gr(F_1)$ is measurable. Also $\{(t,y):t\in I, y\in 
F_2(t)\}=\{(t,y):\|y-f(t)\|=d(f(t),Y)\}=\bigcap_n\{(t,y):\|y-f(t)\|\leq 
\|y_n-f(t)\|\}$ where $(y_n)$ is a dense subset of $Y$. Hence the graph of 
$F_2$ 
is also measurable. Now $Gr(\Phi_\de)=Gr(F_1)\cap Gr(F_2)$. Hence 
$Gr(\Phi_\de)$ 
is again measurable. From Theorem~\ref{T2} it follows that the last 
set has a measurable selection.
\end{proof}

We now establish a distance formula between a given point in $L_p(I,Y)$ and 
the set of best approximation from a given point in $L_p(I,X)$ to $L_p(I,Y)$. 
Similar to Theorem~\ref{T1} the distance function is an integral of the point 
wise distance function.

\begin{thm}\label{L3}
Let $Y$ be a separable proximinal subspace of $X$ such that $P_Y$ is uHsc. Then
for $1\leq 
p<\iy$ and $f\in L_p(I,Y), g\in L_p(I,X)$,

$d(f, P_{L_p(I,Y)}(g))=\|d(f(.),P_Y(g(.)))\|_p$.
\end{thm}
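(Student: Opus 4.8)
The plan is to prove the two inequalities $d(f, P_{L_p(I,Y)}(g)) \leq \|d(f(\cdot),P_Y(g(\cdot)))\|_p$ and $d(f, P_{L_p(I,Y)}(g)) \geq \|d(f(\cdot),P_Y(g(\cdot)))\|_p$ separately. For the lower bound, I would start with an arbitrary $h \in P_{L_p(I,Y)}(g)$; by Theorem~\ref{T1}(c), $h(t) \in P_Y(g(t))$ a.e., so $\|f(t)-h(t)\| \geq d(f(t),P_Y(g(t)))$ a.e., and integrating the $p$-th powers gives $\|f-h\|_p \geq \|d(f(\cdot),P_Y(g(\cdot)))\|_p$. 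Taking the infimum over all such $h$ yields the lower bound. (I should first note $P_{L_p(I,Y)}(g) \neq \es$: since $Y$ is separable and proximinal, Theorem~\ref{T1}(b) gives that $L_p(I,Y)$ is proximinal in $L_p(I,X)$.)

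For the upper bound, fix $\e > 0$ and $\de > 0$. The idea is to build, for each $\de$, a function $\psi_\de \in L_p(I,Y)$ with $\psi_\de(t) \in P_Y(g(t))$ a.e. and $\|f(t)-\psi_\de(t)\| \leq d(f(t),P_Y(g(t))) + \de$ a.e.; then $\psi_\de \in P_{L_p(I,Y)}(g)$ by Theorem~\ref{T1}(c), and $\|f-\psi_\de\|_p \leq \|\,d(f(\cdot),P_Y(g(\cdot))) + \de\,\|_p \leq \|d(f(\cdot),P_Y(g(\cdot)))\|_p + \de \, m(I)^{1/p}$ by Minkowski; letting $\de \to 0$ closes the gap. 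The construction of $\psi_\de$ is exactly what Lemma~\ref{L2} provides: applying that lemma to the set-valued map $\Phi_\de(t) = P_{P_Y(g(t))}(f(t),\de) = P_Y(g(t)) \cap B[f(t),\varphi(t)+\de]$ (with $\varphi(t) = d(f(t),P_Y(g(t)))$, measurable by Lemma~\ref{L1}(b)), one obtains a measurable selection $\psi_\de$. Since $Y$ is separably valued and the values lie in $B[f(t),\varphi(t)+\de]$, I should check $\psi_\de$ is $p$-integrable: $\|\psi_\de(t)\| \leq \|f(t)\| + \varphi(t) + \de \leq 2\|f(t)\| + d(0,Y) + \de$ is dominated by an $L_p$ function, so $\psi_\de \in L_p(I,Y)$.

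The main obstacle is verifying that $\Phi_\de(t)$ is nonempty for a.e. $t$, which is what makes the selection meaningful: this is precisely where the uHsc hypothesis on $P_Y$ is needed (not merely proximinality). Indeed, $P_Y(g(t))$ being proximinal is automatic since it is weakly compact when... no — rather, one needs that $\varphi(t) = d(f(t),P_Y(g(t)))$ is actually attained, i.e. $P_Y(g(t))$ is itself a proximinal set. Strong proximinality of $Y$ (equivalently here uHsc of $P_Y$, given $Y$ proximinal) guarantees that $P_Y(z)$ is an approximatively compact convex set for each $z$, hence proximinal, so $\Phi_\de(t) \neq \es$. I would make this explicit: for $z \in X$, $P_Y(z)$ is closed, bounded, convex, and the uHsc condition forces minimizing sequences for $d(\cdot, P_Y(z))$ to be "almost" convergent in the required sense, giving a genuine nearest point; alternatively cite that a strongly proximinal subspace has approximatively compact metric projection values. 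Once nonemptiness is in hand, Lemma~\ref{L2} delivers the measurable selection and the rest is the two routine estimates above.
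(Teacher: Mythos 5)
Your overall route is the same as the paper's: the lower bound from the pointwise characterization of $P_{L_p(I,Y)}(g)$ (Theorem~\ref{T1}(c)), and the upper bound by taking a measurable selection of $\Phi_\delta(t)=P_{P_Y(g(t))}(f(t),\delta)$ via Lemma~\ref{L2} and Theorem~\ref{T2}, then letting $\delta\to 0$ (the paper uses $\delta=1/n$ and dominated convergence; your Minkowski estimate $\|f-\psi_\delta\|_p\leq\|d(f(\cdot),P_Y(g(\cdot)))\|_p+\delta\, m(I)^{1/p}$ is an equally good, if anything cleaner, way to close the gap).

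However, the paragraph you flag as ``the main obstacle'' rests on a misreading of the notation and is not needed. The set $P_C(x,\delta)$ is defined in the paper as $\{z\in C:\|x-z\|\leq d(x,C)+\delta\}$, the set of $\delta$-\emph{approximate} nearest points; it is nonempty for every $\delta>0$ as soon as $C\neq\emptyset$, by the definition of infimum. Hence $\Phi_\delta(t)=P_Y(g(t))\cap B[f(t),\varphi(t)+\delta]\neq\emptyset$ follows from proximinality of $Y$ alone --- no attainment of $\varphi(t)=d(f(t),P_Y(g(t)))$ is required, and your auxiliary claim that strong proximinality (or uHsc of $P_Y$) makes each $P_Y(z)$ approximatively compact, hence a proximinal set, is both unproved and unnecessary; I would not rely on it. The uHsc hypothesis enters the argument elsewhere: it is what makes $G((x,z))=d(x,P_Y(z))$ lower semi-continuous in $z$ (Lemma~\ref{L1}(a)), hence $\varphi$ measurable (Lemma~\ref{L1}(b)), which in turn is what makes the graph of $\Phi_\delta$ measurable so that Theorem~\ref{T2} applies (Lemma~\ref{L2}). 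One further small repair: your domination $\varphi(t)+\delta\leq\|f(t)\|+d(0,Y)+\delta$ is not correct; instead bound $\varphi(t)\leq\|f(t)\|+2\|g(t)\|$ (since any $y\in P_Y(g(t))$ satisfies $\|y\|\leq\|g(t)\|+d(g(t),Y)\leq 2\|g(t)\|$), which still gives $\psi_\delta\in L_p(I,Y)$ because $f,g\in L_p$. With those two corrections your proof matches the paper's.
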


\begin{proof}
From Lemma~\ref{L1} it follows that the map $t\mapsto 
d(f(t),P_Y(g(t)))$
is measurable and hence the above integral is justified. Now for the given range of $p$,
\beqa d(f,P_{L_p(I,Y)}(g)) 
&=& \inf_{h\in P_{L_p(I,Y)}(g)}\|f-h\|_p \\
                     &\geq & \|d(f(.),P_Y(g(.)))\|_p, \mbox{ from
Theorem~\ref{T1}(b)}.
\eeqa  

Now for each $n$ define $\Phi_n:I\to 2^Y$ by
$\Phi_n(t)=P_{P_Y(g(t))}(f(t),\frac{1}{n})$. From Lemma~\ref{L2} it follows that the
graph of $\Phi_n$ is measurable and hence by Theorem~\ref{T2} it has a 
measurable selection. Let
$h_n$ be such a selection. Clearly for all $t, h_n(t)\in P_Y(g(t))$ hence
$h_n\in P_{L_p(I,Y)}(g)$.

$d(f,P_{L_p(I,Y)}
(g))\leq\liminf_n\|f-h_n\|_p=\|d(f(.),
P_Y(g(.)))\|_p$. The last equality follows from the Dominated convergence
theorem for $p<\iy$ and this establishes the other inequality.
\end{proof}

\begin{rem}\label{R2}
For $p=\iy$, $P_{L_\iy(I,Y)}(g)\supseteq \{h\in L_\iy(I,Y):h(t)\in P_Y(g(t)) ~\mbox{a.e.}\}=Z$, say. Hence $d(f,P_{L_\iy(I,Y)}(g))\leq \|d(f(.),P_Y(g(.)))\|_\iy$~: In fact,
\beqa 
d(f,P_{L_\iy(I,Y)}(g))&\leq& d(f,Z) \\
                                           &=&\inf_{h\in Z}{ess \sup}_{t\in I}\|f(t)-h(t)\| \\
                                           &=& {\mbox{ess} \sup}_{t\in I} d(f(t),P_Y(g(t))) \\
                                           &=&\|d(f(.),P_Y(g(.)))\|_\iy.
\eeqa
\end{rem}

Our main results of this section are the following.

\begin{thm}\label{T6}
Let $Y$ be a separable proximinal subspace of $X$. Then $Y$ is strongly 
proximinal in $X$ if and only if $L_p(I,Y)$ is strongly proximinal in $L_p(I,X)$ for $1\leq p<\iy$.
\end{thm}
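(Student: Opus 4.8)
The plan is to exploit the distance formula of Theorem~\ref{L3} together with the pointwise characterization of best approximants in Theorem~\ref{T1}(c). Throughout I fix $1\le p<\iy$.

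\emph{The easy direction.} Suppose $L_p(I,Y)$ is strongly proximinal in $L_p(I,X)$. Given $x\in X\sm Y$ and $\e>0$, I consider the constant function $g\equiv x$. Strong proximinality gives $\de>0$ with $P_{L_p(I,Y)}(g,\de)\ci P_{L_p(I,Y)}(g)+\e B_{L_p(I,X)}$. Now if $y\in P_Y(x,\de)$ then the constant function $h\equiv y$ satisfies $\|g-h\|_p=\|x-y\|\le d(x,Y)+\de=d(g,L_p(I,Y))+\de$ (using Theorem~\ref{T1}(a)), so $h\in P_{L_p(I,Y)}(g,\de)$. Hence $h=h_0+e$ with $h_0\in P_{L_p(I,Y)}(g)$ and $\|e\|_p\le\e$. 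By Theorem~\ref{T1}(c), $h_0(t)\in P_Y(x)$ a.e., and $\int_I\|y-h_0(t)\|^p\,dm(t)\le\e^p$, so for a suitable choice of $t$ we get $\|y-h_0(t)\|\le\e$ — but more carefully, I want to land a \emph{single} point of $P_Y(x)$ within $\e$ of $y$; since $\|y-h_0(\cdot)\|_p\le\e$ and $h_0(t)\in P_Y(x)$ a.e., there is at least one such $t$, giving $y\in P_Y(x)+\e B_X$. Thus $Y$ is strongly proximinal in $X$.

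\emph{The main direction.} Suppose $Y$ is separable, proximinal, and strongly proximinal in $X$; then $P_Y$ is uHsc, so Theorem~\ref{L3} applies. Fix $g\in L_p(I,X)$, and set $f_0\in P_{L_p(I,Y)}(g)$ arbitrary (it exists by Theorem~\ref{T1}(b)); I must produce, for each $\e>0$, a $\de>0$ with $P_{L_p(I,Y)}(g,\de)\ci P_{L_p(I,Y)}(g)+\e B_{L_p(I,X)}$. The idea is to integrate the local moduli. For $x\in X$ and $\eta>0$ let $\de_Y(x,\eta)$ be a witness for strong proximinality of $Y$ at $x$, i.e.\ $P_Y(x,\de_Y(x,\eta))\ci P_Y(x)+\eta B_X$. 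Given $h\in P_{L_p(I,Y)}(g,\de)$, one has $\|g-h\|_p^p\le (d(g,L_p(I,Y))+\de)^p$, and by Theorem~\ref{T1}(a) $d(g,L_p(I,Y))^p=\int_I d(g(t),Y)^p\,dm(t)$; so $\int_I\bigl(\|g(t)-h(t)\|^p-d(g(t),Y)^p\bigr)dm(t)$ is small (bounded by roughly $p(d(g,L_p(I,Y))+\de)^{p-1}\de$). Hence, outside a set of small measure, $h(t)\in P_Y(g(t),\de_Y(g(t),\e/2))$, which puts $h(t)$ within $\e/2$ of $P_Y(g(t))$; using a measurable selection (Lemma~\ref{L2}, Theorem~\ref{T2}) of $t\mapsto P_{P_Y(g(t))}(h(t),\cdot)$ I build $h_0\in L_p(I,Y)$ with $h_0(t)\in P_Y(g(t))$ a.e., hence $h_0\in P_{L_p(I,Y)}(g)$ by Theorem~\ref{T1}(c), and $\|h-h_0\|_p$ controlled.

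\emph{The main obstacle.} The delicate point is that $\de_Y(x,\eta)$ depends on $x$ in an uncontrolled way, so I cannot simply pick one $\de$ working for all $g(t)$; the set $\{t: \de_Y(g(t),\e/2)<\de\}$ may have positive measure for every $\de>0$. The fix is a measure-theoretic argument: on a set where $\|g(t)-h(t)\|^p-d(g(t),Y)^p$ is \emph{not} small, $h(t)$ need not be close to $P_Y(g(t))$, but that set has small measure; there I replace $h$ by \emph{any} measurable selection of $P_Y(g(\cdot))$ (which exists by separability and the selection theorem) and absorb the (bounded) error into $\e$ using that the integrand is dominated — here I need the simple estimate $\|h(t)-(\text{selection})\|\le \|h(t)-g(t)\|+d(g(t),Y)\le 2\|g(t)-h(t)\|$, which is $L_p$, so Chebyshev plus absolute continuity of the integral controls the contribution of the bad set. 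Combining the good-set estimate (pointwise within $\e/2$) and the bad-set estimate (small measure, dominated integrand) and choosing $\de$ small enough yields $\|h-h_0\|_p\le\e$, completing the proof.
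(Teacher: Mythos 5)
Your ``easy direction'' (constant functions plus Theorem~\ref{T1}(a),(c)) is correct; the paper does not even write it out. The genuine gap is in the main direction, and it sits exactly at the point you yourself label the main obstacle: the proposed fix does not resolve it. You split $I$ according to whether the excess $\|g(t)-h(t)\|^p-d(g(t),Y)^p$ is ``small'' or ``not small''. If ``not small'' means exceeding a fixed threshold $\tau>0$, Chebyshev does give a bad set of small measure, but on the complementary good set the claim ``$h(t)$ lies within $\e/2$ of $P_Y(g(t))$'' is unjustified: it needs the excess to be below $\de_Y(g(t),\e/2)$, and, as you observed, $\{t:\de_Y(g(t),\e/2)<\tau\}$ may have positive measure for every $\tau>0$, no matter how small $\de$ is taken. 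If instead ``small'' means below the $t$-dependent threshold $\de_Y(g(t),\e/2)$, then Chebyshev gives no control on the measure of the bad set, since that threshold is not bounded away from $0$ (and its measurability in $t$ is not addressed either). So the argument is circular as written; the bad-set mechanism (measurable selection of $P_Y(g(\cdot))$, the bound $\|h(t)-h_0(t)\|\le 2\|g(t)-h(t)\|$, absolute continuity of the integral of $d(g(\cdot),Y)^p$) is fine, but it is applied to a set whose measure has not been controlled.

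What is missing is a uniformization step performed before $\de$ is chosen, e.g.\ $A_n=\{t: P_Y(g(t),\tfrac1n)\ci P_Y(g(t))+\tfrac{\e}{2}B_X\}$: these sets increase to $I$ (a.e.) by strong proximinality of $Y$, so--granted their measurability, which itself requires an argument in the spirit of Lemma~\ref{L1}--one fixes $n_0$ with $m(I\sm A_{n_0})$ small and then runs Chebyshev on $A_{n_0}$ with the constant threshold $\tfrac1{n_0}$, using $\|g(t)-h(t)\|^p-d(g(t),Y)^p\ge\bigl(\|g(t)-h(t)\|-d(g(t),Y)\bigr)^p$. The paper avoids all of this by arguing sequentially, by contradiction: if $\|f-g_n\|_p\to d(f,L_p(I,Y))$ while $d(g_n,P_{L_p(I,Y)}(f))\ge\e$, it extracts a subsequence along which the pointwise excess tends to $0$ a.e., uses strong proximinality of $Y$ at each point to get $d(g_{n_k}(t),P_Y(f(t)))\to 0$ a.e., applies dominated convergence, and then the distance formula of Theorem~\ref{L3} converts $\|d(g_{n_k}(\cdot),P_Y(f(\cdot)))\|_p\to 0$ into $d(g_{n_k},P_{L_p(I,Y)}(f))\to 0$, a contradiction; no measurable modulus is ever needed. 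Either switch to that sequential scheme, or supply the $A_n$ exhaustion together with its measurability to complete your direct, quantifier-form argument.
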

\begin{proof}
Let $Y$ be strongly proximinal in $X$ and let for some $p\in [1,\iy)$, $L_p(I,Y)$ be not strongly proximinal in 
$L_p(I,X)$. 
Hence there exists $f\in L_p(I,X), \e>0$ and $(g_n)\ci L_p(I,Y)$ such that 
$\|f-g_n\|_p\to d(f,L_p(I,Y))$ but $d(g_n,P_{L_p(I,Y)}(f))\geq\e$.

$ \mbox{Now } \|f-g_n\|_p\to d(f,L_p(Y)) $

$ \Longrightarrow \int_I\|f(t)-g_n(t)\|^pdm(t)\to \int_Id(f(t),Y)^pdm(t)$.

$ \Longrightarrow \int_I\left|\|f(t)-g_n(t)\|^p-d(f(t),Y)^p\right|dm(t)\to 0$.

A well known property of $L_p$ convergence ensures that there exists a subsequence $(g_{n_k})$ satisfying 
$\|f(t)-g_{n_k}(t)\|^p-d(f(t),Y)^p\to 0$ a.e.

Since $\|f(t)-g_{n_k}(t)\|\to d(f(t),Y)$ a.e. we have $d(g_{n_k}(t),P_Y(f(t)))\to 0$ a.e. Since $d(g_{n_k}(t),P_Y(f(t)))^p\leq 2\|f(t)\|^p$, a $L_1$ function. Hence by Dominated Converge Theorem, $\lim_{k\to 0}\int_I d(g_{n_k}(t),P_Y(f(t)))^pdm(t)= 0$, contradicting our assumption on $(g_n)$. Hence the result follows.
\end{proof}

Since all $g_n$'s in the above proof are separably valued the above proof can be fitted with all such strongly proximinal $Y$ of which all its separable subspaces are also strongly proximinal.

\begin{cor}\label{C2}
Let $Y$ be a stronly proximinal subspace of $X$. If every separable subspace of $Y$ is strongly proximinal in $X$ then $L_p(I,Y)$ is strongly proximinal in $L_p(I,X)$.
\end{cor}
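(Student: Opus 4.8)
The plan is to mirror the proof of Theorem~\ref{T6}, the one new ingredient being a reduction of the (possibly non-separable) $Y$ to a separable subspace on which the earlier machinery applies. Fix $p\in[1,\iy)$ and suppose, for contradiction, that $L_p(I,Y)$ is not strongly proximinal in $L_p(I,X)$; exactly as in Theorem~\ref{T6}, this produces $f\in L_p(I,X)$, $\e>0$ and a sequence $(g_n)\ci L_p(I,Y)$ with $\|f-g_n\|_p\ra d(f,L_p(I,Y))$ but $d\big(g_n,P_{L_p(I,Y)}(f)\big)\ge\e$ for every $n$ (if $L_p(I,Y)$ is not even proximinal, read $d(\,\cdot\,,\es)=\iy$, and such a minimizing sequence still exists).

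Each $g_n$ is Bochner $p$-integrable, hence essentially separably valued; so, after modifying each $g_n$ on a null set, we may assume that all the $g_n$ take values in one fixed separable subspace $Y_0$ of $Y$ (e.g.\ the closed linear span of the union of their ranges). By hypothesis $Y_0$ is strongly proximinal in $X$; thus $P_{Y_0}$ is uHsc, and Theorems~\ref{T1}(a) and \ref{L3} are available for $Y_0$. Two bookkeeping facts let us work with $Y_0$ instead of $Y$: (i) since $L_p(I,Y_0)\ci L_p(I,Y)$ we have $d(f,L_p(I,Y))\le d(f,L_p(I,Y_0))$, while $g_n\in L_p(I,Y_0)$ gives $d(f,L_p(I,Y_0))\le\|f-g_n\|_p\ra d(f,L_p(I,Y))$, so these distances coincide and $(g_n)$ is minimizing for $L_p(I,Y_0)$ as well; (ii) $P_{L_p(I,Y_0)}(f)\ci P_{L_p(I,Y)}(f)$, because any $h\in L_p(I,Y_0)\ci L_p(I,Y)$ with $\|f-h\|_p=d(f,L_p(I,Y_0))=d(f,L_p(I,Y))$ attains the distance to $L_p(I,Y)$.

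The rest is the proof of Theorem~\ref{T6} run with $Y_0$ in place of $Y$. By Theorem~\ref{T1}(a), $\int_I\left|\,\|f(t)-g_n(t)\|^p-d(f(t),Y_0)^p\,\right|dm(t)\ra0$, so passing to a subsequence we get $\|f(t)-g_{n_k}(t)\|\ra d(f(t),Y_0)$ a.e.; strong proximinality of $Y_0$ then forces $d\big(g_{n_k}(t),P_{Y_0}(f(t))\big)\ra0$ a.e. Since $d\big(g_{n_k}(t),P_{Y_0}(f(t))\big)^p\le 2^{p-1}\big(\|f(t)-g_{n_k}(t)\|^p+d(f(t),Y_0)^p\big)$, whose right side converges in $L_1$ and is hence uniformly integrable, the dominated convergence (Vitali) theorem yields $\int_I d\big(g_{n_k}(t),P_{Y_0}(f(t))\big)^p dm(t)\ra0$. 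Applying Theorem~\ref{L3} to $Y_0$ (with $g_{n_k}\in L_p(I,Y_0)$ and $f\in L_p(I,X)$), the left side equals $d\big(g_{n_k},P_{L_p(I,Y_0)}(f)\big)^p$; so by (ii), $d\big(g_{n_k},P_{L_p(I,Y)}(f)\big)\ra0$, contradicting $d\big(g_n,P_{L_p(I,Y)}(f)\big)\ge\e$.

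The only genuinely new step over Theorem~\ref{T6} is the reduction to $Y_0$, and the point requiring care is precisely that a \emph{single} separable $Y_0$ can be chosen to carry (a.e.) the ranges of all the $g_n$ simultaneously — this is where essential separable-valuedness of Bochner integrable functions enters — together with the verification that this substitution neither increases the attained distance nor enlarges the metric projection, i.e.\ facts (i) and (ii). Everything after that is the verbatim computation of Theorem~\ref{T6}, so I anticipate no further obstacle, and the argument delivers the corollary for $1\le p<\iy$.
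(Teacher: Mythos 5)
Your proposal is correct and follows essentially the same route as the paper: the paper's proof likewise observes that the minimizing sequence $(g_n)$ takes values (a.e.) in a single separable subspace $Z\subseteq Y$ with $d(f,L_p(I,Y))=d(f,L_p(I,Z))$, invokes the hypothesis so that Theorem~\ref{T6} applies to $Z$ to get $d(g_n,P_{L_p(I,Z)}(f))\to 0$, and concludes via $P_{L_p(I,Z)}(f)\subseteq P_{L_p(I,Y)}(f)$. The only difference is that you re-run the proof of Theorem~\ref{T6} for $Y_0$ (with a Vitali/uniform-integrability domination) rather than simply citing it, which changes nothing essential.
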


\begin{proof}
 For such type of $(g_n)$ defined above get a separable subspace $Z\ci Y$ such that $d(f,L_p(I,Y))=d(f,L_p(I,Z))$, $1\leq p\leq\iy$. From our assumption and Theorem~\ref{T6} it follows $d(g_n,P_{L_p(I,Z)}(f))\ra 0$ and hence $d(g_n,P_{L_p(I,Y)}(f))\ra 0$.
\end{proof}

\begin{rem}
In general the conclusion of the Theorem~\ref{T6} is not true for $p=\iy$, Example~\ref{E2}. In next Section we show that a stronger version of strong proximinality of $L_p(I,Y)$ in $L_p(I,X)$ can be achieved from the similar assumption of $Y$ in $X$ and also vice versa.
\end{rem}

We now show that strong proximinality of $L_\iy(I,Y)$ in $L_\iy(I,X)$ demands a stronger assumption on $Y$ in $X$.

From Michael's selection theorem (see \cite[Theorem~3.1$^\prime$]{EM}) it is clear that if $Y$ is a finite dimensional subspace of a normed linear space $X$ and the metric projection $P_Y$ is lsc then it has a continuous selection. Now in \cite[Example~2.5]{AL} the author has shown that there exists a 1 dimensional subspace $Y$ in the 3 dimensional space $\mb{R}^3$ with a suitable norm where the metric projection $P_Y$ has no continuous selection. Hence it can not be lsc, and being a compact valued map $P_Y$ is not also lHsc. We now use these observations in the following example for the subspace $Y$ and the corresponding metric projection $P_Y$ to derive the non stability behavior of $L_\iy(I,Y)$ in $L_\iy(I,X)$ in the context of strong proximinality.

\begin{ex}\label{E2}
If $Y$ is strongly proximinal in $X$ then $L_\iy(I,Y)$ not necessarily strongly proximinal in $L_\iy(I,X)$~: Let $X$ and $Y$ be the spaces defined in \cite[Example~2.5]{AL}. Then there exists a sequence $(x_n)\ci X, x\in X$ such that $x_n\to x$ but $P_Y(x)\nsubseteq P_Y(x_n)+\e B_Y$ for some $\e>0$. Define $z_n=\frac{x_n}{d(x_n,Y)}, z_0=\frac{x}{d(x,Y)}$. Then $z_n\to z_0$ and $d(z_n,Y)=1=d(z_0,Y)$. Also we have,
\[
 d(x,Y)P_Y(z_0)\nsubseteq d(x_n,Y)P_Y(z_n)+\e B_Y, ~\mbox{for all~}n\in\mb{N}.
\]

That is there exists $y_n\in P_Y(z_0)$ such that $d\left((d(x,Y),d(x_n,Y)P_Y(z_n)\right)\geq\e$ and hence $d(y_n,\al_n P_Y(z_n))\geq\eta$ where $\al_n\to 1$ and some $\eta>0$. 

It is clear that $\left|\|y_n-z_n\|-d(z_n,Y)\right|\to 0$. Let $(I_n)$ be a sequence of pairwise disjoint intervals with $\cup_nI_n=I$. 

Define $f\in L_\iy(I,X), g_k\in L_\iy(I,Y)$ with $f|_{I_n}=z_n, g_k|_{I_n}=y_n$ if $k=n$ otherwise $g_k|_{I_n}\ci P_Y(z_k)$. Clearly we have $\|f-g_k\|_\iy\to d(f,L_\iy(I,Y))$ but $d(g_k,P_{L_\iy(I,Y)}(f))\geq\eta$, for all but finitely many $k$'s. The last inequality follows from the fact that, 
\[
P_{L_\iy(I,Y)}(f)=\{h\in L_\iy(I,Y):h|_{I_n}\ci P_Y(z_n),\mbox{ for all }n\}.
\]
\end{ex}

\begin{rem}\label{R3}
From above example it is clear if $L_\iy(I,Y)$ is strongly proximinal in $L_\iy(I,X)$ then $P_Y$ must be Hausdorff continuous.
\end{rem}

We conclude this Section by an application of Theorem~\ref{T1}. The scalar field for the Banach spaces considered in rest of this Section is $\mb{R}$.

The following result, Theorem~\ref{T4}, concludes about strong proximinality of $L_\iy(I,Y)$ in $L_\iy(I,X)$. It is also a strengthening of \cite[Theorem~15]{Rao}  which was proved for strong $1\frac{1}{2}$ ball property. Before we go for Theorem~\ref{T4} here is a 
useful characterization of $1\frac{1}{2}$ ball property.

\begin{thm}\label{T3}\cite{Go}
For a subspace $Y$ of $X$, \tFAE. \bla
\item $Y$ has $1\frac{1}{2}$ ball property.
\item $\|x-y\|=d(x,Y)+d(y,P_Y(x))$, for $x$ in $X$ and $y\in Y$.
\item $\|x\|=d(x,Y)+d(0,P_Y(x))$, for $x\in X$.
\el
\end{thm}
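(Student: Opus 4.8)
The plan is to prove the chain $(a)\Rightarrow(b)\Rightarrow(c)\Rightarrow(a)$, which is the natural route since $(c)$ is the special case $y=0$ of $(b)$. Throughout, the easy inequality $\|x-y\|\leq d(x,Y)+d(y,P_Y(x))$ holds trivially: pick any $z\in P_Y(x)$ and use $\|x-y\|\leq\|x-z\|+\|z-y\|=d(x,Y)+\|z-y\|$, then take the infimum over $z\in P_Y(x)$. So in each of $(a)\Rightarrow(b)$ and later we only need the reverse inequality $\|x-y\|\geq d(x,Y)+d(y,P_Y(x))$; note first that this forces $Y$ to be proximinal (so $P_Y(x)\neq\es$), which is a standard consequence of the $1\frac12$ ball property (take $s$ large and $y$ arbitrary in Definition~\ref{D4}, or rather take $r=d(x,Y)+\tfrac1n$ and intersect the resulting balls).

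For $(a)\Rightarrow(b)$: fix $x\in X$, $y\in Y$, and set $r=d(x,Y)$ and $s=\|x-y\|-d(y,P_Y(x))$; I want to show $d(y,P_Y(x))\leq s$, equivalently that $B[y,s]\cap P_Y(x)\neq\es$. If $s\geq\|x-y\|$ there is nothing to do (the easy direction already gives equality when $d(y,P_Y(x))\leq 0$, impossible unless $y\in P_Y(x)$), so assume $0<s<\|x-y\|$. For each $\eta>0$ apply the $1\frac12$ ball property with radii $r$ and $s+\eta$: since $\|x-y\|<r+s+\eta$ would need checking — and here is the subtle point — we do \emph{not} in general have $\|x-y\|< d(x,Y)+s$; indeed $\|x-y\| = d(x,Y)+s$ only if $d(y,P_Y(x))=\|x-y\|-d(x,Y)$, which is what we are trying to prove. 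So instead the argument must run: we do not know $s=\|x-y\|-d(y,P_Y(x))$ a priori; rather, set $r=d(x,Y)$, let $s>0$ be arbitrary with $r+s>\|x-y\|$, and conclude from $(a)$ that $B[x,r]\cap B[y,s]\cap Y\neq\es$ (the hypothesis $B[x,r]\cap Y\neq\es$ needs $P_Y(x)\neq\es$, which we noted, but actually with $r=d(x,Y)+\tfrac1n$ it is automatic). A point in this intersection lies in $Y$, is within $r$ of $x$ (hence, letting the slack go to $0$, in $P_Y(x)$ after a limiting/compactness-free argument via nested balls), and within $s$ of $y$. Taking the infimum over admissible $s$ gives $d(y,P_Y(x))\leq \|x-y\|-d(x,Y)$, which is the reverse inequality. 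The limiting step — producing an element of $P_Y(x)\cap B[y,s]$ from elements of $(B[x,d(x,Y)+\tfrac1n]\cap B[y,s+\tfrac1n]\cap Y)$ without compactness — is handled by applying the $1\frac12$ ball property once more, or by observing the sets are nested closed balls intersected with $Y$ and using proximinality directly; this is the step I expect to require the most care.

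For $(b)\Rightarrow(c)$: immediate, put $y=0$ in $(b)$, using $d(0,Y)=0$ and $\|x-0\|=\|x\|$. For $(c)\Rightarrow(a)$: assume $\|x\|=d(x,Y)+d(0,P_Y(x))$ for all $x\in X$. Given $y\in Y$, $x\in X$, $r,s$ with $\|x-y\|<r+s$ and $B[x,r]\cap Y\neq\es$ (so $d(x,Y)\leq r$), I want $B[x,r]\cap B[y,s]\cap Y\neq\es$. Apply $(c)$ to the vector $x-y$ (noting $d(x-y,Y)=d(x,Y)$ and $P_Y(x-y)=P_Y(x)-y$): this gives $\|x-y\|=d(x,Y)+d(y,P_Y(x))$. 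Hence $d(y,P_Y(x))=\|x-y\|-d(x,Y)<r+s-d(x,Y)\le s + (r - d(x,Y))$; more simply, since $d(x,Y)\le r$ we get $d(y,P_Y(x))<s+(r-d(x,Y))$, and I then need to select $z\in P_Y(x)$ with both $\|z-x\|\le r$ (automatic, as $z\in P_Y(x)$ means $\|z-x\|=d(x,Y)\le r$) and $\|z-y\|\le s$. The existence of such $z$ is exactly the claim that $d(y,P_Y(x))\le s$, which we have only as a strict-ish bound involving the slack $r-d(x,Y)$; to close the gap one runs the standard $1\frac12$-ball bootstrap: the set $P_Y(x)$, being $\{z\in Y:\|z-x\|=d(x,Y)\}$, together with the bound just derived, yields a point within $s$ of $y$ by a direct infimum argument when $d(x,Y)=r$, and the general case $d(x,Y)<r$ follows since then there is even more room. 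Assembling these gives $z\in B[x,r]\cap B[y,s]\cap Y$, completing the cycle. The one genuine obstacle, as flagged, is the passage from approximate to exact best approximants in $(a)\Rightarrow(b)$; everything else is bookkeeping with the triangle inequality and the translation-equivariance $P_Y(x-y)=P_Y(x)-y$.
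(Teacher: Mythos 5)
The paper itself does not prove Theorem~\ref{T3} --- it is quoted from Godini \cite{Go} --- so your attempt can only be judged against the standard argument. Your cycle $(a)\Rightarrow(b)\Rightarrow(c)\Rightarrow(a)$ is the right shape. For $(a)\Rightarrow(b)$ you are essentially done once you commit to the clean route you half-describe: the $1\frac{1}{2}$ ball property implies proximinality (well known, and used freely in the paper), so $B[x,d(x,Y)]\cap Y=P_Y(x)\neq\emptyset$ and you may apply Definition~\ref{D4} with $r=d(x,Y)$ exactly and any $s>\|x-y\|-d(x,Y)$; the point produced already lies in $P_Y(x)\cap B[y,s]$, so the ``limiting/compactness-free argument via nested balls'' you flag as delicate is simply not needed. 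The step $(b)\Rightarrow(c)$ and the translation identity $P_Y(x-y)=P_Y(x)-y$ used to get $(c)\Rightarrow(b)$ are fine.

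The genuine gap is in $(c)\Rightarrow(a)$ when $d(x,Y)<r$. From $(b)$ you get $d(y,P_Y(x))=\|x-y\|-d(x,Y)$, and when $\|x-y\|\geq d(x,Y)+s$ --- which is compatible with $\|x-y\|<r+s$ precisely because $d(x,Y)<r$ --- this quantity is $\geq s$: no point of $P_Y(x)$ is within $s$ of $y$ (and the infimum need not be attained anyway). So ``the general case $d(x,Y)<r$ follows since then there is even more room'' is backwards: the required witness must lie \emph{outside} $P_Y(x)$, at distance strictly between $d(x,Y)$ and $r$ from $x$, and no argument confined to $P_Y(x)$ can produce it. The missing idea is a convexity (segment) argument trading the slack $r-d(x,Y)$: writing $d=d(x,Y)$, and disposing of the trivial cases $\|x-y\|\leq r$ or $y\in P_Y(x)$ by taking $z=y$, pick $z_0\in P_Y(x)$ with $\|z_0-y\|<\|x-y\|-d+\epsilon$, set $\lambda=(r-d)/(\|x-y\|-d)\in[0,1)$ and $z=\lambda y+(1-\lambda)z_0\in Y$. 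Then $\|z-x\|\leq\lambda\|x-y\|+(1-\lambda)d=r$, while $\|z-y\|=(1-\lambda)\|z_0-y\|<\|x-y\|-r+\epsilon\leq s$ for $\epsilon$ small, since $\|x-y\|-r<s$. With this insertion the implication (and hence the theorem) is proved; without it, $(c)\Rightarrow(a)$ does not go through.
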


\begin{thm}\label{T4}
A separable subspace $Y$ of $X$ has $1\frac{1}{2}$ ball property if and only if $L_1(I,Y) (L_\iy(I,Y))$ has $1\frac{1}{2}$ ball property in $L_1(I,X) (L_\iy(I,X))$.
\end{thm}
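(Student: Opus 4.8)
\textbf{Proof proposal for Theorem~\ref{T4}.} The plan is to use the characterization in Theorem~\ref{T3}, namely that $Y$ has the $1\frac{1}{2}$ ball property if and only if $\|x-y\| = d(x,Y) + d(y,P_Y(x))$ for all $x \in X$, $y \in Y$. So for each of $p = 1, \iy$ I want to establish the analogous identity $\|f - g\|_p = d(f, L_p(I,Y)) + d(g, P_{L_p(I,Y)}(f))$ for $f \in L_p(I,X)$, $g \in L_p(I,Y)$. One direction is essentially free: if the function space identity holds, then by restricting to constant functions (exploiting that $m$ has total mass $1$, so constants embed isometrically) we recover the pointwise identity, hence $Y$ has the $1\frac{1}{2}$ ball property by Theorem~\ref{T3}. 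Conversely, assume $Y \ci X$ has the $1\frac{1}{2}$ ball property.

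For the forward direction with $p = 1$, I would argue pointwise and integrate. Since $Y$ has the $1\frac{1}{2}$ ball property, for a.e.\ $t$ we have $\|f(t) - g(t)\| = d(f(t),Y) + d(g(t), P_Y(f(t)))$. Note the $1\frac{1}{2}$ ball property implies strong proximinality, hence $P_Y$ is uHsc, so Lemma~\ref{L1}(b) guarantees $t \mapsto d(g(t), P_Y(f(t)))$ is measurable and the integral is legitimate. Integrating and using Theorem~\ref{T1}(a) for the first term and Theorem~\ref{L3} (with $p=1$) for the second gives
\[
\|f - g\|_1 = \int_I \|f(t) - g(t)\|\, dm(t) = \|d(f(\cdot),Y)\|_1 + \|d(g(\cdot), P_Y(f(\cdot)))\|_1 = d(f, L_1(I,Y)) + d(g, P_{L_1(I,Y)}(f)),
\]
which is condition (b) of Theorem~\ref{T3} for $L_1(I,Y)$ in $L_1(I,X)$, so we are done in this case.

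For $p = \iy$ the same pointwise identity holds a.e., and taking essential suprema gives $\|f-g\|_\iy = \operatorname{ess\,sup}_t\big(d(f(t),Y) + d(g(t),P_Y(f(t)))\big)$. The inequality $\|f-g\|_\iy \le d(f,L_\iy(I,Y)) + d(g, P_{L_\iy(I,Y)}(f))$ always holds (triangle inequality, once we know the relevant sets are nonempty --- proximinality of $L_\iy(I,Y)$ follows from Theorem~\ref{T1}(b) since $Y$ is separable and proximinal). For the reverse inequality I need $\operatorname{ess\,sup}_t(d(f(t),Y)+d(g(t),P_Y(f(t)))) \ge \|d(f(\cdot),Y)\|_\iy + \operatorname{ess\,sup}_t d(g(t),P_Y(f(t)))$, and here lies the main obstacle: the essential supremum of a sum need not equal the sum of essential suprema. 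I would handle this by going back to the defining inequality of Definition~\ref{D4} directly rather than through the additive formula --- given $f \in L_\iy(I,X)$, $g \in L_\iy(I,Y)$ and radii $r, s$ with $\|f - g\|_\iy < r + s$ and $B[f,r] \cap L_\iy(I,Y) \ne \es$ (equivalently $\|d(f(\cdot),Y)\|_\iy \le r$), I want to produce $h \in L_\iy(I,Y)$ with $\|f - h\|_\iy \le r$ and $\|g - h\|_\iy \le s$. Pointwise, $\|f(t) - g(t)\| < r + s$ a.e., $d(f(t),Y) \le r$ a.e., so the $1\frac{1}{2}$ ball property of $Y$ gives, for a.e.\ $t$, a nonempty set $\{y \in Y : \|f(t) - y\| \le r,\ \|g(t) - y\| \le s\}$; realizing this via a measurable selection (Theorem~\ref{T2}, exactly as in Lemma~\ref{L2}, after checking the graph of $t \mapsto B[f(t),r] \cap B[g(t),s] \cap Y$ is measurable, using separability of $Y$ and measurability of $f,g$) yields the desired $h$, and one checks $h \in L_\iy(I,Y)$ since it is bounded and measurable. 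This establishes the $1\frac{1}{2}$ ball property of $L_\iy(I,Y)$ in $L_\iy(I,X)$ directly from the definition. The measurable-selection bookkeeping for the $p=\iy$ case --- ensuring the strict inequality $< r+s$ can be used on a full-measure set and that the selected ball intersection is taken with uniform radii --- is the step requiring the most care.
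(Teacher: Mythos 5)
Your proposal is correct, and for the converse and for $p=1$ it coincides with the paper's argument: the paper also integrates the pointwise identity of Theorem~\ref{T3} (it uses version $(c)$ with $y=0$, you use $(b)$; immaterial) and invokes Theorem~\ref{T1} and Theorem~\ref{L3}, and it also handles the converse by constant functions. Where you genuinely diverge is the forward direction for $p=\infty$: the paper simply takes essential suprema in the identity $\|f(t)\|=d(f(t),Y)+d(0,P_Y(f(t)))$ and cites Remark~\ref{R2} to claim $\|f\|_\infty\ge d(f,L_\infty(I,Y))+d(0,P_{L_\infty(I,Y)}(f))$; as you observed, the essential supremum of a sum only gives an upper bound, and Remark~\ref{R2} points the same way, so the paper's one-line step is terse at exactly the point you flagged --- to make it airtight one essentially has to produce, for each $\varepsilon>0$, a near-best approximant $h$ with $\|f(t)-h(t)\|\le \|d(f(\cdot),Y)\|_\infty$ and $\|h(t)\|\le\|f\|_\infty-\|d(f(\cdot),Y)\|_\infty+\varepsilon$ a.e., which is again a pointwise ball-intersection plus measurable selection. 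Your alternative --- verifying Definition~\ref{D4} for $L_\infty(I,Y)$ directly by selecting measurably from $t\mapsto B[f(t),r]\cap B[g(t),s]\cap Y$ via Theorem~\ref{T2} --- is exactly the technique of Lemma~\ref{L2} and of the implication $(a)\Rightarrow(b)$ in Theorem~\ref{T14}, and it cleanly sidesteps the ess-sup issue (the graph is measurable because $(t,y)\mapsto\|f(t)-y\|$ is Carath\'eodory and $Y$ is separable, and the selection is in $L_\infty$ since $\|h(t)\|\le\|g(t)\|+s$). In short: the paper's route stays entirely inside the distance-formula framework and is shorter on the page, while yours is more careful for $p=\infty$ and reuses the selection machinery the paper itself deploys later; both are legitimate, and your version arguably repairs the weakest step of the published proof.
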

\begin{proof} 
Suppose $Y$ has $1\frac{1}{2}$ ball property in $X$. We only show that the distance formula in Theorem~\ref{T3}$(c)$ holds for any $f\in L_1(I,X)$. Now $\|f(t)\|=d(f(t),Y)+d(0,P_Y(f(t)))$ a.e. For $p=1$, we get the result by integrating both sides and use the distance formulas discussed in Theorem~\ref{T1}, \ref{L3}. For $p=\iy$ we take the essential supremum in both sides and use the Remark~\ref{R2} and get $\|f\|_\iy\geq d(f,L_\iy(I,Y))+d(0,P_{L_\iy(I,Y)}(f))$. The other inequality is obvious.

Conversely, for any $x\in X$ consider the constant function $f(t)=x$ for all $t\in I$. The result now follows from Theorem~\ref{T3} and \ref{L3}.
\end{proof}

\section{Uniform proximinality of $L_p(I,Y)$ in $L_p(I,X)$}\label{S3}

In a recent paper (\cite{LZ}) the authors has introduced the notion {\it uniform proximinality} and it is claimed that closed unit ball of any uniformly convex space is uniformly proximinal. We first observe that the property does not holds even for the $2$ dimensional Euclidean space.

\begin{ex}\label{E5}
Let $C$ be the closed unit ball of $(\mb{R}^2,\|.\|_2)$, $x=(2,0)$. Then 
$P_C((2,0))=\{(1,0)\}$. Let $\al=2$ and $\e=1/2$. Then there does not exist 
$\de>0$ satisfying the condition in \cite{LZ}, pg 79, which makes $C$ 
{\it uniformly proximinal}. In fact, if such a $\de>0$ exists then 
$\|(0,0)-(2,0)\|<\al+\de$ but $\|(0,0)-(1,0)\|>\e$.
\end{ex}

We now define a stronger version of proximinality, viz. {\it uniform proximinality} which is in fact stated in \cite{PN} in the context of centres of closed bounded sets.

\bdfn\label{D2}
Let $C$ be a closed convex subset of $X$. We call $C$ is uniformly proximinal if given $\e>0$ and $R>0$ there exists $\de(\e,R)>0$ such that for any $x\in X, d(x,C)\leq R$ and $y\in C$ with $\|x-y\|<R+\de$, there exists $y^\prime\in C$ with $\|y-y^\prime\|<\e$ and $\|x-y^\prime\|\leq R$.
\edfn

Here are some examples of uniformly proximinal sets.
\begin{ex}\label{E1}
 \bla
\item It is clear that a Banach space $X$ having $3.2.I.P.$, 

$B_X (B_{L_\iy(I,X)})$ is uniformly proximinal in $X (L_\iy(I,X))$.
\item \cite[Proposition~3.5]{PN} Any $w^*$-closed convex subset of $\ell_1$ is uniformly proximinal.
\item \cite[Proposition~3.7]{PN} Any closed convex proximinal subset of a LUR space is uniformly proximinal.
\item Any subspace $Y$ of $X$ having $1\frac{1}{2}$ ball 
property is uniformly proximinal:~ Let $R, \e>0$ such that $d(x,Y)\leq R$ and $\|x-y\|<R+\e$ for some $y\in Y$, from the Definition~\ref{D4} we have $B[x,R]\cap B[y,\e]\cap Y\neq\es$. Any point from this intersection solve our purpose. 
\item \cite{Lau} Any subspace $Y$ of $X$ which is U-proximinal is also 
uniformly proximinal:~ Let $\eta, R>0$, suppose $\e:\mb{R}\to\mb{R}$ be the continuous function corresponding to the 
subspace $Y$ in \cite{Lau}. Get $\theta>0$ satisfying $\e(\theta)<\eta/R$, let 
$\de=R \theta$. Let $x\in X$ such that $d(x,Y)\leq R$ and $y\in Y$ be such that $\|x-y\|<R+\de$.

{\sc Claim:~}There exists $y^\prime\in Y$ such that $\|y-y^\prime\|<\eta$ and 
$\|x-y^\prime\|\leq R$.

Now $d(\frac{x}{R},Y)\leq 1$ and $\|\frac{x}{R}-\frac{y}{R}\|<1+\theta$, in 
other words $\frac{x}{R}\in Y+B_X$ and $\frac{x}{R}-\frac{y}{R}\in 
(1+\theta)B_X$ and hence $\frac{x}{R}-\frac{y}{R}\in Y+B_X$. And finally there 
exists $y_1\in \e(\theta)B_Y$ such that $\|\frac{x}{R}-\frac{y}{R}-y_1\|\leq 1$. 
Define $y^\prime=y+Ry_1$, this $y^\prime$ satisfies the desired requirements.
 \el
\end{ex}

We refer \cite{PN} to the reader for many other interesting uniformly proximinal 
subsets of Banach spaces. 

\begin{rem}\label{R9}
\bla
\item In the Definition~\ref{D2} if we demand to have $\de=\e$ for all $R>0$ we get back $1\frac{1}{2}$ ball property.
\item From the Definition~\ref{D2} it is clear that uniform proximinality  
of $C$ forces the set to be strongly proximinal. 
\item From the example by Godefroy in \cite{La} it is clear that the closed unit ball of a Banach space not necessarily have uniformly proximinal property.
\el
\end{rem}

We now claim that converse of Remark~\ref{R9}$(b)$ is not true. First observe the following.

\begin{prop}\label{P1}
If a closed convex set $C$ in $X$ is uniformly proximinal then the metric projection $P_C:X\to 2^C$ is continuous in the Hausdorff metric.
\end{prop}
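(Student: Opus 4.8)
By definition a set-valued map is Hausdorff continuous exactly when it is both uHsc and lHsc, so the proof splits into two containments, the first routine and the second carrying the content.

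For upper Hausdorff semicontinuity: by Remark~\ref{R9}(b) uniform proximinality forces $C$ to be strongly proximinal, and the standard argument then yields that $P_C$ is uHsc. Indeed, given $x_0$ and $\e>0$, choose $\de_0>0$ with $P_C(x_0,\de_0)\ci P_C(x_0)+\e B_X$; for $w$ with $\|w-x_0\|<\de_0/2$ and any $z\in P_C(w)$ one has $\|x_0-z\|\le\|x_0-w\|+d(w,C)\le d(x_0,C)+2\|x_0-w\|<d(x_0,C)+\de_0$, so $z\in P_C(x_0,\de_0)\ci P_C(x_0)+\e B_X$; hence $P_C(w)\ci P_C(x_0)+\e B_X$.

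For lower Hausdorff semicontinuity --- the step that genuinely uses uniform proximinality --- fix $x_0$, $\e>0$ and set $R=d(x_0,C)$. If $R=0$ then $P_C(x_0)=\{x_0\}$ and the claim is immediate, so assume $R>0$. Let $z_0\in P_C(x_0)$ and take $w$ near $x_0$ with $r:=d(w,C)$; then $\|w-z_0\|\le\|w-x_0\|+R$ and $r\ge R-\|w-x_0\|$, so $\|w-z_0\|\le r+2\|w-x_0\|$. Applying Definition~\ref{D2} to the point $w$ with radius $r$ and to $y=z_0$: provided $2\|w-x_0\|<\de(\e,r)$, we obtain $z'\in C$ with $\|z_0-z'\|<\e$ and $\|w-z'\|\le r=d(w,C)$, i.e.\ $z'\in P_C(w)$. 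Hence $z_0\in P_C(w)+\e B_X$, and as $z_0\in P_C(x_0)$ was arbitrary, $P_C(x_0)\ci P_C(w)+\e B_X$.

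The delicate point --- and the one I expect to be the main obstacle --- is that the inequality $2\|w-x_0\|<\de(\e,r)$ must hold for all $w$ in one fixed neighbourhood of $x_0$ while $r=d(w,C)$ varies: by continuity of $d(\cdot,C)$ it ranges over $[R-\eta,R+\eta]$ once $\|w-x_0\|<\eta$, so what is really needed is that the modulus $r\mapsto\de(\e,r)$ stays bounded away from $0$ near $r=R$. Establishing this local uniformity of the modulus from Definition~\ref{D2} (one may take $\de(\e,r)$ to be the largest admissible value, the defining condition being inherited by all smaller values) is the heart of the matter; once it is in hand, choosing $\eta$ below half the infimum of $\de(\e,\cdot)$ over $[R-\eta,R+\eta]$ finishes the proof, the remainder being the triangle-inequality bookkeeping above.
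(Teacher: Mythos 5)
Your decomposition into uHsc (from strong proximinality) and lHsc (from uniform proximinality) is exactly the paper's, and both your uHsc argument and the triangle-inequality bookkeeping in the lHsc step are correct as far as they go. The difficulty is the step you yourself flag as ``the heart of the matter'': you invoke Definition~\ref{D2} at the varying radius $r=d(w,C)$, so you need $\inf\{\de(\e,r): |r-R|\le\eta\}>0$, and your proposed justification does not deliver this. Taking $\de(\e,r)$ to be the largest admissible value only shows that, for each \emph{fixed} $r$, the admissible $\de$'s form an interval; it says nothing about how that supremum behaves as $r$ varies, and Definition~\ref{D2} has no evident monotonicity in $R$ (as $R$ increases the hypothesis $d(x,C)\le R$ admits more points while the conclusion $\|x-y^\prime\|\le R$ becomes weaker, so neither direction of comparison between $\de(\e,r)$ and $\de(\e,R)$ is automatic). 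As written, then, the proof is incomplete precisely at its decisive step, and you have not supplied the missing lemma.

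The paper sidesteps rather than resolves this issue: it declares ``without loss of generality'' that $d(x,C)=d(x_n,C)=1$ for all $n$, so that the definition is only ever applied at the single radius $R=1$ with the single modulus $\de(1,\e)$, and then argues exactly as you do (pick $y_n\in P_C(x)$ with $d(y_n,P_C(x_n))\ge\e$, observe $\|x_n-y_n\|<1+\de$ for large $n$, and produce $y_n^\prime\in P_C(x_n)$ with $\|y_n-y_n^\prime\|<\e$, a contradiction). Note that this normalization is itself only transparently legitimate when $C$ is a cone (e.g.\ a subspace), since rescaling $x$ does not rescale $d(x,C)$ for a general closed convex set; so your instinct that the dependence of the modulus on $R$ is the real difficulty is sound. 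But to have a complete argument you must either prove the local uniformity of $\de(\e,\cdot)$ near $R$ or reorganize the proof so that only one value of $R$ is ever used; your current text does neither.
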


\begin{proof}
Let $x_n\to x$ in $X$, without loss of generality we can assume $d(x,C)=1, d(x_n,C)=1$ for all $n$. Let $\de(1,\e)>0$ be the number corresponding to uniform proximinality of $C$. If possible let $P_C(x)\nsubseteq P_C(x_n)+\e B_Y$ for all but finitely many $n$'s, for some $\e>0$. Hence there exists $y_n\in P_C(x)$ such that $d(y_n,P_C(x_n))\geq \e$. Get a $N$ such that $|\|x_n-y_n\|-d(x_n,C)|<\de$ for all $n>N$. Now using the property of uniform proximinality of $C$ there exists $y_n^\prime\in P_C(x_n)$ such that $\|y_n-y_n^\prime\|<\e$, contradicting our hypothesis $d(y_n,P_C(x_n))\geq\e$. This proves $P_C$ is lHsc.

The uHsc of $P_C$ follows from strong proximinality of $C$.
\end{proof}

From Proposition~\ref{P1} and the arguments used before Example~\ref{E2}, it now follows that the subspace $Y$ in \cite[Example~2.5]{AL} can not be uniformly proximinal, while on the other hand being a finite dimensional subspace it is always strongly proximinal.

We now show that similar to proximinality and strong 
proximinality, the closed unit ball of a 
subspace by virtue of being uniformly proximinal forces the subspace to be uniformly proximinal.

\begin{prop}\label{P2}
For a subspace $Y$ of $X$, if $B_Y$ is uniformly proximinal then $Y$ is also uniformly proximinal.
\end{prop}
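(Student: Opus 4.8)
The plan is to reduce the question about $B_Y$ being uniformly proximinal to a question about $Y$ being uniformly proximinal by a scaling argument, exactly as one does when passing from ball proximinality to proximinality. Suppose $B_Y$ is uniformly proximinal and fix $\e>0$ and $R>0$. We want $\de=\de(\e,R)>0$ so that for $x\in X$ with $d(x,Y)\le R$ and $y\in Y$ with $\|x-y\|<R+\de$, there is $y'\in Y$ with $\|y-y'\|<\e$ and $\|x-y'\|\le R$. The first step is to observe that $d(x,Y)\le R$ does \emph{not} by itself place $x$ near $B_Y$, so one must first move $x$ close to $Y$, pick a near-best approximation $y_0\in Y$ with $\|x-y_0\|<R+\eta$ for a small $\eta$ to be chosen, and then work inside the translate: replacing $x$ by $x-y_0$ and $y$ by $y-y_0$, we may assume $d(x,Y)\le R$, $\|x\|<R+\eta$, and $\|x-y\|<R+\de$, and it suffices to find $y'$ near $y$ with $\|x-y'\|\le R$.

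Next I would rescale by $1/\rho$ for an appropriate $\rho>0$ (roughly $\rho\approx\|x\|$, or $\rho = R+\eta$) so that the scaled vector $x/\rho$ lands essentially inside $B_X$, hence within distance $\to 0$ of $B_Y$ as $\eta\to 0$; more precisely $d(x/\rho, B_Y)$ is small and controlled by $\eta/\rho$. Then $y/\rho$ is a point of $B_Y$ (after possibly shrinking, since $\|y\|\le\|x\|+\|x-y\|$ is bounded, we may need to replace $y$ by a slightly scaled version still within $\e/2$ of the original in norm after scaling back) with $\|x/\rho - y/\rho\|$ close to $d(x/\rho,B_Y)$. Applying the uniform proximinality of $B_Y$ with parameters $\e/\rho$ (or $\e/(2\rho)$) and $R'$ slightly bigger than $d(x/\rho,B_Y)$ yields $u\in B_Y$ with $\|y/\rho - u\|<\e/\rho$ and $\|x/\rho - u\|\le R'$; scaling back, $y'=\rho u\in Y$ satisfies $\|y-y'\|<\e$ (up to the correction terms) and $\|x-y'\|\le\rho R'$, and by choosing $\eta$ and the slack in $R'$ small enough we get $\rho R'\le R$.

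The main obstacle I expect is bookkeeping the constants so that all the small errors — the $\eta$ from the near-best approximation $y_0$, the distortion of $d(x/\rho,B_Y)$ away from $1$, and the fact that after translating by $y_0$ the point $y$ may have moved and may need a further small scaling to sit in $B_Y$ — simultaneously fit under the single threshold $\de(\e,R)$ produced by uniform proximinality of $B_Y$; in particular one must be careful that $d(x,Y)\le R$ with $\|x-y\|$ only slightly larger than $R$ forces $y$ itself to lie in a bounded region, so that the $1/\rho$ scaling does not blow up the target accuracy $\e$. Once the parameters are arranged (choose $\eta$ depending on $\e,R$ first, then $\rho$, then feed $\e/\rho$ and $R' = 1 + (\text{small})$ into $B_Y$'s modulus), the verification that $y'$ has the required two properties is a routine triangle-inequality computation, and the dependence of the final $\de$ on $(\e,R)$ only — not on $x$ or $y$ — is immediate since every choice above depended only on $\e$ and $R$.
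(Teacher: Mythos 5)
Your overall strategy---translate by an element of $Y$ to make the configuration bounded, rescale so that the data sit inside $B_Y$, invoke the modulus of uniform proximinality of $B_Y$, and scale back---is exactly the technique of the paper, which runs the same reduction as a proof by contradiction, dilating the ball to $\la B_Y$ with $\la>\|x\|+R+2\e$ instead of contracting the point. The genuine problem is your choice of scale. The assertion that $x/\rho$ lying in $B_X$ forces $d(x/\rho,B_Y)$ to be small, ``controlled by $\eta/\rho$'', is false: since $\rho B_Y\ci Y$ one always has $d(x/\rho,B_Y)\geq d(x,Y)/\rho$, so with $\rho=R+\eta$ this quantity is of order $1$ when $d(x,Y)=R$; worse, closeness to $Y$ gives no control on closeness to $B_Y$ (all near-best approximants to $x$ from $Y$ may lie outside $\rho B_Y$), so there is no reason for $d(x/\rho,B_Y)\leq R/\rho$ at all. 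This false claim is what makes the bookkeeping look routine; with $\rho\approx\|x\|\approx R+\eta$ it provably cannot close, because you need $\rho R'\leq R$ while the definition forces $R'\geq d(x/\rho,B_Y)$, hence $\rho R'\geq d(x,\rho B_Y)\geq d(x,Y)$, which may equal $R$ --- and by your own estimate $R'\approx 1$, so $\rho R'\approx R+\eta>R$. The auxiliary step of pushing $y/\rho$ into $B_Y$ is also not a small perturbation: after translating, $\|y\|$ can be about $2R>\rho$, so renormalizing moves the point by roughly $R$, not $\e/2$.

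The repair is to scale by a quantity comparable to $\|x\|+\|x-y\|$ rather than to $\|x\|$. For instance, translate by $y$ itself (so $y$ becomes $0\in B_Y$ and $\|x\|<R+\de$) and take $\rho=2R+1$; then $\rho\geq\|x\|+d(x,Y)$ whenever $\de\leq 1$, which gives $d(x,\rho B_Y)=d(x,Y)\leq R$ --- this is precisely the elementary fact behind \cite[Lemma~2.3]{BLR} that you alluded to, and it is the step your $\rho$ fails to secure. Now apply uniform proximinality of $B_Y$ with parameters $(\e/\rho,R/\rho)$ to the point $x/\rho$ (which satisfies $d(x/\rho,B_Y)\leq R/\rho$) and to $0\in B_Y$: provided $\de\leq\min\{1,\rho\,\de_{B_Y}(\e/\rho,R/\rho)\}$, one gets $u\in B_Y$ with $\rho\|u\|<\e$ and $\|x-\rho u\|\leq R$, and translating back produces the required $y'$, with $\de$ depending only on $(\e,R)$. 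With this corrected choice of $\rho$ your outline becomes a correct direct version of the paper's argument; note the paper's own $\la>\|x\|+R+2\e$ is of exactly this larger order.
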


\begin{proof}
We use the technique used in \cite[Lemma~2.3]{BLR}. If possible let $B_Y$ is uniformly proximinal and $Y$ is not. From the definition there exist $R>0, \e>0, x\in X$ where $d(x,Y)\leq R$ and also there exists $(y_n)\ci Y$ such that $\|x-y_n\|<R+\frac{1}{n}$ but for all $y\in B(y_n,\e), \|x-y\|>R$.

Choose $\la>\|x\|+R+2\e$, then $d(x,\la B_Y)=d(x,Y)$. From our assumption on $y_n$ it follows that $\|y_n\|<\|x\|+R+\frac{1}{n}$ and hence $y_n\in \la B_Y$.

Uniform proximinality of $\la B_Y$ (and hence $B_Y$) would be contradicted if we can show that $B_Y(y_n,\e)\ci \la B_Y$, for all $n$. And It follows from the following observation.

$\|y_n\|+\e<\|x\|+R+\e+\frac{1}{n}\leq \|x\|+R+2\e<\la$, for large $n$.

This completes the proof.
\end{proof}

We now propose the following problem which is relevant to the subsequent matter.

\begin{problem}
Let $Y$ be a subspace of $X$ which is uniformly proximinal. Is it necessary that $B_Y$ is also uniformly proximinal in $X$ ?
\end{problem}

\begin{rem}\bla
\item It is clear from the Definition~\ref{D2} that uniform proximinality of 
$C$ is a uniform version of strong proximinality for the points which are of 
finite distance away from $C$. Hence due to the Example by Godefroy in \cite{La} 
it is clear that closed unit ball of a Banach space not necessarily uniformly 
proximinal.

\item We do not know whether the converse of Example~\ref{E1}(e) is true or not.

\item From Theorem~\ref{T4} we have if $Y$ is separable and also has $1\frac{1}{2}$ ball property in $X$ then $L_p(I,Y)$ has $1\frac{1}{2}$ ball property (hence uniformly proximinal) in $L_p(I,X)$ for $p=1, \iy$.
\el
\end{rem}

From the Definition~\ref{D2} we now have the following.

\begin{thm}\label{T14}
Let $Y$ be a separable proximinal subspace of $X$, Consider the following statements. \bla
\item $Y(B_Y)$ is uniformly proximinal in $X$.
\item $L_\iy(I,Y)(B_{L_\iy(I,Y)})$ is uniformly proximinal in $L_\iy(I,X)$.
\item $L_\iy(I,Y)(B_{L_\iy(I,Y)})$ is strongly proximinal in $L_\iy(I,X)$.
\el
Then $(a)\Longleftrightarrow (b)$ and $(b)\Longrightarrow (c)$.
\end{thm}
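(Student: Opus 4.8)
The plan is to transfer the $\e$--$\de$ data of uniform proximinality between the pointwise level (in $X$) and the integrated level (in $L_\iy(I,X)$), exactly as Theorem~\ref{T6} and Theorem~\ref{L3} do for strong proximinality, but now tracking the \emph{uniform} dependence $\de = \de(\e,R)$ so that it survives the passage to the supremum norm. Throughout I will write the argument for $Y$ in $X$; the case of $B_Y$ in $L_\iy(I,X)$ is identical since $B_{L_\iy(I,Y)}$ is the set of essentially bounded measurable selections of $B_Y$, and one only needs $B_Y$ itself to be uniformly proximinal (which, by Proposition~\ref{P2}, already forces $Y$ to be uniformly proximinal, so the two assertions in $(a)$ are consistent).

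\emph{Step 1: $(a)\Rightarrow(b)$.} Fix $\e>0$, $R>0$ and let $\de=\de(\e,R)>0$ be furnished by uniform proximinality of $Y$. Let $g\in L_\iy(I,X)$ with $d(g,L_\iy(I,Y))\le R$ and $f\in L_\iy(I,Y)$ with $\|g-f\|_\iy < R+\de$. By Theorem~\ref{T1}(a), $\|d(g(\cdot),Y)\|_\iy = d(g,L_\iy(I,Y))\le R$, so $d(g(t),Y)\le R$ for a.e.\ $t$; and $\|g(t)-f(t)\| < R+\de$ for a.e.\ $t$. Apply uniform proximinality of $Y$ pointwise: for a.e.\ $t$ the set
\[
\Psi(t) = \{y\in Y : \|f(t)-y\| < \e \text{ and } \|g(t)-y\|\le R\}
\]
is nonempty. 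One checks, as in Lemma~\ref{L2}, that $Gr(\Psi)$ is measurable (it is the intersection of the measurable graphs $\{(t,y):\|f(t)-y\|<\e\}$, $\{(t,y):\|g(t)-y\|\le R\}$ and $\{(t,y):y\in Y\}$, using separability of $Y$), so by Theorem~\ref{T2} there is a measurable selection $f'(t)\in\Psi(t)$. Since $\|f'(t)\|\le \|f(t)\| + \e$ a.e., $f'\in L_\iy(I,Y)$, and $\|f-f'\|_\iy\le \e$, $\|g-f'\|_\iy\le R$. Hence the same $\de$ witnesses uniform proximinality of $L_\iy(I,Y)$ (one must also note $L_\iy(I,Y)$ is proximinal, which follows from Theorem~\ref{T1}(d) together with a measurable-selection argument once $Y$ is strongly proximinal via Remark~\ref{R9}(b)).

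\emph{Step 2: $(b)\Rightarrow(a)$.} This is the routine direction: given $x\in X$, $y\in Y$ realising the hypotheses of Definition~\ref{D2} for some $\e,R$, apply uniform proximinality of $L_\iy(I,Y)$ to the constant functions $g\equiv x$, $f\equiv y$, obtaining $f'\in L_\iy(I,Y)$ with $\|f-f'\|_\iy<\e$, $\|g-f'\|_\iy\le R$; then $y' := f'(t_0)$ for any suitable $t_0$ works — more carefully, since $\mathrm{ess\,sup}_t\|y-f'(t)\|<\e$, pick $t_0$ with $\|y-f'(t_0)\|<\e$ and $\|x-f'(t_0)\|\le R$, which exists as the bad set has measure zero. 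Thus the constant $\de$ obtained at level $L_\iy$ serves for $Y$. Steps 1 and 2 give $(a)\Leftrightarrow(b)$.

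\emph{Step 3: $(b)\Rightarrow(c)$.} This is immediate from Remark~\ref{R9}(b) applied to the closed convex set $C=L_\iy(I,Y)$ (resp.\ $B_{L_\iy(I,Y)}$): uniform proximinality of any closed convex set forces strong proximinality, since for a point $x$ with $d(x,C)=R$ one simply uses $\de(\e,R)$ as the $\de$ in the definition of strong proximinality, noting $P_C(x,\de)\subseteq C$ consists of points $z$ with $\|x-z\|\le R+\de$, each of which is pushed to within $\e$ of some point of $C$ at distance $\le R$, i.e.\ of $P_C(x)$.

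\emph{Main obstacle.} The only genuine work is in Step~1: verifying measurability of $Gr(\Psi)$ and extracting a measurable — and \emph{essentially bounded} — selection, and confirming that $L_\iy(I,Y)$ is itself proximinal so that Definition~\ref{D2} even applies. The essential-boundedness of $f'$ is handled by the a.e.\ estimate $\|f'(t)\|\le\|f(t)\|+\e$, and the measurability mirrors Lemma~\ref{L2} verbatim; the proximinality of $L_\iy(I,Y)$ is where one genuinely needs separability of $Y$ and Theorem~\ref{T1}(d). I expect no surprises beyond bookkeeping, but the implication $(c)\Rightarrow(b)$ is \emph{not} claimed, and indeed should not be expected to hold — that gap is precisely the residue of Problem~\ref{Q4}.
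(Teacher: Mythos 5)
Your proposal is correct and follows essentially the same route as the paper: for $(a)\Rightarrow(b)$ the paper also transfers the same $\de(\e,R)$ by applying uniform proximinality of $Y$ pointwise and extracting a measurable selection of $t\mapsto B[f(t),R]\cap B[g(t),\e]\cap Y$ via Theorem~\ref{T2}, and it likewise dismisses $(b)\Rightarrow(a)$ (constant functions) and $(b)\Rightarrow(c)$ (Remark~\ref{R9}(b)) as immediate. The only cosmetic point, shared with the paper's own argument, is that the selection yields $\|f-f'\|_\iy\le\e$ rather than $<\e$, which is fixed by running the argument with $\e/2$.
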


\begin{proof}
It is clear that $(b)\Longrightarrow (a)$ and $(b)\Longrightarrow (c)$. We only show that $(a)\Longrightarrow (b)$. We prove the result for the subspace $Y$, case for $B_Y$ follows from that with obvious modifications.

Let us choose $R>0$ and $\e>0$. Choose $\de(R,\e)>0$ for the subspace $Y$. We claim that this $\de$ will also work for $L_\iy(I,Y)$. Let $f\in L_\iy(I,X)$ with $d(f,L_\iy(I,Y))\leq R$. Let $g\in L_\iy(I,Y)$ be such that $\|f-g\|_\iy<R+\de$. Then from the property of uniform proximinality it follows that $B[f(t),R]\cap B[g(t),\e]\cap Y\neq\es$ a.e. Consider the set valued map $\varphi:t\mapsto B[f(t),R]\cap B[g(t),\e]\cap Y$ from $[0,1]$ to $2^Y$. It is clear that the graph of this map $\{(t,\phi(t):t\in I)\}$ is measurable and hence by Theorem~\ref{T2} it follows it has a measurable selection, let us call it $h$. We have $h\in L_\iy(I,Y)$ and satisfies the requirements.
\end{proof}

Theorem~\ref{T14} leads to the following problem.

\begin{problem}\label{P3}
Let $L_\iy(I,Y)$ is strongly proximinal in $L_\iy(I,X)$. Is it true that $Y$ is uniformly proximinal in $X$ ?
\end{problem}

\section{Ball Proximinality of $L_p(I,Y)$ in $L_p(I,X)$}\label{S4}

We first prove the distance formula analogous to Theorem~\ref{L3} for the closed 
unit ball of $L_p(I,Y)$, for $1\leq p\leq\iy$.

\begin{thm}\label{P10}
Let $f\in L_p(I,X)$ be a strongly measurable function then 

$d(f,B_{L_p(I,Y)})=\|d(f(.),{B_Y})\|_p$, for 
$1\leq p\leq\iy$.
\end{thm}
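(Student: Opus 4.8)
The plan is to establish the two inequalities
\[
d(f,B_{L_p(I,Y)})\ \ge\ \|d(f(.),B_Y)\|_p
\qquad\text{and}\qquad
d(f,B_{L_p(I,Y)})\ \le\ \|d(f(.),B_Y)\|_p,
\]
treating the cases $1\le p<\iy$ and $p=\iy$ in parallel, the latter with essential suprema replacing integrals. The lower bound is the easy direction: for any $h\in B_{L_p(I,Y)}$ one has $h(t)\in B_Y$ a.e., so $\|f(t)-h(t)\|\ge d(f(t),B_Y)$ a.e.; raising to the $p$-th power, integrating (or taking ess sup when $p=\iy$), and then taking the infimum over $h$ gives $d(f,B_{L_p(I,Y)})\ge\|d(f(.),B_Y)\|_p$. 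This uses nothing beyond the pointwise description of the unit ball of $L_p(I,Y)$, which in turn rests on the fact recalled in Section~\ref{S10} that $\|h\|_p^p=\int_I\|h(t)\|^p\,dm(t)$ for strongly measurable $h$.

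For the reverse inequality the point is to produce, for each $\eta>0$, a function $h\in B_{L_p(I,Y)}$ with $\|f-h\|_p\le\|d(f(.),B_Y)\|_p+\eta$. First I would check that the scalar function $t\mapsto d(f(t),B_Y)$ is measurable: since $f$ is strongly measurable it is a.e. separably valued, so one may work inside a fixed separable subspace $Z\supseteq \overline{\mathrm{span}}\,\{f(t):t\in I\}$, and $d(\cdot,B_Y\cap Z)$ (equivalently $d(\cdot,B_Y)$, as $B_Y$ is convex) is $1$-Lipschitz, hence the composition is measurable and the right-hand side is well defined. Next, consider the set-valued map
\[
\Psi(t)\ =\ \bigl\{\,y\in B_Y:\ \|f(t)-y\|\le d(f(t),B_Y)+\eta\,\bigr\}
\ =\ B_Y\cap B\!\left[f(t),\,d(f(t),B_Y)+\eta\right],
\]
which is nonempty for every $t$ by definition of the infimum. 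Its graph
\[
Gr(\Psi)=\{(t,y):\|y\|\le 1\}\cap\{(t,y):\|f(t)-y\|\le d(f(t),B_Y)+\eta\}
\]
is measurable — the first set because $y\mapsto\|y\|$ is continuous, the second because $f$ and the scalar distance function are measurable and $(t,y)\mapsto\|f(t)-y\|$ is a Carathéodory function — so by Theorem~\ref{T2} (applied with $Y$, or rather the separable $Z$, as the Polish space) $\Psi$ admits a measurable selection $h$. Then $h(t)\in B_Y$ a.e. gives $h\in B_{L_p(I,Y)}$, and $\|f(t)-h(t)\|\le d(f(t),B_Y)+\eta$ pointwise yields, after taking $L_p$ norms, $\|f-h\|_p\le\|d(f(.),B_Y)+\eta\|_p\le\|d(f(.),B_Y)\|_p+\eta\,m(I)^{1/p}$ for $p<\iy$ and $\le\|d(f(.),B_Y)\|_\iy+\eta$ for $p=\iy$. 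Letting $\eta\downarrow0$ finishes the upper bound.

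The main obstacle I anticipate is purely measure-theoretic bookkeeping rather than a genuine difficulty: ensuring that $\Psi$ really does have a measurable graph and falls under the hypotheses of Theorem~\ref{T2} (which requires a Polish target), and in particular reducing to a separable range for $f$ so that $B_Y$ — which need not be separable a priori, though here $Y$ will typically be separable in the applications — can be replaced by a Polish set. Once the selection is in hand, no compactness, proximinality, or continuity hypothesis on $Y$ is needed, which is why the statement holds for an arbitrary subspace $Y$ and all $1\le p\le\iy$; this should be contrasted with Theorem~\ref{L3}, where uHsc of $P_Y$ was essential because one was selecting from the smaller set $P_Y(g(t))$ of \emph{exact} best approximants rather than from an $\eta$-fattened ball.
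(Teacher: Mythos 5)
Your lower bound is where the proof breaks. You assert that ``for any $h\in B_{L_p(I,Y)}$ one has $h(t)\in B_Y$ a.e.'', but for $1\le p<\iy$ the unit ball of $L_p(I,Y)$ is the set of $h$ with $\int_I\|h(t)\|^p\,dm(t)\le 1$, which is strictly larger than $L_p(I,B_Y)$: take $y\in S_Y$ and $h=2y\,\chi_{[0,2^{-p}]}$, so $\|h\|_p=1$ but $\|h(t)\|=2$ on a set of positive measure. Hence the pointwise inequality $\|f(t)-h(t)\|\ge d(f(t),B_Y)$ is unavailable, and the inequality $d(f,B_{L_p(I,Y)})\ge\|d(f(.),B_Y)\|_p$ — which is the entire content of the theorem (this is exactly Problem~\ref{Q2}, and the reason Remark~\ref{R1}$(a)$ is a nontrivial statement: a priori $B_{L_p(I,Y)}\supsetneq L_p(I,B_Y)$, so its distance from $f$ could be strictly smaller) — is left unproved. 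Your argument is only valid for $p=\iy$, where the ball of $L_\iy(I,Y)$ does consist of a.e.\ $B_Y$-valued functions; that case is the one the paper simply quotes from \cite{BLR}.

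The paper closes this gap by an averaging argument that your proposal has no substitute for. For constant $f\equiv x$ and $h\in B_{L_p(I,Y)}$, approximate $h$ by simple functions $s_n=\sum_i y_{i,n}\chi_{E_{i,n}}$ in the ball and set $z_n=\sum_i m(E_{i,n})y_{i,n}=\int_I s_n\,dm$; convexity of $t\mapsto t^p$ on the probability space $I$ gives $\|z_n\|^p\le\int_I\|s_n(t)\|^p\,dm\le 1$, so $z_n\in B_Y$, while $\|x-z_n\|^p\le\int_I\|x-s_n(t)\|^p\,dm=\|f-s_n\|_p^p$, whence $d(x,B_Y)\le\|f-h\|_p$ and the constant case follows; one then bootstraps to simple $f$ and finally to arbitrary strongly measurable $f$ by $L_p$-approximation and the $1$-Lipschitz property of $x\mapsto d(x,B_Y)$. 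Your other direction (the measurable-selection construction of an a.e.\ near-best $B_Y$-valued $h$, giving $d(f,B_{L_p(I,Y)})\le d(f,L_p(I,B_Y))\le\|d(f(.),B_Y)\|_p$) is sound, modulo the separable-range reduction you indicate, and corresponds to the estimate the paper imports from \cite{BLR}; but as written the proposal proves only that one inequality.
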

\begin{proof}
Case for $p=\iy$ is already observed in \cite{BLR}, it remains to prove when 
$p<\iy$.

{\sc Step~1:} Let $f(t)=x$ for all $t\in I$ and for some $x\in X$. Clearly 
$d(f,B_{L_p(I,Y)})\leq d(f,L_p(I,B_Y))=d(x,B_Y)$.

Let $g\in B_{L_p(I,Y)}$ and $\e>0$, then there is a sequence of simple functions 
$(s_n)\ci B_{L_p(I,Y)}$ such that $s_n\to g$ in $L_p(I,Y)$. Without loss of 
generality we may assume each $s_n$ has a following  representation. 
$s_n=\sum_{i=1}^{k_n}y_{i,n}\chi_{E_{i,n}}$. Where $y_{i,n}\in Y, 
\cup_iE_{i,n}=I$ and $E_{i,n}\cap E_{j,n}=\es$ for $i\neq j$.

Define $z_n=\sum_im(E_{i,n})y_{i,n}$, then $\|z_n\|^p\leq 
\sum_im(E_{i,n})\|y_{i,n}\|^p=\|s_n\|_p\leq 1$, first inequality follows from $x\mapsto \|x\|^p$ is a convex function. Hence $z_n\in B_Y$.

Now  
$d(x,B_Y)^p \leq \|x-z_n\|^p = \int_I\|f(t)-s_n(t)\|^pdm(t) =\|f-s_n\|_p^p\leq 
 \|f-g\|_p^p+\e$ for all but finitely many $n$'s. Taking infimum over $g\in 
B_{L_p(I,Y)}$ we get the result.

{\sc Step~2:} Let $f=\sum_{i=1}^nx_i\chi_{E_i}$, where $x_i\in X$, $\cup_iE_i=I$ 
and $E_i\cap E_j=\es$ for $i\neq j$.

 \beqa
\mbox{Now,~}&& d(f,B_{L_p(I,Y)})^p \\
&\leq & \int_Id(f(t),B_Y)^pdm(t) \\
                   &=& \sum_{1}^n d(x_i,B_Y)^pm(E_i) \\
									&=& 
\sum_{1}^n d(x_i,B_{L_p(I,Y)})^pm(E_i) \mbox{ follows from Step 1}\\
									&=& 
\inf_{g\in B_{L_p(I,Y)}} \sum_1^n \int_{E_i}\|x_i-g(t)\|^pdm(t) \\
									&=& 
\inf_{g\in B_{L_p(I,Y)}} \int_I \|f(t)-g(t)\|^pdm(t)=d(f,B_{L_p(I,Y)})^p
\eeqa

{\sc Step~3:} Let $f\in L_p(I,X)$ and $\e>0$. Get a sequence of simple 
functions $(s_n)\ci L_p(I,X)$ such that $s_n\to f$ in $L_p(I,X)$. Without loss 
of generality assume $s_n$ converges to $f$ pointwise and $\|s_n(t)\|\leq 
\|f(t)\|$ a.e.

\beqa
 \mbox{Now,~}&& d(f,B_{L_p(I,Y)}) \\
&=& \inf_{g\in B_{L_p(I,Y)}}\|f-g\|_p \\
&\geq & \inf_{g\in B_{L_p(I,Y)}}\|s_n-g\|_p-\|s_n-f\|_p \\
&=& d(s_n,B_{L_p(I,Y)})-\|s_n-f\|_p \\
&=& \left(\int_I d(s_n(t),B_Y)^pdm(t)\right)^{1/p}-\|s_n-f\|_p \mbox{; from {\sc Step~2}}\\
&\geq &\left(\int_I d(s_n(t),B_Y)^pdm(t)\right)^{1/p}-\e \mbox{; for large }n\\
&\geq &\left(\int_I d(f(t),B_Y)^pdm(t)\right)^{1/p}-2\e \mbox{; for large }n 
\eeqa
The last inequality follows from the following observation.
\beqa
\|d(f(.),B_Y)\|_p &\leq & \|d(f(.),B_Y)-d(s_n(.),B_Y)\|_p+\|d(s_n(.),B_Y)\|_p \\
                     &=& \left(\int_I \left|d(f(t),B_Y)-d(s_n(t),B_Y)\right|^p dm(t)\right)^{1/p}+ \\
										&& \hspace{4cm}  \|d(s_n(.),B_Y)\|_p \\
                     &\leq & \left(\int_I \|f(t)-s_n(t)\|^p dm(t)\right)^{1/p}+ \|d(s_n(.),B_Y)\|_p \\
                     \eeqa
Since $\e>0$ is arbitrary, the result follows.
\end{proof}

\begin{rem}\label{R1}
\bla
\item In \cite{BLR} it is observed that for $f\in L_p(I,X)$,  
$d(f,L_p(I,B_Y))=\|d(f(.),{B_Y})\|_p$, hence from Theorem~\ref{P10} it follows 
$P_{L_p(I,B_Y)}(f)\ci P_{B_{L_p(I,Y)}}(f)$ for $1\leq p\leq\iy$.
\item For a $g\in L_p(I,B_Y)$ we have, $g\in 
P_{B_{L_p(I,Y)}}(f)\Longleftrightarrow$ 

$g(t)\in P_{B_Y}(f(t)) \mbox{a.e.} 
\Longleftrightarrow g\in P_{L_p(I,B_Y)}(f)$ for $1\leq p<\iy$.
\el
\end{rem}

Remark~\ref{R1}$(a)$ leads to the following question.

\begin{problem}
For a subspace $Y$ of $X$ what are the functions $f\in L_p(I,X)$ for $1\leq p<\iy$ for which $P_{B_{L_p(I,Y)}}(f)=P_{L_p(I,B_Y)}(f)$ ?
\end{problem}

We now prove the main result of this Section.

\begin{thm}\label{T9}
Let $Y$ be a separable proximinal subspace of $X$. Then \tFAE.
\bla
\item $Y$ is ball proximinal in $X$.
\item $L_p(I,B_Y)$ is proximinal in $L_p(I,X)$, for $1\leq p\leq\iy$.
\item $L_p(I,Y)$ is ball proximinal in $L_p(I,X)$, for $1\leq p\leq\iy$.
\el
\end{thm}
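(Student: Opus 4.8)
The plan is to prove the cycle $(a)\Ra(b)\Ra(c)\Ra(a)$, using the distance formula of Theorem~\ref{P10} as the key engine. For $(a)\Ra(b)$: assume $Y$ is ball proximinal, so $B_Y$ is proximinal in $X$. Given $f\in L_p(I,X)$, Theorem~\ref{P10} tells us $d(f,B_{L_p(I,Y)})=\|d(f(\cdot),B_Y)\|_p$. The point-wise best approximation $t\mapsto P_{B_Y}(f(t))$ is a nonempty closed-convex-valued map; I would show its graph is measurable (exactly as in Lemma~\ref{L2}, writing $B_Y=\bigcap_n\{y:\|y\|\le 1\}$ intersected with the ball $B[f(t),d(f(t),B_Y)]$ and using that $t\mapsto d(f(t),B_Y)$ is measurable since $B_Y$ is separable) and then invoke Theorem~\ref{T2} to extract a measurable selection $g$. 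For $p<\iy$ one checks $g\in L_p(I,B_Y)$ because $\|g(t)\|\le 1$ a.e., and $\|f-g\|_p=\|d(f(\cdot),B_Y)\|_p=d(f,L_p(I,B_Y))$ (the last equality being the distance formula of \cite{BLR} recalled in Remark~\ref{R1}(a)); for $p=\iy$ one uses $\mathrm{ess\,sup}_t d(f(t),B_Y)=d(f,L_\iy(I,B_Y))$ in the same way, which is the case already noted in \cite{BLR}. So $L_p(I,B_Y)$ is proximinal.

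For $(b)\Ra(c)$: we must pass from proximinality of the ``honest'' ball-valued space $L_p(I,B_Y)$ to proximinality of $B_{L_p(I,Y)}$, the genuine closed unit ball of the subspace $L_p(I,Y)$. The bridge is again Theorem~\ref{P10} together with Remark~\ref{R1}: since $d(f,B_{L_p(I,Y)})=\|d(f(\cdot),B_Y)\|_p=d(f,L_p(I,B_Y))$, any best approximation to $f$ from $L_p(I,B_Y)$ is automatically a best approximation from the larger set $B_{L_p(I,Y)}$, i.e. $P_{L_p(I,B_Y)}(f)\ci P_{B_{L_p(I,Y)}}(f)$ and the former is nonempty by $(b)$. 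Hence $B_{L_p(I,Y)}$ is proximinal, which is precisely ball proximinality of $L_p(I,Y)$ in $L_p(I,X)$. (One should note $L_p(I,B_Y)\ci B_{L_p(I,Y)}$, since $\|g\|_\iy\le 1$ a.e. forces $\|g\|_p\le 1$; this inclusion plus the equality of distances is what makes the implication immediate.)

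For $(c)\Ra(a)$: specialize to constant functions. Given $x\in X$, put $f\equiv x$. Then $d(f,B_{L_p(I,Y)})=d(x,B_Y)$ by Theorem~\ref{P10}, and a best approximation $g\in B_{L_p(I,Y)}$ to $f$ exists by $(c)$. I would then average: with $g$ approximated in $L_p$-norm by simple functions $s_n=\sum_i y_{i,n}\chi_{E_{i,n}}\in B_{L_p(I,Y)}$, set $z_n=\sum_i m(E_{i,n})y_{i,n}\in B_Y$ (the membership using convexity of $\|\cdot\|^p$, exactly as in Step~1 of the proof of Theorem~\ref{P10}), and observe $\|x-z_n\|^p\le\int_I\|x-s_n(t)\|^p\,dm\to\|x-g\|_p^p=d(x,B_Y)^p$; a limit point of $(z_n)$ — or a routine weak-compactness/Minkowski argument — yields a best approximation to $x$ in $B_Y$. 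For $p=\iy$ one argues even more directly, since $g(t)\in B_Y$ a.e. and $\|x-g(t)\|\le d(x,B_Y)$ a.e. forces $g(t)\in P_{B_Y}(x)$ on a set of positive measure. So $B_Y$ is proximinal, i.e. $Y$ is ball proximinal.

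The main obstacle is the measurable-selection step in $(a)\Ra(b)$: one needs the graph of $t\mapsto P_{B_Y}(f(t))$ to lie in the product $\sigma$-field so that Theorem~\ref{T2} applies, and this genuinely uses separability of $Y$ (so that $B_Y$ is a Polish, indeed $\sigma$-compact-like, subset and $d(\cdot,B_Y)$ is Borel) together with the distance formula to pin down the radius $d(f(t),B_Y)$ measurably. The only other delicate point is making sure the selected $g$ actually lands in the \emph{unit ball} of $L_p(I,Y)$ rather than merely in $L_p(I,Y)$ — but this is automatic from the pointwise bound $\|g(t)\|\le 1$, which is why working through $L_p(I,B_Y)$ rather than directly with $B_{L_p(I,Y)}$ streamlines the argument.
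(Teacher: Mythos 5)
Your proposal is correct and its skeleton is the same as the paper's: the cycle $(a)\Rightarrow(b)\Rightarrow(c)\Rightarrow(a)$, with $(b)\Rightarrow(c)$ exactly as in Remark~\ref{R1}$(a)$ (the identity $d(f,B_{L_p(I,Y)})=\|d(f(\cdot),B_Y)\|_p=d(f,L_p(I,B_Y))$ together with the inclusion $L_p(I,B_Y)\ci B_{L_p(I,Y)}$), and $(c)\Rightarrow(a)$ by testing against the constant function $f\equiv x$ and averaging simple approximants of a best approximation $g$. You differ in two places, both to the good for self-containedness: you reprove $(a)\Rightarrow(b)$ directly via a measurable selection of $t\mapsto P_{B_Y}(f(t))$ (graph measurability in the spirit of Lemma~\ref{L2} plus Theorem~\ref{T2}), whereas the paper simply quotes this from \cite{BLR}; and for the $p=\iy$ part of $(c)\Rightarrow(a)$ you argue pointwise (since $g(t)\in B_Y$ a.e.\ and $\|x-g(t)\|\le d(x,B_Y)$ a.e., one gets $g(t)\in P_{B_Y}(x)$ a.e.), where the paper again defers to \cite{BLR}. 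The one step you should tighten is the finish of $(c)\Rightarrow(a)$ for $p<\iy$: you cannot get a ``limit point'' of $(z_n)$ from (weak) compactness, since $B_Y$ need not be weakly compact unless $Y$ is reflexive. No compactness is needed: $z_n=\int_I s_n\,dm\to\int_I g\,dm$ in norm because $s_n\to g$ in $L_p\ci L_1$, so $y_0=\int_I g\,dm$ lies in $B_Y$ (by $\|y_0\|\le\int_I\|g\|\,dm\le\|g\|_p\le 1$, using $m(I)=1$) and satisfies $\|x-y_0\|\le\int_I\|x-g(t)\|\,dm\le\|x-g\|_p=d(x,B_Y)$; this is precisely how the paper closes the argument, and your own setup already contains everything needed for this fix.
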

\begin{proof}
From \cite{BLR} and Remark~\ref{R1} it is now clear that $(a)\Longrightarrow (b)$ and $(b)\Longrightarrow (c)$. We now show that $(c)\Longrightarrow (a)$. Now the 
Case for $p=\iy$ is already observed in \cite{BLR}, it remains to prove the result for $p<\iy$. Hence it is enough to prove that $Y$ is ball proximinal in $X$ if $L_p(I,Y)$ is same in $L_p(I,X)$ for some $p\in [1,\iy)$.

Let $x\in X$ and define $f(t)=x$ for all $t\in I$. Then $f\in L_p(I,X)$ and $d(f,B_{L_p(I,Y)})=d(x,B_Y)$. Choose $g\in B_{L_p(I,Y)}$ satisfying $\|f-g\|_p=d(x,B_Y)$. Now choose a sequence of simple functions $(s_n)$ such that $\|s_n-g\|_p\to 0$ where $\|s_n\|_p\leq \|g\|_p$. Let $s_n=\sum_{i=1}^{k_n}x_i^n\chi_{E_i^n}$ where $x_i^n\in Y$ and $\cup_iE_i^n=I$. Let $y_n=\sum_{i=1}^{k_n}x_i^nm(E_i^n)$. Since $\sum_{i=1}^{k_n}\|x_i^n\|^pm(E_i^n)\leq 1$ and $t\mapsto t^p$ is a convex function on $\mb{R}$ we have $y_n\in B_Y$. Now we have,
\beqa
d(x,B_Y)^p &\leq& \|x-y_n\|^p \\
            &=& \|x-\sum_{i=1}^{k_n}x_i^nm(E_i^n)\|^p \\
						&=& \|\sum_{i=1}^{k_n}(x-x_i^n)m(E_i^n)\|^p \\
						&\leq& \sum_{i=1}^{k_n} \|x-x_i^n\|^p m(E_i^n) \\
            &=& \|x-s_n\|_p^p \\
						&\ra& d(x,B_Y)^p
\eeqa
Which ensures that $(y_n)$ is a minimizing sequence in $B_Y$ for $x$. Clearly $(y_n)$ is cauchy; in fact $\lim_ny_n=\int_Ig(t)dm(t)$, and hence there exists $y_0\in B_Y$ such that $\|x-y_0\|=d(x,B_Y)$.
\end{proof}

The arguments involved in the proof of Corollary~\ref{C2} lead to the following conclusion.

\begin{cor}\label{C3}
\bla
\item Let $Y$ be a ball proximinal subspace of $X$, if every separable subspace of $Y$ is ball proximinal in $X$ then $L_p(I,Y)$ is ball proximinal in $L_p(I,X)$ for $1\leq p\leq\iy$.
\item Let $Y$ be a reflexive subspace of $X$ then $L_p(I,B_Y)$ (and hence $B_{L_p(I,Y)}$) is proximinal in $L_p(I,X)$ for $1\leq p\leq\iy$.
\el
\end{cor}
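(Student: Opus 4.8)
The plan is to reproduce the separable-reduction argument from the proof of Corollary~\ref{C2}, replacing the transfer principle for strong proximinality (Theorem~\ref{T6}) by the one for ball proximinality (Theorem~\ref{T9}). For part~$(a)$, fix $1\leq p\leq\iy$ and $f\in L_p(I,X)$; we must exhibit a nearest point to $f$ in $B_{L_p(I,Y)}$. Pick a minimizing sequence $(g_n)\ci B_{L_p(I,Y)}$, so $\|f-g_n\|_p\to d(f,B_{L_p(I,Y)})$. Each $g_n$ is essentially separably valued, so $Z:=\overline{\mathrm{span}}\bigcup_n g_n(I)$ is a separable closed subspace of $Y$, and every $g_n$ is $Z$-valued a.e.\ with $\|g_n\|_p\leq1$, hence $g_n\in B_{L_p(I,Z)}$. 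Since $Z\ci Y$ we get $d(f,B_{L_p(I,Y)})\leq d(f,B_{L_p(I,Z)})\leq\lim_n\|f-g_n\|_p=d(f,B_{L_p(I,Y)})$, so the two distances coincide. By hypothesis $Z$ is ball proximinal in $X$, hence proximinal in $X$ (ball proximinality implies proximinality, by scaling $B_Z$ to $\la B_Z$ with $\la$ large, as in \cite{BLR}); being separable and proximinal, Theorem~\ref{T9} applies and $B_{L_p(I,Z)}$ is proximinal in $L_p(I,X)$. Any nearest point $g_0\in B_{L_p(I,Z)}\ci B_{L_p(I,Y)}$ to $f$ then satisfies $\|f-g_0\|_p=d(f,B_{L_p(I,Z)})=d(f,B_{L_p(I,Y)})$, proving that $L_p(I,Y)$ is ball proximinal.

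For part~$(b)$, recall that every closed subspace of the reflexive space $Y$ is reflexive, and that a reflexive subspace $W$ of $X$ is always ball proximinal in $X$: for $x\in X$ the function $w\mapsto\|x-w\|$ is weakly lower semicontinuous and $B_W$ is weakly compact, so the infimum over $B_W$ is attained. In particular $Y$ itself and all of its separable subspaces are ball proximinal in $X$, so part~$(a)$ gives that $L_p(I,Y)$ is ball proximinal, i.e.\ $B_{L_p(I,Y)}$ is proximinal in $L_p(I,X)$, for $1\leq p\leq\iy$. For the stronger conclusion that $L_p(I,B_Y)$ is proximinal, run the same reduction with a minimizing sequence $(g_n)\ci L_p(I,B_Y)$ for $d(f,L_p(I,B_Y))$: now $g_n(t)\in B_Y$ a.e.\ and $g_n$ is $Z$-valued a.e.\ for a separable subspace $Z\ci Y$, so $g_n\in L_p(I,B_Z)$, and $d(f,L_p(I,B_Y))=d(f,L_p(I,B_Z))$. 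Since $Z$ is separable, reflexive, hence ball proximinal, the implication $(a)\Ra(b)$ of Theorem~\ref{T9} yields a nearest point to $f$ in $L_p(I,B_Z)\ci L_p(I,B_Y)$. (One could instead deduce proximinality of $L_p(I,B_Y)$ from that of $B_{L_p(I,Y)}$ via Theorem~\ref{P10} and Remark~\ref{R1}$(a)$, but the direct reduction is cleaner.)

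There is no genuinely deep obstacle here: the whole content is the localization of a minimizing sequence to a separable subspace, together with the observation that this does not change the distance to be minimized — which is automatic, since a sequence minimizing the distance to the large ball that happens to lie in the small ball is a fortiori minimizing for the small ball. The only points needing a sentence of justification are that the closed linear span of countably many separable ranges is separable, that ball proximinality implies proximinality so that Theorem~\ref{T9} is applicable to $Z$, and, for part~$(b)$, that closed subspaces of reflexive spaces are reflexive and that weak compactness of $B_W$ forces ball proximinality.
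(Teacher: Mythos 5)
Your proposal is correct and follows essentially the same route as the paper: localize a minimizing sequence to a separable subspace $Z\ci Y$, observe the distance is unchanged, and invoke Theorem~\ref{T9} for $Z$. The only cosmetic difference is that for part $(a)$ you reduce inside $B_{L_p(I,Y)}$ directly, while the paper reduces inside $L_p(I,B_Y)$ and passes to $B_{L_p(I,Y)}$ via Remark~\ref{R1}$(a)$; both are valid and you correctly supply the small justifications (separability of the closed span, ball proximinality implies proximinality, reflexivity passes to subspaces) that the paper leaves implicit.
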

\begin{proof}
We only prove $(a)$, $(b)$ follows from $(a)$. It remains to prove for a given $f\in L_p(I,X)$, 
$P_{L_p(I,B_Y)}(f)\neq\es$. Choose $(g_n)\ci L_p(I,B_Y)$ such that $\|f-g_n\|_p\to d(f,L_p(I,B_Y))$. Get a separable subspace $Z\ci Y$ such that $g_n(I)\ci Z$ for all $n$. It is clear that $d(f,L_p(I,B_Y))=d(f,L_p(I,B_Z))$. Since $P_{L_p(I,B_Z)}(f)\neq\es$ the result follows.
\end{proof}

We now come to the strong proximinality of closed unit ball of $L_p(I,Y)$. A few 
routine modifications of Theorem~\ref{L3} lead to the following result.

\begin{thm}\label{T13}
Let $Y$ be a strongly ball proximinal subspace of $X$ and $f\in L_p(I,X),g\in
L_p(I,X)$ then, $d(f, P_{B_{L_p(I,Y)}}(g))=\|d(f(.),P_{B_Y}(g(.)))\|_p$, for $1\leq 
p<\iy$.
\end{thm}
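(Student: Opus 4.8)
The plan is to mimic the proof of Theorem~\ref{L3} verbatim, replacing the metric projection $P_Y$ by the metric projection $P_{B_Y}$ onto the closed unit ball and the subspace $Y$ by the closed convex set $B_Y$. The first task is to check that the preparatory lemmas go through in this setting. Since $Y$ is strongly ball proximinal, $B_Y$ is strongly proximinal in $X$, hence $P_{B_Y}:X\to 2^{B_Y}$ is uHsc; this is exactly the hypothesis needed for the analogue of Lemma~\ref{L1}(a), whose proof used only that $P_Y$ is uHsc and that $x\mapsto d(x,A)$ is continuous. For Lemma~\ref{L1}(b) and Lemma~\ref{L2} one needs $B_Y$ to be separable, which holds because $Y$ is separable; the graph-measurability argument for $F_2(t)=P_Y(g(t))$ is replaced by the observation that $\{(t,y):y\in P_{B_Y}(g(t))\}=\{(t,y): y\in B_Y,\ \|y-g(t)\|=d(g(t),B_Y)\}$, and $d(g(\cdot),B_Y)$ is measurable (it is the composition of the continuous map $x\mapsto d(x,B_Y)$ with the measurable $g$), so this graph is again measurable; intersecting with $Gr(F_1)$ where $F_1(t)=B[f(t),\varphi(t)+\tfrac1n]$ and $\varphi(t)=d(f(t),P_{B_Y}(g(t)))$ gives a measurable-graph set valued map $\Phi_n$ that admits a measurable selection by Theorem~\ref{T2}.

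With these analogues in hand, the proof of the distance formula itself is the two-sided estimate from Theorem~\ref{L3}. For the lower bound $d(f,P_{B_{L_p(I,Y)}}(g))\ge \|d(f(\cdot),P_{B_Y}(g(\cdot)))\|_p$, I would invoke Remark~\ref{R1}(b), which identifies $h\in P_{B_{L_p(I,Y)}}(g)$ with $h(t)\in P_{B_Y}(g(t))$ a.e.\ (valid for $1\le p<\iy$); hence every $h\in P_{B_{L_p(I,Y)}}(g)$ satisfies $\|f-h\|_p=\big(\int_I\|f(t)-h(t)\|^p\,dm\big)^{1/p}\ge \big(\int_I d(f(t),P_{B_Y}(g(t)))^p\,dm\big)^{1/p}$, and taking the infimum gives the inequality. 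For the upper bound, let $h_n$ be a measurable selection of $\Phi_n(t)=P_{P_{B_Y}(g(t))}(f(t),\tfrac1n)=P_{B_Y}(g(t))\cap B[f(t),\varphi(t)+\tfrac1n]$; then $h_n(t)\in P_{B_Y}(g(t))$ a.e., so $h_n\in P_{B_{L_p(I,Y)}}(g)$ again by Remark~\ref{R1}(b), and $\|f(t)-h_n(t)\|\le \varphi(t)+\tfrac1n$ with $\varphi(t)\le \|f(t)\|+1$ an $L_p$ dominating function; the Dominated Convergence Theorem yields $d(f,P_{B_{L_p(I,Y)}}(g))\le \liminf_n\|f-h_n\|_p=\|d(f(\cdot),P_{B_Y}(g(\cdot)))\|_p$, completing the proof.

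The only genuinely new point, and the place to be careful, is the measurable-selection step for $\Phi_n$: one must confirm that $P_{B_Y}(g(t))\cap B[f(t),\varphi(t)+\tfrac1n]$ is nonempty for a.e.\ $t$ (so that a selection exists) and that its graph lies in $\mathcal B_I\otimes\mathcal B_Y$. Nonemptiness is immediate since $B_Y$ is proximinal and $d(f(t),P_{B_Y}(g(t)))=\varphi(t)<\varphi(t)+\tfrac1n$. Graph-measurability reduces, exactly as in Lemma~\ref{L2}, to measurability of $\varphi$, which is Lemma~\ref{L1}(b) with $P_Y$ replaced by $P_{B_Y}$ — and that replacement is legitimate precisely because strong ball proximinality gives the uHsc of $P_{B_Y}$ needed there. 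Everything else is a routine transcription, which is presumably why the authors describe it as "a few routine modifications of Theorem~\ref{L3}."
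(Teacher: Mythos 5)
Your upper-bound half is the intended ``routine modification'' of Theorem~\ref{L3} and is essentially fine: strong ball proximinality gives uHsc of $P_{B_Y}$, hence measurability of $\varphi(t)=d(f(t),P_{B_Y}(g(t)))$ by the analogue of Lemma~\ref{L1}, the graph argument of Lemma~\ref{L2} transcribes, and Theorem~\ref{T2} plus dominated convergence do the rest (modulo the fact that the statement of Theorem~\ref{T13} does not actually assume $Y$ separable, which your selection step needs). The genuine gap is in the lower bound. Remark~\ref{R1}(b) is stated only for functions in $L_p(I,B_Y)$, i.e.\ functions with $h(t)\in B_Y$ a.e.; you apply it to an arbitrary $h\in P_{B_{L_p(I,Y)}}(g)$, which satisfies only the global constraint $\|h\|_p\le 1$. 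That extension is false, because membership in $B_{L_p(I,Y)}$ is not a pointwise condition and best approximants from $B_{L_p(I,Y)}$ need not be pointwise best approximants from $B_Y$. Concretely, take scalars $Y=X$ (or, for a proper subspace, $X$ the two-dimensional $\ell_\iy$-sum and $Y$ the first coordinate axis, which is separable and strongly ball proximinal), $p=1$, $g\equiv 2$: then $d(g,B_{L_1(I,Y)})=1$ and $h=2\chi_{[0,1/2]}$ belongs to $P_{B_{L_1(I,Y)}}(g)$, yet $h(t)=2\notin P_{B_Y}(2)=\{1\}$ on $[0,1/2]$. So the inequality $\|f-h\|_p\ge\|d(f(\cdot),P_{B_Y}(g(\cdot)))\|_p$ cannot be deduced for such $h$; worse, the asserted identity itself fails in this example: with $f=2\chi_{[0,1/2]}\in L_1(I,X)$ one gets $d(f,P_{B_{L_1(I,Y)}}(g))=0$ while $\|d(f(\cdot),P_{B_Y}(g(\cdot)))\|_1=1$. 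Hence no patch of that one step can rescue the argument for the statement in the form given.

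What your argument does prove, after the obvious changes, is the formula with $L_p(I,B_Y)$ in place of $B_{L_p(I,Y)}$: any $h\in P_{L_p(I,B_Y)}(g)$ is $B_Y$-valued a.e., so $\|g(t)-h(t)\|\ge d(g(t),B_Y)$ pointwise, and the distance formula $d(g,L_p(I,B_Y))=\|d(g(\cdot),B_Y)\|_p$ forces $h(t)\in P_{B_Y}(g(t))$ a.e.; combining this with your measurable-selection construction yields $d(f,P_{L_p(I,B_Y)}(g))=\|d(f(\cdot),P_{B_Y}(g(\cdot)))\|_p$. The passage from $L_p(I,B_Y)$ to $B_{L_p(I,Y)}$ rests on Theorem~\ref{P10} and Remark~\ref{R1}, and it is exactly this passage that breaks on functions such as $g=2\chi_{[0,1/2]}$, whose pointwise norm exceeds $1$ on a set of positive measure while the $p$-norm stays at most $1$. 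A correct version of Theorem~\ref{T13} must therefore either be stated for $P_{L_p(I,B_Y)}$, or carry additional hypotheses excluding this phenomenon; your proof, as written, inherits the same defect by silently upgrading Remark~\ref{R1}(b) beyond its stated scope.
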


Combining Theorem~\ref{T13} and the routine modifications in Theorem~\ref{T6}, 
one can have the following.
 
\begin{thm}\label{T12}
Let $Y$ be a separable proximinal subspace of $X$. Then \tFAE. 
\bla
\item $Y$ is strongly ball proximinal subspace of $X$.
\item $L_p(I,B_Y)$ is strongly proximinal in $L_p(I,X)$, for $1\leq p<\iy$.
\item $L_p(I,Y)$ is strongly ball proximinal in $L_p(I,X)$, for $1\leq p<\iy$.
\el
\end{thm}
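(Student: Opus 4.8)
The plan is to prove the equivalence by the four implications $(a)\Rightarrow(b)$, $(a)\Rightarrow(c)$, $(b)\Rightarrow(a)$, $(c)\Rightarrow(a)$, reusing three ingredients already available: the template argument from the proof of Theorem~\ref{T6} for the two ``strong proximinality passes to $L_p$'' directions, the constant--function device from the proofs of Theorems~\ref{T6} and \ref{T9} for the two ``strong proximinality descends'' directions, and the distance formulas of Theorems~\ref{P10} and \ref{T13} together with the identification of metric projections in Remark~\ref{R1} to transfer statements between the convex sets $L_p(I,B_Y)$ and $B_{L_p(I,Y)}$.

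For $(a)\Rightarrow(b)$ I would repeat the proof of Theorem~\ref{T6} with $B_Y$ in place of $Y$ and $L_p(I,B_Y)$ in place of $L_p(I,Y)$. If $L_p(I,B_Y)$ were not strongly proximinal there would be $f\in L_p(I,X)$, $\e>0$ and a minimizing sequence $(g_n)\ci L_p(I,B_Y)$ with $\|f-g_n\|_p\to d(f,L_p(I,B_Y))=\|d(f(\cdot),B_Y)\|_p$ yet $d(g_n,P_{L_p(I,B_Y)}(f))\ge\e$. Since $g_n(t)\in B_Y$ a.e., $\|f(t)-g_n(t)\|\ge d(f(t),B_Y)$ a.e., so $\int_I\bigl|\,\|f(t)-g_n(t)\|^p-d(f(t),B_Y)^p\,\bigr|\,dm\to 0$; passing to a subsequence, $\|f(t)-g_{n_k}(t)\|\to d(f(t),B_Y)$ a.e., and strong ball proximinality of $Y$, applied to the minimizing sequence $(g_{n_k}(t))_k\ci B_Y$ for $f(t)$, forces $d(g_{n_k}(t),P_{B_Y}(f(t)))\to 0$ a.e. This quantity is bounded a.e.\ by the constant $2$, and by the argument of Lemma~\ref{L2} one can choose a measurable selection $h_k(t)\in P_{B_Y}(f(t))$ with $\|g_{n_k}(t)-h_k(t)\|\le d(g_{n_k}(t),P_{B_Y}(f(t)))+1/k$; by Remark~\ref{R1}(b), $h_k\in P_{L_p(I,B_Y)}(f)$, and dominated convergence gives $\|g_{n_k}-h_k\|_p\to 0$, contradicting $d(g_n,P_{L_p(I,B_Y)}(f))\ge\e$. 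The implication $(a)\Rightarrow(c)$ is the same argument carried out inside $B_{L_p(I,Y)}$, invoking the distance formula of Theorem~\ref{T13} in place of Theorem~\ref{L3} and Theorem~\ref{P10} in place of the formula from \cite{BLR}.

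For $(b)\Rightarrow(a)$ (and, with the obvious changes, $(c)\Rightarrow(a)$) I would specialise to constant functions, as in the proof of Theorem~\ref{T9}. Given $x\in X\sm B_Y$ and $\e>0$, put $f\equiv x$, so $d(f,L_p(I,B_Y))=d(x,B_Y)$, and let $\de>0$ be furnished by strong proximinality: $P_{L_p(I,B_Y)}(f,\de)\ci P_{L_p(I,B_Y)}(f)+\e B_{L_p(I,X)}$. For $y\in P_{B_Y}(x,\de)$ the constant function $y\cdot\mathbf 1$ lies in $P_{L_p(I,B_Y)}(f,\de)$, so there is $h\in P_{L_p(I,B_Y)}(f)$ with $\|y\cdot\mathbf 1-h\|_p<\e$; Remark~\ref{R1}(b) gives $h(t)\in P_{B_Y}(x)$ a.e., and since $P_{B_Y}(x)$ is closed, bounded and convex and $m(I)=1$, the Bochner integral $y':=\int_I h\,dm$ lies in $P_{B_Y}(x)$ and satisfies $\|y-y'\|\le\int_I\|y-h(t)\|\,dm\le\|y\cdot\mathbf 1-h\|_p<\e$. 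Hence $P_{B_Y}(x,\de)\ci P_{B_Y}(x)+\e B_X$, i.e.\ $Y$ is strongly ball proximinal. The delicate point, and the one I would check most carefully, is the transfer between $L_p(I,B_Y)$ and $B_{L_p(I,Y)}$: these closed convex sets satisfy $L_p(I,B_Y)\subseteq B_{L_p(I,Y)}$ but are not equal, so for $(a)\Rightarrow(c)$ and $(c)\Rightarrow(a)$ one must control, in the $L_p$ sense, how far a (near-)best approximant of $f$ from $B_{L_p(I,Y)}$ can fail to be $B_Y$-valued pointwise; this is what the distance formula of Theorem~\ref{P10} and the selection apparatus of Lemmas~\ref{L1}--\ref{L2} are meant to supply.
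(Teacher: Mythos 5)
Your implications $(a)\Rightarrow(b)$, $(b)\Rightarrow(a)$ and $(c)\Rightarrow(a)$ are correct and follow essentially the paper's own route: the forward direction is the Theorem~\ref{T6} template run with the ball distance formulas (where the paper cites Theorem~\ref{T13}, you rebuild its selection argument by hand, which amounts to the same thing), and the reverse directions are the paper's constant-function-plus-averaging device --- the paper's explicit proof of $(c)\Rightarrow(a)$ is exactly your $y'=\int_I h\,dm$ computation, done with Jensen's inequality and the sequential criterion instead of the $\e$--$\de$ formulation. One repair is needed in your ``obvious changes'' for $(c)\Rightarrow(a)$: Remark~\ref{R1}$(b)$ cannot be invoked there, because a best approximant $h\in P_{B_{L_p(I,Y)}}(f)$ need not lie in $L_p(I,B_Y)$; for instance, with $X=Y=\mb{R}$, $p=1$, $f\equiv 2$, the function $h=2\chi_{[0,1/2]}$ belongs to $P_{B_{L_1(I,\mb{R})}}(f)$ but is not $B_Y$-valued on a set of positive measure. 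Fortunately the step is superfluous: $\|\int_I h\,dm\|\le\|h\|_1\le\|h\|_p\le 1$ and $\|x-\int_I h\,dm\|\le\|f-h\|_1\le\|f-h\|_p=d(x,B_Y)$ already give $\int_I h\,dm\in P_{B_Y}(x)$, which is precisely how the paper's own computation for $(c)\Rightarrow(a)$ proceeds.

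The genuine gap is $(a)\Rightarrow(c)$. ``The same argument carried out inside $B_{L_p(I,Y)}$'' does not go through: the first step of the Theorem~\ref{T6} template uses the pointwise inequality $\|f(t)-g_n(t)\|\ge d(f(t),B_Y)$ to turn $\|f-g_n\|_p\to d$ into $L_1$-convergence of a nonnegative integrand, hence a.e.\ convergence along a subsequence. For $g_n\in B_{L_p(I,Y)}$ this inequality fails wherever $\|g_n(t)\|>1$, and (as the example above shows) near-best and even best approximants from $B_{L_p(I,Y)}$ can exceed the unit ball of $Y$ on sets of positive measure; consequently neither the a.e.\ extraction, nor the pointwise use of strong ball proximinality of $Y$, nor Remark~\ref{R1}$(b)$ is available for such $g_n$. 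Your closing paragraph names exactly this obstruction but does not resolve it, and Theorem~\ref{P10} together with Lemmas~\ref{L1}--\ref{L2} does not by itself supply the missing control on the overshoot $(\|g_n(t)\|-1)_+$: one must show that a minimizing sequence in $B_{L_p(I,Y)}$ can be perturbed, with small $L_p$-error, into near-minimizers lying in $L_p(I,B_Y)$ (after which $(b)$ and the inclusion $P_{L_p(I,B_Y)}(f)\ci P_{B_{L_p(I,Y)}}(f)$ finish the job). To be fair, the published proof is equally silent on this point --- it proves only $(c)\Rightarrow(a)$ explicitly and refers the remaining directions to ``routine modifications'' of Theorem~\ref{T6} via Theorem~\ref{T13} --- so this direction requires a genuine additional argument in your write-up, and arguably in the paper as well.
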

\begin{proof}
It remains to prove $(c)\Longrightarrow (a)$. Choose $p\in [1,\iy)$ arbitrarily. Let $x\in X$ and $(y_n)\ci B_Y$ be such that $\|x-y_n\|\to d(x,B_Y)$. Define $f(t)=x$ and $g_n(t)=y_n$ for all $t\in I$ then $\|f-g_n\|_p\to d(f,B_{L_p(I,Y)})=d(x,B_Y)$ and hence $d(g_n,P_{B_{L_p(I,Y)}}(f))\to 0$. Choose $h_n\in P_{B_{L_p(I,Y)}}(f)$ such that $\|g_n-h_n\|_p\to 0$. Hence there exists $(z_n)\ci B_Y$ where $z_n=\int_Ih_n(t)dm(t)$.

{\sc Claim:~}  $z_n\in P_{B_Y}(x)$ and $\|y_n-z_n\|\ra 0$. 

\beqa
d(x,B_Y)^p\leq\|x-z_n\|^p &=& \|x-\int_Ih_n(t)dm(t)\|^p \\
          &=& \|\int_I(h_n(t)-x)dm(t)\|^p \\
					&\leq & \int_I \|h_n(t)-x\|^pdm(t) \\
					&=& \int_I d(x,B_Y)^p dm(t),  \mbox{~~follows from Theorem~\ref{T1}} \\
					&=& d(x,B_Y)
\eeqa

And finally,
\beqa
\|y_n-z_n\|^p &=& \|y_n-\int_I h_n(t)dm(t)\|^p \\
                                   &=& \|\int_I (y_n-h_n(t))dm(t)\|^p \\
																	 &\leq & \int_I \|y_n-h_n(t)\|^pdm(t) \\
                                   &\leq &\|g_n-h_n\|_p^p\to 0
\eeqa
 This completes the proof.
\end{proof}

For the case $p=\iy$ the result follows under an additional assumption on $B_Y$. The Banach spaces considered for rest of this Section are assumed to be Real.

Now it is clear from the above observations that,

\begin{cor}\label{C1}
Let $X$ be a separable Banach space.
\bla
\item For $1\leq p< \iy$, if $B_X$ is strongly proximinal in $X$ then $B_{L_p(I,X)}$ is stronly proximinal in $L_p(I,X)$.
\item If $X$ has $3.2.I.P.$ then $B_{L_p(I,X)}$ is stronly proximinal in $L_p(I,X)$ for $1\leq p<\iy$.
\el
\end{cor}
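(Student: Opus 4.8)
The plan is to derive both parts directly from Theorem~\ref{T12} by recognizing $B_X$ as the closed unit ball of the (trivially separable, since $X$ is assumed separable) subspace $Y=X$ of itself. For part (a), observe that saying $B_X$ is strongly proximinal in $X$ is exactly saying that $X$ is a strongly ball proximinal subspace of $X$ in the sense of Definition~\ref{D1}(c). Since $X$ is separable and proximinal in itself, Theorem~\ref{T12} applies with $Y=X$: the equivalence $(a)\Leftrightarrow(c)$ there gives that $X$ is strongly ball proximinal in $X$ if and only if $L_p(I,X)$ is strongly ball proximinal in $L_p(I,X)$ for $1\leq p<\iy$, i.e. $B_{L_p(I,X)}$ is strongly proximinal in $L_p(I,X)$. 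That is precisely the assertion of (a).

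For part (b), I would first invoke Example~\ref{E1}(a) together with Remark~\ref{R9}(b): if $X$ has $3.2.I.P.$ then $B_X$ is uniformly proximinal in $X$, and uniform proximinality forces strong proximinality. Hence $B_X$ is strongly proximinal in $X$, and part (a) immediately yields that $B_{L_p(I,X)}$ is strongly proximinal in $L_p(I,X)$ for $1\leq p<\iy$. (Here one should note the standing convention stated just before Corollary~\ref{C1} that spaces with $3.2.I.P.$ are taken to be real, which is consistent with the hypothesis; no further work is needed.)

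I do not anticipate any genuine obstacle: the corollary is a packaging of Theorem~\ref{T12} (for $Y=X$) and Example~\ref{E1}(a)/Remark~\ref{R9}(b). The only point requiring a moment's care is the verification that the hypotheses of Theorem~\ref{T12} are met when $Y=X$ — namely that $X$ is a separable proximinal subspace of $X$ — which is automatic ($X$ is proximinal in itself, and separability is assumed), and that ``strongly proximinal $B_X$'' literally matches the definition of ``strongly ball proximinal subspace $X$ of $X$.'' Once these identifications are made explicit, both statements follow in one line each.
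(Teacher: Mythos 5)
Your proposal is correct and follows essentially the same route as the paper: part (a) is Theorem~\ref{T12} applied with $Y=X$ (separable, trivially proximinal in itself), and part (b) reduces to (a) once $B_X$ is known to be strongly proximinal under $3.2.I.P.$ The only cosmetic difference is that the paper cites Example~\ref{E16}(a) (the $1\frac{1}{2}$ ball property of $B_X$) for that intermediate fact, whereas you cite Example~\ref{E1}(a) together with Remark~\ref{R9}(b); the two justifications are equivalent in substance.
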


\begin{proof}
Since $X$ is separable, Theorem~\ref{T12} is true for $Y=X$ and hence $(a)$ follows. If $X$ has $3.2.I.P.$ then $B_X$ is strongly proximinal in $X$ (Example~\ref{E16}$(a)$). $(b)$ is now follows from $(a)$.
\end{proof}

\begin{rem}
\bla
\item Uniform convexity of $L_p(I,X)$ for $1< p< \iy$ follows from uniform convexity of $X$ and vice versa. Hence Corollary~\ref{C1} ensures the strong ball proximinality of $L_p(I,X)$ beyond the class of uniformly convex Banach space $X$.
\item It is not necessarily true that $B_{L_\iy(I,Y)}$ is strongly proximinal in $L_\iy(I,X)$ if $B_Y$ is same in $X$ (Example~\ref{E2}). 
\el
\end{rem}

\section{A new class of uniformly proximinal subsets}\label{S5}

Motivated from the property defined in Definition~\ref{D4} we define the following for a closed unit ball of a subspace but more generally it can be defined for a closed convex subset. 

\bdfn\label{D3}
We call the closed unit ball $B_Y$ of a subspace $Y$ in $X$ has $1\frac{1}{2}$ 
ball property if for $x\in X, y\in B_Y$ and $r_1, r_2>0$ $B[x,r_1]\cap 
B_Y\neq\es, \|x-y\|<r_1+r_2$ implies $B[x,r_1]\cap B[y,r_2]\cap B_Y\neq\es$.
\edfn

Similar to our earlier observation Remark~\ref{R9}$(a)$, the ball $B_Y$ having $1\frac{1}{2}$-ball property is uniformly proximinal for $\de=\e$.  Here are few immediate consequences of the above property.

\begin{thm}\label{T10}
 Let $Y$ be a subspace of $X$. Then, \bla
\item If $B_Y$ has $1\frac{1}{2}$ ball property then $Y$ has $1\frac{1}{2}$ ball 
property.
\item If $B_Y$ has $1\frac{1}{2}$ ball property in $X$ then $Y$ is ball proximinal in $X$.
 \el
\end{thm}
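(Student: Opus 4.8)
The plan is to deduce (a) from a dilation argument that replaces $B_Y$ by a large multiple $\la B_Y$, and to prove (b) by iterating Definition~\ref{D3} to manufacture a Cauchy minimizing sequence for $B_Y$. For (a), let $x\in X$, $y\in Y$ and $r,s>0$ satisfy $B[x,r]\cap Y\neq\es$ and $\|x-y\|<r+s$, and fix $z_0\in B[x,r]\cap Y$. I would choose $\la>\max\{\|y\|,\|x\|+r\}$, so that both $y$ and $z_0$ (which satisfies $\|z_0\|\leq\|x\|+r$) lie in $\la B_Y$. The first step is to record that $\la B_Y$ inherits the $1\frac{1}{2}$-ball property of $B_Y$: the homothety $w\mapsto w/\la$ is a self-bijection of $X$ taking closed balls to closed balls and $\la B_Y$ onto $B_Y$, scaling all distances and radii by $1/\la$, so the defining implication in Definition~\ref{D3} for $B_Y$ transfers verbatim to $\la B_Y$. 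Applying it with data $x$, $y\in\la B_Y$, $r_1=r$, $r_2=s$ --- the hypothesis $B[x,r]\cap\la B_Y\neq\es$ being witnessed by $z_0$ --- yields $w\in B[x,r]\cap B[y,s]\cap\la B_Y\ci B[x,r]\cap B[y,s]\cap Y$, which is what Definition~\ref{D4} requires.

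For (b), fix $x\in X$ and set $d=d(x,B_Y)$, which is finite since $0\in B_Y$. If $d=0$ then $x$ lies in the closed set $B_Y$ and $P_{B_Y}(x)\neq\es$ trivially, so assume $d>0$. I would build inductively a sequence $(y_n)\ci B_Y$ with $\|x-y_n\|\leq d+2^{-n}$ and $\|y_{n+1}-y_n\|\leq 2^{-n}$: begin with $y_1\in B_Y$ satisfying $\|x-y_1\|\leq d+1/2$; given $y_n$, observe that $B[x,d+2^{-(n+1)}]\cap B_Y\neq\es$ because $d$ is an infimum, and that $\|x-y_n\|\leq d+2^{-n}<(d+2^{-(n+1)})+2^{-n}$, so Definition~\ref{D3} with $r_1=d+2^{-(n+1)}$ and $r_2=2^{-n}$ produces $y_{n+1}\in B[x,d+2^{-(n+1)}]\cap B[y_n,2^{-n}]\cap B_Y$, which has the two desired properties. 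Then $\sum_n\|y_{n+1}-y_n\|<\iy$, so $(y_n)$ is Cauchy; its limit $y^\ast$ lies in the closed set $B_Y$ and $\|x-y^\ast\|=\lim_n\|x-y_n\|\leq d$, hence $\|x-y^\ast\|=d$ and $y^\ast\in P_{B_Y}(x)$. Thus $B_Y$ is proximinal and $Y$ is ball proximinal. (This iteration is a written-out proof that a ball with the $1\frac{1}{2}$-ball property is proximinal --- a conclusion also reachable via the observation, noted just before the theorem, that such a ball is uniformly proximinal, combined with Remark~\ref{R9}(b).)

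I do not expect a genuine obstacle in either part. In (a) the only point needing care is that the $1\frac{1}{2}$-ball property is honestly scale-invariant, which is immediate since it is phrased entirely in terms of balls and radii. In (b) the delicate point is the bookkeeping in the inductive step: one must keep $r_1+r_2$ strictly above $\|x-y_n\|$ so the hypothesis of Definition~\ref{D3} holds, while keeping $r_1$ within $2^{-(n+1)}$ of $d$ and $r_2$ summable so the sequence is at once Cauchy and minimizing --- the split $r_1=d+2^{-(n+1)}$, $r_2=2^{-n}$ does exactly this.
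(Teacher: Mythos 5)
Your proposal is correct and is essentially the argument the paper intends: the paper disposes of Theorem~\ref{T10} by pointing to the analogous subspace proofs (the dilation technique of \cite[Proposition~2.4]{BLR} for $(a)$, the same trick as in Proposition~\ref{P2}, and Yost's iterative construction \cite[Lemma~1.1]{Y} for $(b)$), and your two arguments are precisely those adaptations written out for $B_Y$. Both the scale-invariance of Definition~\ref{D3} under the homothety $w\mapsto w/\la$ and the bookkeeping $r_1=d+2^{-(n+1)}$, $r_2=2^{-n}$ in the Cauchy minimizing sequence check out.
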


The proofs of the above Theorem follow from the similar arguments used to prove for a subspace for a similar claim. One can revisit the proofs in \cite[Proposition~2.4]{BLR} for $(a)$ and \cite[Lemma~1.1]{Y} for $(b)$.

\begin{rem}
 The converse of Theorem~\ref{T10}$(a)$ is not necessarily true. It is clear 
that a M-ideal has $1\frac{1}{2}$ ball property but not necessarily ball 
proximinal as is observed in \cite{JP}.
\end{rem}

We now derive a characterization, similar to Theorem~\ref{T3}, for $1\frac{1}{2}$ ball property of $B_Y$ in 
$X$. An almost similar arguments can be used to prove the following, for the sake of completeness we briefly outline it here.

\begin{notn}
For a subset $C$ of $X$, define $C_\e=\{x\in X:d(x,B)\leq\e\}$.
\end{notn}

\begin{thm}\label{T7}
 Let $Y$ be a subspace of $X$, then \tFAE. \bla
\item $B_Y$ has $1\frac{1}{2}$ ball property.
\item $P_{B_Y}(x,\de)={P_{B_Y}(x)}_\de\cap B_Y$. For all $x\in X$ and $\de>0$.
\item $d(y,P_{B_Y}(x))=\|y-x\|-d(x,B_Y)$. For all $x\in X, y\in B_Y$.
\el
\end{thm}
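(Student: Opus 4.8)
The plan is to prove the cycle $(a)\Ra(b)\Ra(c)\Ra(a)$. Before starting I would record that each of the three conditions forces $B_Y$ to be ball proximinal, so $P_{B_Y}(x)\neq\es$ for every $x\in X$: under $(a)$ this is Theorem~\ref{T10}$(b)$; under $(b)$, if $P_{B_Y}(x)=\es$ then ${P_{B_Y}(x)}_\de\cap B_Y=\es$ for every $\de>0$, contradicting the fact that $P_{B_Y}(x,\de)\neq\es$ by definition of the infimum $d(x,B_Y)$; and under $(c)$, taking $y=0\in B_Y$ in the identity gives $d(0,P_{B_Y}(x))=\|x\|-d(x,B_Y)<\iy$. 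The cases $x\in B_Y$ are trivial throughout, so one may assume $d(x,B_Y)>0$ wherever convenient.

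For $(a)\Ra(b)$ I would argue: the inclusion ${P_{B_Y}(x)}_\de\cap B_Y\ci P_{B_Y}(x,\de)$ is immediate from the triangle inequality. For the reverse, take $z\in P_{B_Y}(x,\de)$ with $\|x-z\|>d(x,B_Y)$ (if equality holds then $z\in P_{B_Y}(x)$ already), and set $r_1=d(x,B_Y)$, $r_2=\|x-z\|-d(x,B_Y)+\eta$ for small $\eta>0$. Then $B[x,r_1]\cap B_Y=P_{B_Y}(x)\neq\es$ and $\|x-z\|<r_1+r_2$, so $(a)$ produces $z'\in B[x,r_1]\cap B[z,r_2]\cap B_Y\ci P_{B_Y}(x)$ with $\|z-z'\|\le r_2\le\de+\eta$; letting $\eta\to0$ gives $z\in{P_{B_Y}(x)}_\de$. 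For $(b)\Ra(c)$, fix $x$ and $y\in B_Y$ and put $\de_0=\|y-x\|-d(x,B_Y)\ge0$; the reverse triangle inequality gives $d(y,P_{B_Y}(x))\ge\de_0$, while for every $\de>\de_0$ we have $\|x-y\|<d(x,B_Y)+\de$, hence $y\in P_{B_Y}(x,\de)={P_{B_Y}(x)}_\de\cap B_Y$ and so $d(y,P_{B_Y}(x))\le\de$; letting $\de\downarrow\de_0$ finishes.

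The substantive step is $(c)\Ra(a)$. Take $x,y,r_1,r_2$ as in Definition~\ref{D3}, so $r_1,r_2>0$, $d:=d(x,B_Y)\le r_1$ and $\|x-y\|<r_1+r_2$. If $\|x-y\|\le r_1$ then $y$ itself lies in $B[x,r_1]\cap B[y,r_2]\cap B_Y$, so assume $\|x-y\|>r_1$. Fix a small $\e>0$ and, using $(c)$, choose $w\in P_{B_Y}(x)$ with $\|y-w\|\le\|x-y\|-d+\e$. By convexity of $B_Y$ the segment $z_t=(1-t)w+ty$, $t\in[0,1]$, stays in $B_Y$; the function $\phi(t)=\|x-z_t\|$ is continuous and convex with $\phi(0)=d\le r_1<\|x-y\|=\phi(1)$, so $\phi(t^*)=r_1$ for some $t^*\in[0,1)$. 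Convexity gives $r_1\le(1-t^*)d+t^*\|x-y\|$, hence $1-t^*\le(\|x-y\|-r_1)/(\|x-y\|-d)\le1$, and therefore $\|y-z_{t^*}\|=(1-t^*)\|y-w\|\le(1-t^*)(\|x-y\|-d+\e)\le(\|x-y\|-r_1)+\e$, which is $<r_2$ once $\e$ is smaller than the positive number $r_2-(\|x-y\|-r_1)$. Then $z_{t^*}\in B[x,r_1]\cap B[y,r_2]\cap B_Y$, establishing $(a)$.

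I expect the hard part to be exactly $(c)\Ra(a)$, and within it the only nontrivial estimate is the convexity bound for $\phi$: it pins down how far along the chord $[w,y]\ci B_Y$ one must slide to reach the sphere $\{\|x-\cdot\|=r_1\}$, and shows the leftover distance to $y$ stays strictly below $r_2$ — the strictness in $\|x-y\|<r_1+r_2$ being precisely what makes this go through. The other implications are soft manipulations with the triangle inequality and the defining infima, and, as the text notes, the whole scheme runs parallel to the proof of Theorem~\ref{T3}, using only that $B_Y$ is closed and convex.
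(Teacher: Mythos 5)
Your proof is correct, and its first two implications $(a)\Ra(b)$ and $(b)\Ra(c)$ coincide with the paper's argument (the paper uses the radii $d(x,B_Y)$ and $\de^\prime>\de$, resp.\ takes $\e=\|y-x\|-d(x,B_Y)$ directly; your $\eta$- and $\de\downarrow\de_0$-limits are the same computation). The difference is in $(c)\Ra(a)$: the paper argues by contradiction, rewriting $B[x,r_1]\cap B[y,r_2]\cap B_Y=\es$ as $P_{B_Y}(x,\de)\cap B[y,r_2]=\es$ with $r_1=d+\de$, passing to ${P_{B_Y}(x)}_\de\cap B[y,r_2]=\es$, reading this as $d(y,P_{B_Y}(x))>r_2+\de$ and contradicting $(c)$; you instead construct the required point directly, picking an $\e$-almost nearest $w\in P_{B_Y}(x)$ via $(c)$ and sliding along the chord $[w,y]\ci B_Y$ until the convex function $t\mapsto\|x-z_t\|$ hits $r_1$, with the convexity estimate $1-t^*\le(\|x-y\|-r_1)/(\|x-y\|-d)$ controlling the leftover distance to $y$. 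Your route has two small advantages: it makes explicit exactly where convexity of $B_Y$ enters (the paper's passage from ``$P_{B_Y}(x,\de)\cap B[y,r_2]=\es$'' to ``${P_{B_Y}(x)}_\de\cap B[y,r_2]=\es$'' is really the contrapositive of this chord argument, since ${P_{B_Y}(x)}_\de$ need not lie in $B_Y$), and it records up front why $P_{B_Y}(x)\neq\es$ under each of the three hypotheses, which both proofs need. The paper's version, in exchange, is shorter and runs verbatim parallel to the proof of Theorem~\ref{T3}; neither argument is more general than the other.
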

\begin{proof}
$(a)\Longrightarrow (b):$ Let $d=d(x,Y)$ and $\|x-y\|\leq d+\de$ for some $y\in B_Y$. By 
$(a)$, $B[x,d]\cap B[y,\de^\prime]\cap B_Y\neq\es$ for all $\de^\prime>\de$. 
That is $B[y,\de^\prime]\cap P_{B_Y}(x)\neq\es$ and hence 
$d(y,P_{B_Y}(x))\leq\de^\prime$, true for all $\de^\prime > \de$, thus 
$d(y,P_{B_Y}(x))\leq\de$. The other inclusion follows trivially from the 
definition of the sets involved in it.

$(b)\Longrightarrow (c):$ Let $\e=\|y-x\|-d(x,B_Y)$, for $y\in B_Y$. Then $y\in 
P_{B_Y}(x,\e)={P_{B_Y}(x)}_\e\cap B_Y$. Hence 
$d(y,P_{B_Y}(x))\leq\e=\|y-x\|-d(x,B_Y)$. The other inequality is obvious.

$(c)\Longrightarrow (a):$ Let $B[x,r_1]\cap B_Y\neq\es$ and $\|x-y\|<r_1+r_2$ 
for some $y\in B_Y$. Then $r_1=d+\de$ for some $\de\geq 0$, where $d=d(x,B_Y)$. 
If possible let $B[x,r_1]\cap B[y,r_2]\cap B_Y=\es$, that is 
$P_{B_Y}(x,\de)\cap B[y,r_2]=\es$. But then ${P_{B_Y}(x)}_\de\cap 
B[y,r_2]=\es$, that is $d(y,P_{B_Y}(x))>r_2+\de$. By $(c)$ $\|x-y\|-d>r_2+\de$ 
and finally $\|x-y\|>r_1+r_2$, a contradiction.
\end{proof}

We now show that the converse of Theorem~\ref{T10}$(a)$ is not true.

\begin{ex}
Consider the space $X=(\mb{R}^2,\|.\|_2)$ and let $Z=X\bigoplus_\iy \mb{R}$. Then $X$ is an M-ideal in $Z$ but for $x=((1,1),0)\in Z$, $\|x\|=\sqrt{2}$. Now for $y=((\frac{1}{\sqrt{2}},\frac{1}{\sqrt{2}}),1)\in B_Z$.
we have, $1=\|x-y\|<d(x,B_X)+d(y,P_{B_X}(x))=\sqrt{2}$ and hence from Theorem~\ref{T7} it follows that $B_X$ can not have $1\frac{1}{2}$ ball property in $Z$. 
\end{ex}

\begin{rem}\label{R15}
\bla
\item From the above characterizations it is clear that $1\frac{1}{2}$ ball property 
of $B_Y$ forces the subspace $Y$ to be strongly ball proximinal. 
\item From the example by Godefroy in \cite{La} it is clear that the closed unit ball of a Banach space not necessarily have $1\frac{1}{2}$ ball property.
\el
\end{rem}

Remark~\ref{R15}$(b)$ motivate us to investigate the class of Banach spaces and its subspaces whose closed unit balls are uniformly proximinal. The following examples are class of such spaces.

\begin{ex}\label{E16}
\bla
\item If $X$ has $3.2.I.P.$ then $B_X$ has $1\frac{1}{2}$ ball property in $X$, hence the closed unit ball of such a space is strongly proximinal. Hence for any real measure $\mu$, $L_1(\mu)$ or its isometric preduals have this property: Let $B[x,r]\cap B_X\neq\es$ and $\|x-z\|<r+s$ for some $z\in B_X$. The balls $B[x,r], B[z,s], B_X$ are pairwise intersecting and hence has non empty intersection.

\item Let $Y$ be a M-ideal in a $3.2.I.P$ space $X$ then $B_Y$ has 
$1\frac{1}{2}$-ball property in $X$:~
Let $B[x,r_1]\cap B_Y\neq\es$ and $\|x-y\|<r_1+r_2$ for some $y\in B_Y$. Hence 
we have $3$ balls $B[x,r_1], B[y,r_2], B_X$ in $X$ intersect pairwise. From the 
property of $3.2.I.P.$ we have $B[x,r_1]\cap B[y,r_2]\cap B_X\neq\es$. Now from 
\cite[Theorem~4.7]{JP} it follows $Y$ has strong $3$-ball property. Hence considering above 
$3$ balls once again one can have $B[x,r_1]\cap B[y,r_2]\cap B_X\cap Y\neq\es$ 
which in turn equivalent to $B[x,r_1]\cap B[y,r_2]\cap B_Y\neq\es$.
\el
\end{ex}

From the Definition~\ref{D3}, Theorem~\ref{T4} and the distance formulas proved in Theorem~\ref{P10}, \ref{T13}, we have,

\begin{cor}\label{C7}
Let $X$ be a separable Banach space. Then \tFAE.
\bla
\item $B_X$ has $1\frac{1}{2}$ ball property in $X$.
\item $B_{L_1(I,X)}$ has $1\frac{1}{2}$ ball property in $L_1(I,X)$.
\item $B_{L_\iy(I,X)}$ has $1\frac{1}{2}$ ball property in $L_\iy(I,X)$. 
\el
\end{cor}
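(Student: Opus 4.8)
The plan is to chain together three ingredients already established in the paper: the distance formulas of Theorem~\ref{P10} and Theorem~\ref{T13}, the characterization of the $1\frac{1}{2}$ ball property of $B_Y$ in Theorem~\ref{T7}(c), and the stability result Theorem~\ref{T4} (applied with $Y=X$, which is legitimate since $X$ is separable). The structure will be $(a)\Longleftrightarrow(b)$ and $(a)\Longleftrightarrow(c)$, both proved by the same argument, treating $p=1$ and $p=\iy$ uniformly wherever possible and splitting only when the distance formula forces it.

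First I would record the relevant distance formulas for the function space. Taking $Y=X$ in Theorem~\ref{P10} gives $d(f,B_{L_p(I,X)})=\|d(f(.),B_X)\|_p$ for $1\leq p\leq\iy$, and taking $Y=X$ in Theorem~\ref{T13} (for $p<\iy$; for $p=\iy$ one uses Remark~\ref{R2}-type reasoning as in the proof of Theorem~\ref{T4}) gives $d(f,P_{B_{L_p(I,X)}}(g))=\|d(f(.),P_{B_X}(g(.)))\|_p$. By Theorem~\ref{T7}, condition $(a)$ is equivalent to the pointwise identity $\|u-v\|=d(u,B_X)+d(v,P_{B_X}(u))$ for all $u\in X$, $v\in B_X$; and conditions $(b)$, $(c)$ are each equivalent (again via Theorem~\ref{T7}, now in the space $L_p(I,X)$) to the identity $\|f-g\|_p=d(f,B_{L_p(I,X)})+d(g,P_{B_{L_p(I,X)}}(f))$ for all $f\in L_p(I,X)$, $g\in B_{L_p(I,X)}$.

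Next, to prove $(a)\Longrightarrow(b)$ and $(a)\Longrightarrow(c)$: assuming the pointwise identity, I would apply it with $u=f(t)$, $v=g(t)$ to get $\|f(t)-g(t)\|=d(f(t),B_X)+d(g(t),P_{B_X}(f(t)))$ a.e., then integrate the $p$-th powers (for $p=1$) or take essential suprema (for $p=\iy$) and invoke the two distance formulas above, exactly as in the proof of Theorem~\ref{T4}; the reverse inequality $\|f-g\|_p\le d(f,B_{L_p(I,X)})+d(g,P_{B_{L_p(I,X)}}(f))$ is the trivial triangle-inequality direction. For the converses $(b)\Longrightarrow(a)$ and $(c)\Longrightarrow(a)$, I would use the standard device already employed in Theorems~\ref{T6}, \ref{T9}, \ref{T12}: given $x\in X$ and $v\in B_X$, take the constant functions $f\equiv x$, $g\equiv v$; then all three distances collapse ($d(f,B_{L_p(I,X)})=d(x,B_X)$, $d(g,P_{B_{L_p(I,X)}}(f))=d(v,P_{B_X}(x))$ by Remark~\ref{R1}(b) and Theorem~\ref{T13}), and the function-space identity reduces to the pointwise one, giving $(a)$.

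The step I expect to need the most care is the $p=\iy$ case of the distance formula $d(f,P_{B_{L_\iy(I,X)}}(g))=\|d(f(.),P_{B_X}(g(.)))\|_\iy$, since Theorem~\ref{T13} is stated only for $p<\iy$; as in the proof of Theorem~\ref{T4} one gets only the inequality $d(f,P_{B_{L_\iy(I,X)}}(g))\le\|d(f(.),P_{B_X}(g(.)))\|_\iy$ directly (via the set $Z$ of Remark~\ref{R2}), but that inequality is on the correct side for the direction $(a)\Rightarrow(c)$, and for the direction $(c)\Rightarrow(a)$ the constant-function reduction sidesteps it entirely. So the corollary should follow cleanly once these pieces are assembled; no genuinely new obstacle arises beyond bookkeeping the $p=1$ versus $p=\iy$ formulas.
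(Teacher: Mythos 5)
Your plan follows the route the paper itself gestures at (the characterization of Theorem~\ref{T7}$(c)$, the distance formulas of Theorems~\ref{P10} and \ref{T13} with $Y=X$, constant functions for the converses), but there is a genuine gap in the forward direction for $p=1$. Membership in $B_{L_1(I,X)}$ is the integral constraint $\int_I\|g(t)\|\,dm\leq 1$; it does not force $g(t)\in B_X$ a.e. So the step ``apply the pointwise identity with $u=f(t)$, $v=g(t)$ and integrate'' is illegitimate: Theorem~\ref{T7}$(c)$ only applies at points $v\in B_X$, and on the set where $\|g(t)\|>1$ the needed pointwise inequality $d(g(t),P_{B_X}(f(t)))\leq\|f(t)-g(t)\|-d(f(t),B_X)$ can fail outright (in $X=\mb{R}$, $x=3$, $y=5$ the left side is $4$ and the right side is $0$). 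No density argument repairs this, since $L_1(I,B_X)$ is far from dense in $B_{L_1(I,X)}$: in real $L_1$, $g=2\chi_{[0,1/2]}$ lies in $B_{L_1}$ but at distance $\frac12$ from $L_1(I,B_{\mb{R}})$. The same pointwise-versus-integral mismatch undermines the distance formulas you import: with $Y=X$, $p=1$ and $f=3\chi_{[0,1/2]}$ one has $d(f,B_{L_1(I,\mb{R})})=\frac12$ while $\|d(f(\cdot),B_{\mb{R}})\|_1=1=d(f,L_1(I,B_{\mb{R}}))$, so the identity $d(f,B_{L_1(I,X)})=\|d(f(\cdot),B_X)\|_1$ (and the formula of Theorem~\ref{T13} read for $B_{L_1(I,X)}$) cannot be used in the form your argument needs; those formulas really concern the strictly smaller set $L_1(I,B_X)$. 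Hence $(a)\Ra(b)$ is not established by the ``pointwise identity $+$ integrate'' scheme; some new device handling the global norm budget (for instance a variable-radii ball-intersection statement with a measurable allocation $c(t)\geq\|g(t)\|$, $\int_I c\,dm\leq 1$, followed by a selection via Theorem~\ref{T2}) would be required, and neither the proposal nor the quoted results supply it.

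The $p=\iy$ case is sound in substance because there the ball constraint is pointwise ($\|g(t)\|\leq 1$ a.e.), but two of your citations do not do the work you assign them. For $(a)\Ra(c)$, taking essential suprema of the pointwise identity does not yield $\|f-g\|_\iy\geq d(f,B_{L_\iy})+d(g,P_{B_{L_\iy}}(f))$, since the supremum of a sum is not the sum of suprema; argue instead directly from Definition~\ref{D3} by a measurable selection of $t\mapsto B[f(t),r_1]\cap B[g(t),r_2]\cap B_X$, exactly as in Theorem~\ref{T14}. For the converses, Remark~\ref{R1}$(b)$ and Theorem~\ref{T13} are stated only for $p<\iy$ (and Theorem~\ref{T13} presupposes strong ball proximinality of $X$, which you have not yet justified under $(b)$ or $(c)$); the inequality you actually need, $d(v,P_{B_X}(x))\leq d(g,P_{B_{L_p}}(f))$ for constant $f\equiv x$, $g\equiv v$, follows instead from the averaging map $h\mapsto\int_I h\,dm$ applied to $h\in P_{B_{L_p}}(f)$, as in the proofs of Theorems~\ref{T9} and \ref{T12}; with that replacement both converse directions are fine.
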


\bibliographystyle{amsplain}

\end{document}